\DeclareMathOperator{\fdeg}{fdeg}
\DeclareMathOperator{\tr}{tr}
\DeclareMathOperator{\ad}{ad}
\newcommand{\A}{{\mathcal A}}
\newcommand{\B}{{\mathcal B}}
\newcommand{\E}{{\mathcal E}}
\newcommand{\F}{{\mathcal F}}
\newcommand{\I}{{\mathcal I}}
\newcommand{\J}{{\mathcal J}}
\newcommand{\K}{{\mathcal K}}
\renewcommand{\L}{{\mathcal L}}
\renewcommand{\O}{{\mathcal O}}
\renewcommand{\P}{{\mathcal P}}
\newcommand{\Q}{{\mathcal Q}}
\newcommand{\C}{\ensuremath{\mathbb{C}}}
\newcommand{\p}{\partial}
\newcommand{\unit}{{\bf{1}}}
\newcommand{\Z}{\ensuremath{\mathbb{Z}}}
\newtheorem{lemma}{Lemma}[section]
\newtheorem{proposition}{Proposition}[section]
\newtheorem{theorem}{Theorem}[section]
\newtheorem{corollary}{Corollary}[section]
\begin{document}

\title[Heat kernel proof of index theorem]
{A heat kernel proof of the index theorem for deformation quantization}
\author[Alexander Karabegov]{Alexander Karabegov}
\address[Alexander Karabegov]{Department of Mathematics, Abilene
Christian University, ACU Box 28012, Abilene, TX 79699-8012}
\email{axk02d@acu.edu}

\subjclass[2010]{53D55, 19K56, 58A50, 35K08}
\keywords{deformation quantization, index theorem, supermanifold, heat kernel}
\date{January 18, 2016}

\maketitle

\begin{center}
{\it This paper is dedicated to my teacher\\
 Alexandre Aleksandrovich Kirillov}
\end{center}

\begin{abstract}
We give a heat kernel proof of the algebraic index theorem for deformation quantization with separation of variables on a pseudo-K\"ahler manifold. We use normalizations of the canonical trace density of a star product and of the characteristic classes involved in the index formula for which this formula contains no extra constant factors.
\end{abstract}

\tableofcontents

\section{Introduction}

Given a manifold $M$, denote by $C^\infty(M)((\nu))$ the space of formal Laurent series
\[
       f = \nu^r f_r + \nu^{r+1} f_{r+1} + \ldots,
\]
where $r \in \Z$ and $f_i \in C^\infty(M)$ for $i \geq r$. We call $f$ a formal function on~$M$.  Let $\pi$ be a Poisson bivector field on $M$. A formal deformation quantization on the Poisson manifold $(M, \pi)$ is an associative product~$\star$ on $C^\infty(M)((\nu))$ given by the formula
\begin{equation}\label{E:star}
              f \star g = fg + \sum_{r=1}^\infty \nu^r C_r(f,g).
\end{equation}
In (\ref{E:star})  $C_r$ are bidifferential operators on $M$ and
\[
             C_1(f,g) - C_1(g,f) = i\{f,g\},
\]
where $\{f,g\} = \pi(df \wedge dg)$ is the Poisson bracket corresponding to $\pi$. We assume that a star product is normalized, i.e., the unit constant function $\unit$  is the identity, $f \star \unit = \unit \star f = f$ for any $f$. Given formal functions $f,g$ on $M$, we denote by $L^\star_f$ the operator of left multiplication by $f$ and by $R^\star_g$ the operator of right multiplication by $g$ with respect to the star product $\star$, so that $f \star g = L^\star_f g = R^\star_g f$. We have $[L^\star_f,R^\star_g]=0$ for any $f,g$.

A star product can be restricted to any open set $U \subset M$. We denote by $C_0^\infty(U)((\nu))$ the space of formal functions compactly supported on~$U$. For  $f = \nu^r f_r + \ldots \in C_0^\infty(U)((\nu))$ each function $f_i$ has compact support in $U$, but we do not requre that all $f_i, i \geq r$, have a common compact support in $U$.

Two star products $\star_1$ and $\star_2$ on $(M, \pi)$ are equivalent if there exists a formal differential operator
\[
                   T = 1 + \nu T_1 + \nu^2 T_2 + \ldots
\]
on $M$ such that
\[
     f \star_2 g = T^{-1} (Tf \star_1 Tg).
\]
The problem of existence and classification up to equivalence of star products on Poisson manifolds was stated in \cite{BFFLS} and settled by Kontsevich in \cite{K}, who proved that star products exist on an arbitrary Poisson manifold and their equivalence classes are parametrized by the formal deformations of the Poisson structure.

A  symplectic manifold $(M, \omega_{-1})$ is equipped with a nondegenerate Poisson bivector field $\pi$ inverse to $\omega_{-1}$. Fedosov gave in \cite{F1}  and \cite{F2} a simple geometric construction of star products in each equivalence class on an arbitrary symplectic manifold $(M, \omega_{-1})$. The equivalence classes of star products on $(M, \omega_{-1})$ are bijectively parametrized by the formal cohomology classes from
\[
         -\frac{i}{\nu}\left[\omega_{-1}\right] + H^2(M)[[\nu]],
\]
as shown in \cite{D}, \cite{NT2}, \cite{F2}, \cite{BCG}.

Let $\star$ be a star product on a connected symplectic manifold $(M, \omega_{-1})$ of dimension $2m$. There exists a formal trace density for the product~$\star$ which is globally defined on $M$ and is unique up to a factor from $\C((\nu))$. Fedosov introduced in \cite{F1} a canonically normalized formal trace density for the product $\star$ using local isomorphisms between that product and the Moyal-Weyl star product. Then in \cite{F2} he used this trace density to state and prove the algebraic index theorem for the star product $\star$.

In this paper we consider a canonical formal trace density $\mu_\star$ of the product $\star$ on $M$ which differs from Fedosov's trace density by a factor from $\C$. According to \cite{LMP2}, the normalization of $\mu_\star$ can be described intrinsically as follows.  On each contractible open subset $U \subset M$ there exists a local $\nu$-derivation of the product $\star$ of the form
\[
                    \delta_\star = \frac{d}{d\nu} + A,            
\]
where $A$ is a formal differential operator on $U$ (see \cite{GR99}). It is unique up to an inner derivation, i.e., all such $\nu$-derivations on $U$ are of the form $\delta_\star + [f, \cdot]_\star$, where $f \in C^\infty(U)((\nu))$ and $[\cdot,\cdot]_\star$ is the commutator with respect to the product~$\star$. The canonical trace density $\mu_\star$ satisfies the equation
\begin{equation}\label{E:nuderiv}
       \frac{d}{d\nu} \int_U f \mu_\star = \int_U \delta_\star(f) \mu_\star
\end{equation}
for any formal function $f$ compactly supported on $U$.  Equation ~(\ref{E:nuderiv}) determines $\mu_\star$ on $U$ up to a factor from $\C$ which can be fixed by normalizing the leading term of $\mu_\star$. In this paper we require that this leading term be
\begin{equation}\label{E:lead}
     \frac{1}{m!} \left(-\frac{i}{\nu} \omega_{-1}\right)^m.
\end{equation}

If $M$ is compact,  the total volume of the canonical trace density $\mu_\star$ is given by a topological formula analogous to the Atiyah-Singer formula for the index of an elliptic operator,
\begin{equation}\label{E:index}
                   \int_M \mu_\star =  \int_M e^{\theta_\star} \hat {A}(M),
\end{equation}
where $\theta_\star$ is the formal cohomology class that parametrizes the equivalence class of the star product $\star$ and $\hat {A}(M)$ is the $\hat A$-genus of the manifold $M$. The class $\hat {A}(M)$ has a de Rham representative
\[
     {\det} ^{\frac{1}{2}} \frac{R_{TM}/2}{\sinh (R_{TM}/2)},
\]
where $R_{TM}$ is the curvature of an arbitrary connection on $TM$. This statement is called the algebraic index theorem for deformation quantization and the total volume of the canonical trace density $\mu_\star$ is called the algebraic index of the star product $\star$. The algebraic index theorem has several different conceptual proofs. Fedosov's proof is based upon the methods of Atiyah and Singer. Nest and Tsygan proved in \cite{NT1}  the algebraic index theorem for deformation quantization using cyclic homology and the local Riemann-Roch theorem by Feigin and Tsygan given in \cite{FT}. Various generalizations of the algebraic index theorem were obtained in \cite{NT2}, \cite{FFS}, \cite{DR}, \cite{PPT1}, \cite{PPT2}.

Getzler gave in \cite{G} a proof of the Atiah-Singer index theorem for a Dirac operator based upon the ideas of Witten and Alvarez-Gaum\' e (see \cite{AG}). In that proof he used symbols of pseudodifferential operators on a supermanifold. Berline, Getzler, and Vergne wrote later a book~\cite{BGV} on heat kernel proofs of index theorems for Dirac operators.

In this paper we prove the algebraic index theorem for a star product with separation of variables on a pseudo-K\"ahler manifold following Getzler's approach. Many global geometric objects used in our proof are described locally on holomorphic coordinate charts by coordinate-independent constructions. The proofs of a number of statements are based on the interplay between pointwise products and star products with separation of variables. We use normalizations of the canonical trace density and of the characteristic classes involved in the index formula~(\ref{E:index}) for which this formula contains no extra constant factors.

\medskip

This paper is dedicated to my teacher Alexandre Aleksandrovich Kirillov on the occasion of his 81st birthday.

\section{Star products with separation of variables}
Let $M$ be a complex manifold of complex dimension $m$ equipped with a Poisson bivector field $\pi$. A star product $\star$ on $(M, \pi)$ has the property of separation of variables of the anti-Wick type if
\[
      a \star f = af \mbox{ and } f \star b = fb
\]
for any locally defined holomorphic function $a$, antiholomorphic function $b$, and arbitrary function $f$, which means that
\[
                    L^\star_a = a \mbox{ and } R^\star_b = b
\]
are pointwise multiplication operators. Equivalently, the operators $C_r$ in (\ref{E:star}) act on the first argument by antiholomorphic partial derivatives and on the second argument by holomorphic ones. If there exists a star product with separation of variables on $(M,\pi)$, then the Poisson bivector~$\pi$ is of type (1,1) with respect to the complex structure. In local coordinates $\pi$ is expressed as follows,
\[
      \pi = i g^{\bar lk} \frac{\p}{\p z^k} \wedge \frac{\p}{\p \bar z^l},
\]
where $g^{\bar lk}$ is the Poisson tensor corresponding to $\pi$ and the Einstein summation over repeated upper and lower indices is used.

We say that a formal differential operator $A = A_0 + \nu A_1 + \ldots$ on a manifold $M$ is natural if $A_r$ is a differential operator on $M$ of order not greater than $r$ for all $r \geq 0$. A star product (\ref{E:star}) is natural in the sense of~\cite{GR} if the bidifferential operator $C_r$ in (\ref{E:star}) is of order not greater than ~$r$ in each argument for all $r \geq 1$ or,  equivalently, if for any $f \in C^\infty(M)$ the operators $L^\star_f$ and $R^\star_f$ on $M$ are natural. If a star product $\star$ is natural and $f = \nu^p f_p + \nu^{p+1} f_{p+1} +\ldots \in C^\infty(M)((\nu))$ (i.e., the $\nu$-filtration degree of $f$ is at least $p$), then the operators $\nu^{-p} L^\star_f$ and $\nu^{-p} R^\star_f$ are natural. It was proved in \cite{CMP3} that any star product with separation of variables on a complex manifold~$M$ is natural.

Given a star product with separation of variables $\star$ on $(M,\pi)$, there exists a unique globally defined formal differential operator
\[
               \I_\star = 1 + \nu \I_1 + \nu^2 \I_2 + \ldots
\]
on $M$ such that for any locally defined holomorphic function $a$ and antiholomorphic function $b$,
\[
         \I_\star(ba) = b \star a.
\]
In particular, $\I_\star a = a$ and $\I_\star b = b$. It is called the formal Berezin transform associated with the star product $\star$. A star product with separation of variables can be recovered from its formal Berezin transform. An equivalent star product $\star'$ on $(M, \pi)$ given by the formula
\[
          f \star' g = \I_\star^{-1} (\I_\star f \star \I_\star g)
\]
is a star product with separation of variables of the Wick type, so that
\[
                b \star' f = bf \mbox{ and } f \star' a = fa,
\]
where $a$ and $b$ are as above.
\begin{lemma}
For any local holomorphic function $a$ and local antiholomorphic function $b$ we have
\begin{equation}\label{E:bertran}
  \I_\star(f a)  = \I_\star(f ) \star a \mbox{ and } \I_\star(bf) = b \star \I_\star(f ).
\end{equation}
\end{lemma}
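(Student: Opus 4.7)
The plan is to deduce both identities directly from the fact that the equivalent star product $\star'$ defined by
\[
   f \star' g = \I_\star^{-1}(\I_\star f \star \I_\star g)
\]
has separation of variables of the Wick type, namely $b \star' f = bf$ and $f \star' a = fa$ for local antiholomorphic $b$ and holomorphic $a$. This property is invoked as already established in the preceding discussion of the Berezin transform, so no additional work is needed to justify it.

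For the first identity, I would start from $f \star' a = fa$, which by definition of $\star'$ reads $\I_\star^{-1}(\I_\star(f) \star \I_\star(a)) = fa$. Using the normalization $\I_\star(a) = a$ for a local holomorphic function $a$, this becomes $\I_\star^{-1}(\I_\star(f) \star a) = fa$. Applying $\I_\star$ to both sides yields $\I_\star(f) \star a = \I_\star(fa)$, which is the first equation in (\ref{E:bertran}).

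The second identity is obtained by the symmetric argument: starting from $b \star' f = bf$, unwinding the definition of $\star'$, and using $\I_\star(b) = b$ for a local antiholomorphic function $b$, one gets $\I_\star^{-1}(b \star \I_\star(f)) = bf$, and then applying $\I_\star$ gives $b \star \I_\star(f) = \I_\star(bf)$.

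There is no real obstacle: the entire content of the lemma is packaged into the Wick-type separation of variables for $\star'$ together with the fact that $\I_\star$ fixes holomorphic and antiholomorphic functions. The only thing to notice is that one must apply $\I_\star$ (not just multiply by something) on both sides after simplifying via $\I_\star(a) = a$ or $\I_\star(b) = b$; since $\I_\star$ is an invertible formal differential operator, this manipulation is unambiguous.
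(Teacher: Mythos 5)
Your proof is correct and follows essentially the same route as the paper: both arguments rest on the Wick-type separation of variables for $\star'$ together with $\I_\star(a)=a$ (resp.\ $\I_\star(b)=b$), with the paper merely writing the chain of equalities in the opposite order, starting from $\I_\star(fa)=\I_\star(f\star' a)=\I_\star(f)\star\I_\star(a)=\I_\star(f)\star a$.
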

\begin{proof}
\[
     \I_\star(f a) = \I_\star(f \star' a) = \I_\star(f) \star \I_\star(a) = \I_\star(f ) \star a.
\]
The second formula can be proved similarly.
\end{proof}
The star product $\tilde\star$ opposite to $\star'$,
\[
     f\,  \tilde \star \,  g = g \star' f,
\]
is a star product with separation of variables of the anti-Wick type on~$(M, -\pi)$. The star product~$\tilde\star$ is called dual to $\star$. Its formal Berezin transform is
$\I_{\tilde\star} = \I_\star^{-1}.$

In this paper we assume that a star product with separation of variables is of the anti-Wick type unless otherwise specified.

Let $\star$ be a star product with separation of variables on $(M, \pi)$. The operator $C_1$ in (\ref{E:star}) written in coordinates on a local chart $U \subset M$ is of the form
\[
             C_1(f,g) = g^{\bar lk} \frac{\p f}{\p \bar z^l}\frac{\p g}{\p z^k},
\]
where $g^{\bar lk}$ is the Poisson tensor corresponding to $\pi$. If $\pi$ is nondegenerate, it corresponds to a pseudo-K\"ahler form $\omega_{-1}$ on $M$. Namely, the matrix $g_{k\bar l}$ inverse to $g^{\bar lk}$  is a pseudo-K\"ahler metric tensor such that
\[
    \omega_{-1} =     i g_{k\bar l} dz^k \wedge d\bar z^l
\]
on $U$. If $\Phi_{-1}$ is a potential of $\omega_{-1}$ on $U$, then
\begin{equation}\label{E:metric}
                g_{k\bar l} = \frac{\p^2 \Phi_{-1}}{\p z^k \p \bar z^l}.
\end{equation}
We will omit the bars over the antiholomorphic indices  in the tensors $g_{k\bar l}$ and $g^{\bar lk}$. In this paper we will use the notation
\[
      g_{k_1 \ldots k_p \bar l_1 \ldots \bar l_q} = \frac{\p^{p+q} \Phi_{-1}}{\p z^{k_1} \ldots \p z^{k_p} \p \bar z^{l_1} \ldots \bar \p z^{l_q}}
\]
for $p, q \geq 1$. 

It was proved in \cite{CMP1} and \cite{BW} that star products with separation of variables exist on an arbitrary pseudo-K\"ahler manifold $(M,\omega_{-1})$. Moreover, as shown in \cite{CMP1}, the star products with separation of variables of the anti-Wick type on $(M,\omega_{-1})$ bijectively correspond to the closed formal (1,1)-forms
\begin{equation}\label{E:class}
      \omega = \nu^{-1}\omega_{-1} + \omega_0 + \nu \omega_1 + \ldots
\end{equation}
on $M$. Let $\star$ be a star product with separation of variables on $(M, \omega_{-1})$ with classifying form (\ref{E:class}). On a contractible coordinate chart $U \subset M$ every closed form $\omega_r$ has a potential $\Phi_r$, so that $\omega_r = i \p \bar \p \Phi_r$. Then
\[
     \Phi : = \nu^{-1}\Phi_{-1} + \Phi_0 + \nu \Phi_1 + \ldots
\]
is a formal potential of $\omega$ on $U$. The star product  $\star$ is uniquely determined by the property that
\begin{equation}\label{E:classleft}
           L^\star_{\frac{\p\Phi}{\p z^k}} = \frac{\p\Phi}{\p z^k} + \frac{\p}{\p z^k} \mbox{ and } R^\star_{\frac{\p\Phi}{\p \bar z^l}} = \frac{\p\Phi}{\p \bar z^l} + \frac{\p}{\p \bar z^l}
\end{equation}
for $1 \leq k,l \leq m$.
 Given $f \in C^\infty(U)((\nu))$, there exists a unique formal differential operator $A$ on $U$ which commutes with the operators $R^\star_{\bar z^l} = \bar z^l$ and $R^\star_{\p\Phi / \p \bar z^l}$ for $1 \leq l \leq m$ and satisfies the condition $A\unit = f$. It coincides with the operator~$L^\star_f, \ A = L^\star _f$. This property allows to reconstruct the star product~ $\star$ from its classifying form $\omega$.

The Ricci form $\rho$ on $(M,\omega_{-1})$ is given in local coordinates by the formula
\[
         \rho = - i \p \bar \p \det (g_{kl}).
\]
The canonical class $\varepsilon_M$ of the complex manifold $M$ has a de Rham representative $- \rho, \ \varepsilon_M = - [\rho]$.

The formal cohomology class $\theta_\star$ that parametrizes the equivalence class of a star product with separation of variables $\star$ on $M$ with classifying form $\omega$ is given by the formula
\begin{equation}\label{E:class1}
    \theta_\star = -i \left([\omega] - \frac{1}{2}\varepsilon_M\right) = -i \left([\omega] + \frac{1}{2}[\rho]\right),
\end{equation}
where $[\omega]$ is the de Rham class of $\omega$. Formula (\ref{E:class1}) was given in \cite{LMP1}, but, unfortunately, contained a wrong sign.

Let $\tilde\omega$ be the classifying form of the star product with separation of variables $\tilde\star$ on $(M, -\omega_{-1})$ dual to the product $\star$. Then 
\[
       \tilde \omega = -\nu^{-1}\omega_{-1} + \tilde\omega_0 + \nu \tilde\omega_1 + \ldots.
\]

The following construction of a local non-normalized trace density for a star product with separation of variables $\star$ on $(M,\omega_{-1})$ was introduced in~ \cite{LMP2}. Given a contractible coordinate chart $U \subset M$ and a potential $\Phi = \nu^{-1} \Phi_{-1} + \Phi_0 + \ldots$ of the classifying form $\omega$ of the product~$\star$ on~$U$, there exists a potential~$\Psi = -\nu^{-1} \Phi_{-1} + \Psi_0 + \nu \Psi_1 + \ldots$ of the dual form $\tilde\omega$ on~$U$ satisfying the equations

\begin{equation}\label{E:altdens}
      \I_\star \left(\frac{\p \Psi}{\p z^k}\right) + \frac{\p \Phi}{\p z^k} = 0 \mbox{ and }  \I_\star \left(\frac{\p \Psi}{\p \bar z^l}\right) + \frac{\p \Phi}{\p \bar z^l} = 0.
\end{equation}
The potential $\Psi$ is determined by equations (\ref{E:altdens}) up to an additive formal constant.  As shown in \cite{LMP2},
\[
                      e^{\Phi + \Psi} dz d\bar z,
\] 
where $dz d\bar z$ is a Lebesgue measure on $U$, is a trace density for the product $\star$ on $U$. In order to canonically normalize this trace density, one can use the following explicit local $\nu$-derivation of the product $\star$ on $U$ introduced in \cite{LMP1},
\begin{equation}\label{E:separdelta}
             \delta_\star = \frac{d}{d\nu} + \frac{d\Phi}{d \nu} - R^\star_{\frac{d\Phi}{d \nu}}.
\end{equation}

\section{Deformation quantization on a super-K\"ahler manifold}

In this section we recall a construction of a star product with separation of variables on a split supermanifold from \cite{JGPsubm}. 

Let $E$ be a holomorphic vector bundle of rank $d$ over a pseudo-K\"ahler manifold $(M, \omega_{-1})$ equipped with a possibly indefinite sesquilinear fiber metric $h_{\alpha\bar\beta}$ and let $\Pi E$ be the corresponding split supermanifold. We identify the functions on $\Pi E$ with the sections of $\wedge \left(E^\ast \oplus \bar E^\ast \right)$, where $E^\ast$ and $\bar E$ are the dual and the conjugate bundles of $E$, respectively.

We say that a formal function $f = \nu^r f_r + \ldots$ on $\Pi E$ is compactly supported {\it over} $M$ if for each $j \geq r$ the coefficient $f_j$ is a compactly supported section of $\wedge \left(E^\ast \oplus \bar E^\ast \right)$, or, equivalently, there exists a compact $K_j \subset M$ such that the restriction of the function $f_j$ to $\Pi E|_{M \setminus K_j}$ vanishes.

Consider a holomorphic trivialization $E|_U \cong U \times \C^d$ over an open set $U \subset M$ and denote by $\theta^\alpha, \bar \theta^\beta, 1 \leq \alpha,\beta \leq d,$ the odd fiber coordinates on $\Pi E|_U \cong U \times \C^{0|d}$. A function $f$ on $\Pi E|_U$ can be written as
\begin{equation}\label{E:funcf}
    f =  \sum_{0 \leq p, q \leq d} f_{\alpha_1 \ldots \alpha_p \bar\beta_1 \ldots \bar\beta_q} \theta^{\alpha_1} \ldots \theta^{\alpha_p} \bar \theta^{\beta_1} \ldots \bar \theta^{\beta_q},
\end{equation}
where the coefficients $f_{\alpha_1 \ldots \alpha_p \bar\beta_1 \ldots \bar\beta_q} \in C^\infty(U)$ are separately antisymmetric in the indices $\alpha_i$ and $\beta_j$. A function (\ref{E:funcf}) on $\Pi E|_U$ is holomorphic if its coefficients  are holomorphic and satisfy $f_{\alpha_1 \ldots \alpha_p \bar\beta_1 \ldots \bar\beta_q}=0$ for $q >0$. It is antiholomorphic if its coefficients are antiholomorphic and satisfy  $f_{\alpha_1 \ldots \alpha_p \bar\beta_1 \ldots \bar\beta_q}=0$ for $p >0$.

The fiber metric $h_{\alpha\bar\beta}$ on $E$ determines a global even nilpotent function $H = \nu^{-1} H_{-1}$ on $\Pi E$ such that locally
\[
                        H_{-1} = h_{\alpha\bar\beta} \theta^\alpha \bar \theta^\beta.
\]
Let $\star$ be a star product with separation of variables on $(M, \omega_{-1})$ with classifying form $\omega$. It was shown in \cite{JGPsubm} that the star product $\star$ and the function $H$ determine a unique global star product with separation of variables $\ast$ on $\Pi E$ which is $\Z_2$-graded with respect to the standard parity of the functions on $\Pi E$ and satisfies the following property. Let $U \subset M$ be any contractible coordinate chart, $\Pi E|_U \cong U \times \C^{0|d}$ be a trivialization, $\Phi = \nu^{-1} \Phi_{-1} + \Phi_0 + \ldots$ be a potential of the form $\omega$ on~$U$ identified with its lift to $\Pi E|_U$, and 
\begin{equation}\label{E:hmin}
     X:= \Phi + H = \nu^{-1}(\Phi_{-1} +H_{-1}) + \Phi_0 + \nu \Phi_1 + \ldots
\end{equation}
be an even superpotential on $\Pi E|_U$. Then
\begin{eqnarray*}
        L_{\frac{\p X}{\p z^k}} = \frac{\p X}{\p z^k} + \frac{\p}{\p z^k},  L_{\frac{\p X}{\p \theta^\alpha}} = \frac{\p X}{\p \theta^\alpha} + \frac{\p}{\p \theta^\alpha},\hskip 2cm \\
 R_{\frac{\p X}{\p \bar z^l}} = \frac{\p X}{\p  \bar z^l} + \frac{\p}{\p  \bar z^l},  \mbox{ and } R_{\frac{\p X}{\p \bar \theta^\beta}} = \frac{\p X}{\p \bar \theta^\beta} + \frac{\p}{\p \bar \theta^\beta}.\nonumber
\end{eqnarray*}
Here we assume that the fiberwise Grassmann multiplication operators and partial derivatives with respect to the odd variables $\theta, \bar\theta$ act from the left, $L_f$ is the left $\ast$-multiplication operator by $f$ so that $L_f g = f \ast g$, and $R_f$ is the graded right $\ast$-multiplication operator by $f$, so that if $f$ and $g$ are homogeneous functions  on $\Pi E$, then
\begin{equation}\label{E:grr}
             R_f g = (-1)^{|f||g|} g \ast f.    
\end{equation}
In particular, $L_f$ supercommutes with $R_g$ for any $f,g$. The star product ~$\ast$ on $\Pi E|_U$ is determined by the potential $X$. Given a formal function $f \in C^\infty(\Pi E|_U)((\nu))$, one can describe the operator $L_f$ as follows. There exists a unique  formal differential operator $A$ on $\Pi E|_U$ which supercommutes with the operators 
\[
     R_{\bar z^l} = \bar z^l, R_{\bar\theta^\beta} = \bar\theta^\beta, R_{\frac{\p X}{\p\bar z^l}} = \frac{\p X}{\p\bar z^l} + \frac{\p}{\p\bar z^l}, \mbox{ and } R_{\frac{\p X}{\p\bar \theta^\beta}} = \frac{\p X}{\p\bar \theta^\beta} + \frac{\p}{\p\bar \theta^\beta}
\]
and is such that $A\unit = f$. It coincides with the operator $L_f, A = L_f$.

Denote by $\I$ the formal Berezin transform for the product $\ast$. It is a formal differential operator globally defined on $\Pi E$ and such that
\[
              \I(ba) = b \ast a
\] 
for any local holomorphic function $a$ and antiholomorphic function $b$ on $\Pi E$. In particular, $\I a = a$ and $\I b = b$. One can prove formulas analogous to (\ref{E:bertran}) for the operator $\I$. For any function $f$,
\begin{equation}\label{E:sbertran}
         \I(f a)  = \I(f ) \ast a \mbox{ and } \I(bf) = b \ast \I(f ).
\end{equation}

It was shown in \cite{JGPsubm} that the star product $\ast$ has a supertrace given by a canonically normalized formal supertrace density globally defined on $\Pi E$. 

A local non-normalized supertrace density for the product $\ast$ can be obtained as follows. Given a contractible coordinate chart $U \subset M$ and a superpotential (\ref{E:hmin}) which determines the star product $\ast$ on $\Pi E|_U$, there exists an even superpotential
\[
\tilde X = -\nu^{-1}(\Phi_{-1} + H_{-1}) + \tilde X_0 + \nu \tilde X_1 + \ldots 
\]
on $\Pi E|_U$ satisfying the equations
\begin{eqnarray*}
      \I \left(\frac{\p \tilde X}{\p z^k}\right) + \frac{\p X}{\p z^k} = 0,\  \I \left(\frac{\p\tilde X}{\p \theta^\alpha}\right) + \frac{\p X}{\p \theta^\alpha} = 0,\\
  \I \left(\frac{\p \tilde X}{\p \bar z^l}\right) + \frac{\p X}{\p \bar z^l} = 0, \mbox{ and }  \I \left(\frac{\p \tilde X}{\p \bar \theta^\beta}\right) + \frac{\p X}{\p \bar \theta^\beta} = 0.
\end{eqnarray*}
The formula
\begin{equation}\label{E:strdens}
                    \mathrm{e}^{X + \tilde X} dz d\bar z d\theta d\bar\theta,
\end{equation}
where $dz d\bar z$ is a Lebesgue measure on $U$ and $d\theta d\bar\theta$ is a Berezin density (coordinate volume form) on $\C^{0|d}$, gives a supertrace density for the star product $\ast$ on $\Pi E|_U \cong U \times \C^{0|d}$. It is determined up to a multiplicative formal constant.

\section{A star product on $TM \oplus \Pi TM$}\label{S:tmpitm}

In this section we fix a star product with separation of variables $\star$ with classifying form $\omega$ on a pseudo-K\"ahler manifold $(M, \omega_{-1})$ of complex dimension $m$. We recall a construction from \cite{CMP4} of a star product with separation of variables $\bullet$ on the tangent bundle $TM$ obtained from the product $\star$. We use the product $\bullet$ to construct a star product $\ast$ on the supermanifold $TM \oplus \Pi TM$ which will be the main framework for the proof of the algebraic index theorem for the star product $\star$.

The tangent bundle $TM$ can be identified with the cotangent bundle $T^\ast M$ via the pseudo-K\"ahler metric on $M$. It was shown in \cite{CMP4} that the canonical symplectic form on $T^\ast M$ transferred to $TM$ via this identification is a global pseudo-K\"ahler form $\Xi_{-1}$ on $TM$. Let $U \subset M$ be a contractible coordinate chart with coordinates $z^k, \bar z^l, 1 \leq k,l \leq m,$ and $\Phi_{-1}$ be a potential of $\omega_{-1}$ on $U$. Denote by $\eta^k, \bar\eta^l$ the corresponding fiber coordinates on $TU$. Then
\[
               \frac{\p \Phi_{-1}}{\p z^k} \eta^k +  \frac{\p \Phi_{-1}}{\p \bar z^l} \bar\eta^l
\]
is a potential of $\Xi_{-1}$ on $TU$. Let $\pi_{TM}:TM \to M$ be the natural projection. It was shown in \cite{CMP4} that
\[
              \Omega_{-1} :=    \pi_{TM}^\ast \omega_{-1} + \Xi_{-1}
\]
is also a global pseudo-K\"ahler form on $TM$. We denote by $\bullet$  the star product with separation of variables on the pseudo-K\"ahler manifold $(TM,\Omega_{-1})$ with the classifying form
\[
             \Omega :=    \pi_{TM}^\ast \omega + \nu^{-1}\Xi_{-1}.
\]
If $\Phi$ is a potential of $\omega$ on $U$, then
\[
             \Phi + \nu^{-1}\left(\frac{\p \Phi_{-1}}{\p z^k} \eta^k +  \frac{\p \Phi_{-1}}{\p \bar z^l} \bar\eta^l\right)
\]
is a potential of the form $\Omega$ on $TU$.

Denote by $E$ the holomorphic vector bundle over $TM$ which is the pullback of the holomorphic tangent bundle $T^{(1,0)}M$ by the natural projection $\pi_{TM}$,
\[
                         E := \pi^\ast_{TM} \left(T^{(1,0)}M\right).
\]
We  equip $E$ with the fiber metric induced by the pseudo-K\"ahler metric $g_{kl}$ on $M$. The split supermanifold $\Pi E$ can be identified with the total space of the bundle $TM \oplus \Pi TM \to M$. Let $U \subset M$ be a coordinate chart and
\[
              TU \oplus \Pi TU \cong U \times \C^{m|m}
\]
be the corresponding trivialization. We denote as above  the even fiber coordinates by $\eta^k, \bar \eta^l$ and the odd ones by $\theta^k, \bar\theta^l$.  

Let $\psi = \nu^{-1} \psi_{-1}$ be a global even nilpotent function on $TM \oplus \Pi TM$ such that in local coordinates
\begin{equation}\label{E:psimin}
        \psi_{-1} = \theta^k g_{kl} \bar\theta^l. 
\end{equation}
As shown in \cite{JGPsubm}, there is a unique star product with separation of variables $\ast$ on $TM \oplus \Pi TM \cong \Pi E$ obtained from the product ~$\bullet$ on $TM$ and the function~ $\psi$ such that for any contractible coordinate chart $U \subset M$ the product $\ast$ is determined on $TU \oplus \Pi TU$ by the even superpotential
\begin{equation}\label{E:superpot}
        X = \Phi + \nu^{-1}\left(\frac{\p \Phi_{-1}}{\p z^p}\eta^p + \frac{\p \Phi_{-1}}{\p \bar z^l}\bar \eta^l + g_{kl} \theta^k \bar \theta^l \right).
\end{equation}
We denote by $L_f$ and $R_f$ the left and the graded right $\ast$-multiplication operators by a function $f$, respectively.  The following formulas hold on $TU \oplus \Pi TU$,
\begin{eqnarray}\label{E:superlr}
        L_{\frac{\p X}{\p z^k}} = \frac{\p X}{\p z^k} + \frac{\p}{\p z^k}, R_{\frac{\p X}{\p \bar z^l}} = \frac{\p X}{\p  \bar z^l} + \frac{\p}{\p  \bar z^l},\nonumber \\
L_{\frac{\p X}{\p \eta^k}} = \frac{\p X}{\p \eta^k} + \frac{\p}{\p \eta^k}, R_{\frac{\p X}{\p \bar \eta^l}} = \frac{\p X}{\p  \bar \eta^l} + \frac{\p}{\p  \bar \eta^l},\\
L_{\frac{\p X}{\p \theta^k}} = \frac{\p X}{\p \theta^k} + \frac{\p}{\p \theta^k},
  \mbox{ and } R_{\frac{\p X}{\p \bar \theta^l}} = \frac{\p X}{\p \bar \theta^l} + \frac{\p}{\p \bar \theta^l}.\nonumber
\end{eqnarray}

We introduce two families of operators on $TU \oplus \Pi TU$,
\[
      D^k = g^{qk} \frac{\p}{\p \bar z^q} \mbox{ and } \bar D^l = g^{lp} \frac{\p}{\p z^p}, \ 1 \leq k,l \leq m.
\]
It is known that $[D^k, D^p]=0$ and $[\bar D^l, \bar D^q] = 0$ for all $k,l,p,q$.
\begin{proposition}\label{P:superpot}
Given a formal function $f \in C^\infty(U)((\nu))$ identified with its lift to $TU \oplus \Pi TU$, the following formulas hold,
\begin{equation}\label{E:lbullf}
       L_f = \sum_{r = 0}^\infty \frac{\nu^r}{r!} \left(D^{k_1} \ldots D^{k_r} f\right) \frac{\p^r}{\p \eta^{k_1} \ldots \p \eta^{k_r}}
\end{equation}
and
\begin{equation}\label{E:rbullf}
R_f = \sum_{r = 0}^\infty \frac{\nu^r}{r!} \left(\bar D^{l_1} \ldots \bar D^{l_r} f\right) \frac{\p^r}{\p \bar\eta^{l_1} \ldots \p \bar\eta^{l_r}}.
\end{equation}
\end{proposition}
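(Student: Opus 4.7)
The strategy is to show that the operator $A$ defined by the right-hand side of (\ref{E:lbullf}) satisfies the uniqueness characterization of $L_f$ established in Section~3, applied to $\Pi E = TM \oplus \Pi TM$. Since the base of $\Pi E$ here is $TM$, with ``antiholomorphic'' coordinates $\bar z^l$ and $\bar\eta^l$ and antiholomorphic fiber odd coordinates $\bar\theta^l$, it suffices to check that $A\unit = f$ and that $A$ supercommutes with the six operators $R_{\bar z^l}, R_{\bar\eta^l}, R_{\bar\theta^l}$ and $R_{\p X/\p\bar z^l}, R_{\p X/\p\bar\eta^l}, R_{\p X/\p\bar\theta^l}$ given by (\ref{E:superlr}). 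The identity $A\unit = f$ is immediate because the $r \geq 1$ terms of $A$ each carry a $\p/\p\eta$ factor that annihilates $\unit$.

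Five of the six supercommutations are automatic. The operator $A$ is even, is built from multiplications by functions of $(z,\bar z)$ and from even derivatives $\p/\p\eta^k$, and hence commutes with multiplications by $\bar z^l, \bar\eta^l, \bar\theta^l$ and with $\p/\p\bar\eta^l, \p/\p\bar\theta^l$. Differentiating (\ref{E:superpot}) gives $\p X/\p\bar\eta^l = \nu^{-1}\p\Phi_{-1}/\p\bar z^l$ and $\p X/\p\bar\theta^l = -\nu^{-1}g_{kl}\theta^k$, both independent of $\eta$, so $A$ commutes with these multiplications too. Hence $A$ supercommutes with $R_{\p X/\p\bar\eta^l}$ and $R_{\p X/\p\bar\theta^l}$.

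The core of the argument is $[A, R_{\p X/\p\bar z^l}] = 0$. Writing $R_{\p X/\p\bar z^l} = M_{\p X/\p\bar z^l} + \p/\p\bar z^l$ and expanding $\p X/\p\bar z^l$ via (\ref{E:superpot}), only the term $\nu^{-1}g_{p\bar l}\eta^p$ depends on $\eta$, so a Leibniz computation on the $\eta$-derivatives combined with the symmetry of $D^{k_1}\cdots D^{k_r}$ in its indices (from $[D^k,D^p]=0$) and the relabeling that replaces $\sum_{i=1}^r$ by $r$ times a representative term yields
\[
[A, M_{\nu^{-1}g_{p\bar l}\eta^p}] = \sum_{s\geq 0}\frac{\nu^s}{s!}\,g_{p\bar l}\bigl(D^{k_1}\cdots D^{k_s}D^p f\bigr)\,\frac{\p^s}{\p\eta^{k_1}\cdots\p\eta^{k_s}}.
\]
Since $\p/\p\bar z^l$ commutes with each $\p/\p\eta^{k_i}$ but differentiates the coefficients of $A$,
\[
[A, \p/\p\bar z^l] = -\sum_{s\geq 0}\frac{\nu^s}{s!}\,\frac{\p}{\p\bar z^l}\bigl(D^{k_1}\cdots D^{k_s}f\bigr)\,\frac{\p^s}{\p\eta^{k_1}\cdots\p\eta^{k_s}}.
\]
These two sums cancel because of the operator identity $g_{p\bar l}D^p = \p/\p\bar z^l$, which follows from $D^p = g^{\bar qp}\p/\p\bar z^q$ and $g_{p\bar l}g^{\bar qp} = \delta^q_l$; combined with $[D^{k_i},D^{k_j}]=0$ one obtains $g_{p\bar l}D^{k_1}\cdots D^{k_s}D^p f = \p/\p\bar z^l\bigl(D^{k_1}\cdots D^{k_s}f\bigr)$, matching the two expressions term by term.

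The main obstacle I anticipate is purely bookkeeping: tracking the Leibniz expansion and the symmetrization that uses the commutativity of the $D^k$'s. Formula (\ref{E:rbullf}) then follows by the mirror argument applied to the dual characterization of $R_f$, namely $R_f\unit = f$ and supercommutation with $L_{z^k}, L_{\eta^k}, L_{\theta^k}$ and $L_{\p X/\p z^k}, L_{\p X/\p\eta^k}, L_{\p X/\p\theta^k}$ from (\ref{E:superlr}); the analogous cancellation is driven by $g_{kl}\bar D^l = \p/\p z^k$, which comes from $\bar D^l = g^{lp}\p/\p z^p$ and $g_{kl}g^{lp} = \delta^p_k$.
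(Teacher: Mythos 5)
Your proposal is correct and follows the same overall strategy as the paper: let $A$ be the operator on the right-hand side of (\ref{E:lbullf}), check $A\unit=f$, observe that supercommutation with the easy five of the six graded right-multiplication operators from (\ref{E:superlr}) is automatic, and reduce the substantive work to the single commutator $[A,R_{\p X/\p\bar z^l}]$. The one place where you diverge is in how that last commutator is verified. You compute $[A,M_{\nu^{-1}g_{p\bar l}\eta^p}]$ and $[A,\p/\p\bar z^l]$ separately by Leibniz and show they cancel using $g_{p\bar l}D^p=\p/\p\bar z^l$ together with $[D^k,D^p]=0$. The paper instead regroups the $\eta$-dependent part of $R_{\p X/\p\bar z^l}$ with $\p/\p\bar z^l=g_{pl}D^p$ into $\nu^{-1}g_{pl}(\eta^p+\nu D^p)$, then applies a ``Fourier transform'' sending $\p/\p\eta^k\mapsto\xi_k$ and $\eta^k\mapsto-\p/\p\xi_k$, under which $A$ becomes the multiplication operator by $\{\exp(\nu\xi_k D^k)\}f$ and $\eta^p+\nu D^p$ becomes $-\p/\p\xi_p+\nu D^p$, whose commutativity is then immediate. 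The two verifications rest on exactly the same algebraic input; the Fourier trick simply packages the Leibniz bookkeeping into one line, while your computation makes the cancellation explicit term by term. Either is acceptable, and your reduction of (\ref{E:rbullf}) to the conjugate characterization of $R_f$ is correct as well.
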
 
\begin{proof}
Denote temporarily the operator on the right-hand side of (\ref{E:lbullf}) by ~$A$. Since $A$ is even, for any operator $B$ the commutator of~$A$ and~$B$ coincides with their supercommutator. Clearly, $A1=f$ and ~$A$ commutes with the fiberwise Grassmann multiplication operators by the functions on $TU \oplus \Pi TU$ which do not depend on the variables~$\eta$ and with the operators
\begin{eqnarray*}
      R_{\frac{\p X}{\p \bar \eta^l}}= \frac{\p X}{\p \bar \eta^l} + \frac{\p}{\p\bar \eta^l} = \nu^{-1}\frac{\p \Phi_{-1}}{\p \bar z^l} + \frac{\p}{\p\bar \eta^l} \mbox{ and }\\
R_{\frac{\p X}{\p \bar \theta^l}}= \frac{\p X}{\p \bar \theta^l} + \frac{\p}{\p\bar \theta^l} = -\nu^{-1}g_{kl}\theta^k + \frac{\p}{\p\bar \theta^l}.
\end{eqnarray*}
In order to prove formula (\ref{E:lbullf}) it remains to show that $A$ commutes with the operators
\begin{eqnarray*}
 R_{\frac{\p X}{\p \bar z^l}} = \frac{\p X}{\p \bar z^l} + \frac{\p}{\p \bar z^l} = \frac{\p\Phi}{\p \bar z^l} + \nu^{-1} \frac{\p^2 \Phi_{-1}}{\p \bar z^l \p \bar z^q}\bar \eta^q +\\
 \nu^{-1} g_{pl}\left(\eta^p + \nu D^p  \right) + \nu^{-1} g_{kp \bar q}\theta^p \bar \theta^q.
\end{eqnarray*}
Since $A$ commutes with the multiplication operators by the functions on $TU \oplus \Pi TU$ which do not depend on the variables $\eta$, it sufficies to prove that it commutes with the operators $\eta^p + \nu D^p$. We will consider a ``Fourier transform" which maps the operator $\p / \p \eta^k$ to the multiplication operator by the variable $\xi_k$ and the multiplication operator by $\eta^k$ to the operator $-\p/\p \xi^k$. This mapping extends to an isomorphism from the algebra of polynomial differential operators in the variables $\eta^k$ onto that in the variables $\xi_k$. The operator $A$ will be mapped to the multiplication operator by the function $\{\exp(\nu \xi_k D^k)\}f$ and the operator $\eta^p + \nu D^p$ will be mapped to
\begin{equation}\label{E:fourier}
         - \frac{\p}{\p \xi_p} + \nu D^p.
\end{equation}
It is clear that the operators $\{\exp(\nu \xi_k D^k)\}f$ and (\ref{E:fourier}) commute, which concludes the proof of formula (\ref{E:lbullf}). Formula (\ref{E:rbullf}) can be proved similarly.
\end{proof}
Denote by $\I$ the formal Berezin transform of the star product~ $\ast$.
\begin{corollary}\label{C:fastg}
Given functions $f,g$ on $M$ identified with their lifts to $TM \oplus \Pi TM$ via the natural projection, we have
\begin{equation}\label{E:fbullg}
      f \ast g = fg.
\end{equation}
Also, $\I f = f$.
\end{corollary}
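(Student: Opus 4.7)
The plan is to address $f \ast g = fg$ first, as it falls out of Proposition \ref{P:superpot}. Applying formula (\ref{E:lbullf}) with input $g$: since $g$ is lifted from $M$ and so does not depend on any $\eta^k$, every term with $r \geq 1$ vanishes, leaving $L_f g = fg$.

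To establish $\I f = f$, I would combine this with the defining relation $\I(ba) = b \ast a$ for local holomorphic $a$ and antiholomorphic $b$ on $\Pi E$. Taking $a = z^\delta$ and $b = \bar z^\gamma$ (holomorphic and antiholomorphic on $\Pi E$, since they are lifted from $M$), the first part of the corollary gives
\[
       \I(\bar z^\gamma z^\delta) = \bar z^\gamma \ast z^\delta = \bar z^\gamma z^\delta,
\]
so $\I P = P$ for every polynomial $P(z,\bar z)$ in local coordinates, lifted to $\Pi E$.

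The only nontrivial step is to pass from polynomials to arbitrary smooth lifts. For this I would use that $\I = 1 + \sum_{r\geq 1}\nu^r \I_r$ with each $\I_r$ a differential operator on $\Pi E$ of some finite order $N_r$. Restricted to lifts, the fiber derivatives in $\eta,\bar\eta,\theta,\bar\theta$ annihilate $f = f(z,\bar z)$, so at a point $P_0 \in \Pi E$ with base coordinates $(z_0,\bar z_0)$ the value $(\I_r f)(P_0)$ depends only on the Taylor coefficients of $f$ at $(z_0,\bar z_0)$ up to order $N_r$. Replacing $f$ by its Taylor polynomial $f_{N_r}$ at $(z_0,\bar z_0)$ yields $(\I_r f)(P_0) = (\I_r f_{N_r})(P_0) = 0$ since $\I_r$ kills polynomials. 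As $P_0$ was arbitrary, $\I_r f \equiv 0$ for every $r \geq 1$, giving $\I f = f$. The mildly delicate point is precisely this last passage, because the coefficients of $\I_r$ depend on all supermanifold coordinates; but the polynomial test pins them down pointwise in the fiber directions, which is enough.
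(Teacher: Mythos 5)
Your proposal is correct and follows essentially the same route as the paper: formula (\ref{E:fbullg}) is read off from Proposition \ref{P:superpot}, and $\I f = f$ is obtained by testing the defining relation $\I(ba) = b \ast a$ on lifts of holomorphic/antiholomorphic functions and invoking locality of $\I$. The only difference is cosmetic — you test on monomials $\bar z^\gamma z^\delta$ and spell out the Taylor-polynomial passage from polynomials to arbitrary smooth lifts, whereas the paper compresses this into the remark ``since $\I$ is a formal differential operator.''
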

\begin{proof}
Formula (\ref{E:fbullg}) follows from (\ref{E:lbullf}). Let $U \subset M$ be a coordinate chart and $a = a(z)$ and $b = b(\bar z)$ be a holomorphic and an antiholomorphic function on $U$, respectively, lifted to $TU \oplus \Pi TU$. Then formula (\ref{E:fbullg}) implies that
\[
              \I (ba) = b \ast a = ba.
\]
Since $\I$ is a formal differential operator, it follows that $\I  f = f$ for any function $f$ on $M$ lifted to $TM \oplus \Pi TM$.
\end{proof}
Let $U \subset M$ be a contractible coordinate chart. We set $\mathbf{g} := \det (g_{kl})$ and denote by $\log \mathbf{g}$ any branch of the logarithm of $\mathbf{g}$ on $U$.  Below we will calculate a supertrace density for the star product $\ast$ on $TU \oplus \Pi TU$. 
\begin{lemma}\label{L:phitheta}
The following formula holds,
\[
     \bar \theta^q \ast \left(\nu^{-1}g_{kp \bar q}\right) \ast \theta^p  = - \nu^{-1}g_{kp \bar q}\theta^p \bar \theta^q  + g^{qp} g_{kp \bar q}.
\]
\end{lemma}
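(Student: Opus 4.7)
The plan is to compute $\bar\theta^q \ast (\nu^{-1}g_{kp\bar q})\ast\theta^p$ by turning $L_{\bar\theta^q}$ into an explicit formula via a factorization, so that the result follows directly from Proposition~\ref{P:superpot} and the identities in~(\ref{E:superlr}).

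First, since $\nu^{-1}g_{kp\bar q}$ is a function on $M$, Proposition~\ref{P:superpot} says $L_{\nu^{-1}g_{kp\bar q}}$ acts only through $\partial/\partial\eta$ derivatives; applied to $\theta^p$ (which has no $\eta$-dependence) these vanish, so $\nu^{-1}g_{kp\bar q}\ast\theta^p = \nu^{-1}g_{kp\bar q}\theta^p$. The lemma therefore reduces to establishing
\[
L_{\bar\theta^q}\bigl(\nu^{-1}g_{kp\bar q}\theta^p\bigr) = -\nu^{-1}g_{kp\bar q}\theta^p\bar\theta^q + g^{qp}g_{kp\bar q},
\]
with $p,q$ summed.

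The key step is the factorization $\bar\theta^q = g^{qk'}\ast (g_{k'j}\bar\theta^j)$. Indeed, $g^{qk'}$ is a function on $M$ and $g_{k'j}\bar\theta^j$ has no $\eta$-dependence, so by Proposition~\ref{P:superpot} the star product collapses to the pointwise product $g^{qk'}g_{k'j}\bar\theta^j = \delta^q_j\bar\theta^j = \bar\theta^q$. The homomorphism $L_{f\ast g}=L_fL_g$ then yields $L_{\bar\theta^q}=L_{g^{qk'}}\,L_{g_{k'j}\bar\theta^j}$. From the superpotential~(\ref{E:superpot}) one reads off $\partial X/\partial\theta^{k'} = \nu^{-1}g_{k'j}\bar\theta^j$, and then (\ref{E:superlr}) supplies the exact formula
\[
L_{g_{k'j}\bar\theta^j} \;=\; g_{k'j}\bar\theta^j\cdot \;+\; \nu\,\frac{\partial}{\partial\theta^{k'}}.
\]

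Applied to $\nu^{-1}g_{kp\bar q}\theta^p$, the operator $g_{k'j}\bar\theta^j\cdot + \nu\,\partial/\partial\theta^{k'}$ produces the two $\eta$-independent pieces $\nu^{-1}g_{k'j}g_{kp\bar q}\bar\theta^j\theta^p$ and $g_{kk'\bar q}$, so $L_{g^{qk'}}$ acts on them (again by Proposition~\ref{P:superpot}) as pointwise multiplication by $g^{qk'}$. Using $g^{qk'}g_{k'j} = \delta^q_j$, the Grassmann identity $\bar\theta^q\theta^p = -\theta^p\bar\theta^q$, and relabeling $k'\to p$ in the second piece, these pieces become exactly $-\nu^{-1}g_{kp\bar q}\theta^p\bar\theta^q$ and $g^{qp}g_{kp\bar q}$, which sum to the right-hand side.

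The main obstacle I anticipate is recognizing the factorization $\bar\theta^q = g^{qk'}\ast g_{k'j}\bar\theta^j$ as the device that converts the implicit operator $L_{\bar\theta^q}$ into the closed-form expression supplied by~(\ref{E:superlr}); once that step is in place the remainder is careful bookkeeping of Grassmann signs and index contractions.
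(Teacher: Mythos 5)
Your argument is correct and follows essentially the same route as the paper: both use Proposition~\ref{P:superpot} to reduce $\ast$-products with lifted functions to pointwise products, insert a $g^{\cdot\cdot}g_{\cdot\cdot}$ factorization through the metric so that the implicit operator $L_{\bar\theta^q}$ can be replaced by the explicit operator coming from formula~(\ref{E:lbartheta}), and then read off the two resulting terms. The only cosmetic difference is where the factorization is inserted: you split the leading factor as $\bar\theta^q=g^{qk'}\ast(g_{k'j}\bar\theta^j)$, while the paper splits the middle factor as $\nu^{-1}g_{kp\bar q}=\nu^{-1}g_{sq}\ast g^{ts}\ast g_{kp\bar t}$; these are mirror images of the same trick and lead to the same computation.
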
 
\begin{proof}
Using Proposition \ref{P:superpot} and a formula from (\ref{E:superlr}),
\begin{equation}\label{E:lbartheta}
     L_{\frac{\p X}{\p \theta^s}} =L_{\nu^{-1} g_{sq}\bar \theta^q} = \nu^{-1} g_{sq}\bar \theta^q + \frac{\p}{\p \theta^s},
\end{equation}
we obtain the statement of the lemma from the calculation
\begin{eqnarray*}
    \bar \theta^q \ast \left(\nu^{-1}g_{kp \bar q}\right) \ast \theta^p  = \bar \theta^q \ast (\nu^{-1} g_{sq}) \ast g^{ts} \ast g_{kp \bar t} \ast \theta^p =\\
 (\nu^{-1} g_{sq}\bar \theta^q) \ast \left(g^{ts}g_{kp \bar t}\right) \ast \theta^p  =
(\nu^{-1} g_{sq}\bar \theta^q) \ast \left(g^{ts}g_{kp \bar t}\theta^p\right)   =\\  (\nu^{-1} g_{sq}\bar \theta^q) \left(g^{ts}g_{kp \bar t}\theta^p\right) + 
g^{ts}g_{ks \bar t}.
\end{eqnarray*}
\end{proof}
\begin{proposition}\label{E:superdens}
The even superpotential $X' : = - X + \log \mathbf{g}$ satisfies the following equations.
\begin{eqnarray}\label{E:superlong}
\frac{\p X}{\p z^k} + \I\left(\frac{\p X'}{\p z^k}\right) =0, \frac{\p X}{\p \bar z^l} + \I\left(\frac{\p X'}{\p \bar z^l}\right) =0,\nonumber \\
\frac{\p X}{\p \eta^k} + \I\left(\frac{\p X'}{\p \eta^k}\right) =0, \frac{\p X}{\p \bar\eta^l} + \I\left(\frac{\p X'}{\p \bar\eta^l}\right) =0,\\
\frac{\p X}{\p \theta^k} + \I\left(\frac{\p X'}{\p \theta^k}\right) =0, \mbox{ and } \frac{\p X}{\p \bar\theta^l} + \I\left(\frac{\p X'}{\p \bar\theta^l}\right) =0.\nonumber
\end{eqnarray}
\end{proposition}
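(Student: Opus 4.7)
The proposition is the $TM\oplus\Pi TM$-analog of the local trace-density equations (\ref{E:altdens}) and of their super-$\Pi E$ counterpart from Section 3: it asserts that the even superpotential $X'=-X+\log\mathbf{g}$ is ``dual'' to $X$, so that $e^{X+X'}\,dz\,d\bar z\,d\eta\,d\bar\eta\,d\theta\,d\bar\theta=\mathbf{g}\,dz\,d\bar z\,d\eta\,d\bar\eta\,d\theta\,d\bar\theta$ will serve as a local supertrace density for $\ast$. My strategy is to verify each of the six identities in (\ref{E:superlong}) directly, by computing $\p X/\p\xi$ and $\p X'/\p\xi$ from (\ref{E:superpot}) and evaluating $\I(\p X'/\p\xi)$ via the Berezin-transform identities (\ref{E:sbertran}), Corollary \ref{C:fastg}, the multiplication formulas (\ref{E:lbullf})--(\ref{E:rbullf}) of Proposition \ref{P:superpot}, and Lemma \ref{L:phitheta}.

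The four fiber-variable equations are essentially immediate, because $\log\mathbf{g}$ is independent of $\eta,\bar\eta,\theta,\bar\theta$, so $\p X'/\p\xi=-\p X/\p\xi$ in each of these cases. For $\xi=\eta^k$ one reads off $\p X/\p\eta^k=\nu^{-1}\p\Phi_{-1}/\p z^k$, a lift of a function on $M$, so the identity reduces to $\I f=f$ from Corollary \ref{C:fastg}; the case $\xi=\bar\eta^l$ is identical. For $\xi=\theta^k$ we have $\p X/\p\theta^k=\nu^{-1}g_{kl}\bar\theta^l$, and using the second formula of (\ref{E:sbertran}) to pull the antiholomorphic $\bar\theta^l$ through $\I$ reduces the calculation to $\bar\theta^l\ast g_{kl}$, which by (\ref{E:rbullf}) equals the pointwise product $g_{kl}\bar\theta^l$ because $\bar\theta^l$ has no $\bar\eta$-dependence; the case $\xi=\bar\theta^l$ is symmetric via (\ref{E:lbullf}).

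The substantive content is the $z^k$ equation (the $\bar z^l$ case being conjugate). I would split
\[
\frac{\p X}{\p z^k}=\frac{\p\Phi}{\p z^k}+\nu^{-1}\frac{\p^2\Phi_{-1}}{\p z^k\p z^p}\eta^p+\nu^{-1}g_{kl}\bar\eta^l+\nu^{-1}g_{kr\bar l}\,\theta^r\bar\theta^l
\]
and apply $\I$ to each summand. The first summand is fixed by Corollary \ref{C:fastg}. For the $\eta$-summand, (\ref{E:sbertran}) followed by (\ref{E:lbullf}) yields a first-order correction $\nu\,g^{qp}g_{kp\bar q}$; symmetrically, (\ref{E:sbertran}) with (\ref{E:rbullf}) produces a correction $\nu\,g^{lp}g_{kp\bar l}$ on the $\bar\eta$-summand; and Lemma \ref{L:phitheta} produces a correction $-g^{qp}g_{kp\bar q}$ on the fermion summand. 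By the standard trace identity $g^{lp}g_{kp\bar l}=\p\log\mathbf{g}/\p z^k$, each correction equals $\pm\,\p\log\mathbf{g}/\p z^k$, and they combine to give $\I(\p X/\p z^k)=\p X/\p z^k+\p\log\mathbf{g}/\p z^k$. Since $\p X'/\p z^k=-\p X/\p z^k+\p\log\mathbf{g}/\p z^k$ and $\I$ fixes the $M$-intrinsic function $\p\log\mathbf{g}/\p z^k$, the required identity follows at once. The main obstacle is essentially bookkeeping: the graded-sign calculation underlying Lemma \ref{L:phitheta}, and recognizing that the three a priori independent correction terms are, up to sign, the same $M$-intrinsic quantity $\p\log\mathbf{g}/\p z^k$, so that they cancel into a single $\log\det$-correction.
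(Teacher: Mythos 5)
Your proof is correct and follows essentially the same calculation as the paper: verifying the $z^k$-equation term by term via (\ref{E:sbertran}), the multiplication formulas (\ref{E:lbullf})--(\ref{E:rbullf}), and Lemma \ref{L:phitheta}, while disposing of the four fiber-variable equations immediately, exactly as the paper does using (\ref{E:superlr}) and Proposition \ref{P:superpot}. One small notational slip: the corrections you attribute to the $\eta$- and $\bar\eta$-summands should read $g^{qp}g_{kp\bar q}$, not $\nu\,g^{qp}g_{kp\bar q}$ (the $\nu$ from (\ref{E:lbullf})--(\ref{E:rbullf}) cancels against the $\nu^{-1}$ in the superpotential), as your own conclusion $\I(\p X/\p z^k)=\p X/\p z^k+\p\log\mathbf{g}/\p z^k$ correctly requires.
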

\begin{proof}
Using formulas (\ref{E:sbertran}), Proposition \ref{P:superpot}, and Lemma \ref{L:phitheta}, we get that
\begin{eqnarray*}
\frac{\p X}{\p z^k} + \I\left(\frac{\p X'}{\p z^k}\right) = \nu^{-1}\frac{\p^2 \Phi_{-1}}{\p z^k \p z^p}\eta^p + \nu^{-1} g_{kq}\bar \eta^q + \nu^{-1}g_{kp \bar q}\theta^p \bar \theta^q +\\
\I\left(-\nu^{-1}\frac{\p^2 \Phi_{-1}}{\p z^k \p z^p}\eta^p - \nu^{-1} g_{kq}\bar \eta^q - \nu^{-1}g_{kp \bar q}\theta^p \bar \theta^q + \frac{\p \log \mathbf{g}}{\p z^k}\right) =\\
 \nu^{-1}\frac{\p^2 \Phi_{-1}}{\p z^k \p z^p}\eta^p + \nu^{-1} g_{kq}\bar \eta^q + \nu^{-1}g_{kp \bar q}\theta^p \bar \theta^q - \nu^{-1}\frac{\p^2 \Phi_{-1}}{\p z^k \p z^p}\ast \eta^p\\
- \bar\eta^q \ast (\nu^{-1} g_{kq}) + \bar \theta^q \ast \left(\nu^{-1}g_{kp \bar q}\right) \ast \theta^p + g^{qp} g_{kp \bar q} =\\
\nu^{-1}g_{kp \bar q}\theta^p \bar \theta^q + \bar \theta^q \ast \left(\nu^{-1}g_{kp \bar q}\right) \ast \theta^p - g^{qp} g_{kp \bar q} =0.\\
\end{eqnarray*}
The second equation in (\ref{E:superlong}) can be proved similarly. The last four equations in (\ref{E:superlong}) follow immediately from formulas (\ref{E:superlr}) and Proposition~ \ref{P:superpot}. 
\end{proof}

Observe that $e^{X + X'} = \mathbf{g}$. According to formula (\ref{E:strdens}), Proposition \ref{E:superdens} implies that
\[
                     \mathbf{g}\, dz d\bar z d\eta d\bar \eta d\theta d\bar \theta
\]
is a supertrace density for the product $\ast$ on $TU \oplus \Pi TU$. 

Denote by $\gamma$ the global fiberwise $(1,1)$-form on $TM$ given in local coordinates by the formula
\begin{equation}\label{E:gammaf}
                \gamma = \nu^{-1}g_{kl} d\eta^k \wedge d\bar\eta^l.
\end{equation}
The global fiberwise volume form $\gamma^m$ on $TM$ is given locally by a scalar multiple of $\nu^{-m}\mathbf{g}d\eta d\bar \eta$.
We assume that $d\beta = dzd\bar z d\theta d\bar \theta$ is the globally defined canonical Berezin density on $\Pi TM$ 
\footnote{If $\alpha = f(z,\bar z)dz^1 \wedge \ldots \wedge dz^m \wedge d \bar z^1 \wedge \ldots \wedge d\bar z^m$ is a compactly supported volume form on $U$, denote by $\hat\alpha = f(z,\bar z) \theta^1 \ldots \theta^m \bar\theta^1 \ldots \bar\theta^m$ the corresponding function on  $\Pi TU$.
Then
\[
             \int_{\Pi TU} \hat\alpha \, d\beta = \int_U \alpha.
\]}. We introduce a global supertrace density of the star product ~$\ast$ on $TM \oplus \Pi TM$ by the formula
\begin{equation}\label{E:globmu}
     \mu := \frac{1}{m!}\left(\frac{i}{2\pi} \gamma\right)^m d\beta.
\end{equation}

\begin{lemma}\label{L:prodber}
  For any formal functions $F, G$ on  $TM \oplus \Pi TM$ such that $F$ or $G$ is compactly supported over $TM$ the following identity holds,
\[
     \int F \ast G \, \mu = \int F\, \I^{-1}(G) \mu.
\]
\end{lemma}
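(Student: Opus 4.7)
The statement is local in $M$: the condition that one of $F,G$ be compactly supported over $TM$ allows us, by a partition of unity on $M$ adapted to a cover by contractible coordinate charts, to assume the common support lies in a single trivialization $TU\oplus\Pi TU$. We then use the explicit description of $\ast$ from Section~\ref{S:tmpitm}, in particular the superpotential~\eqref{E:superpot}, the left/right multiplication formulas~\eqref{E:superlr} and Proposition~\ref{P:superpot}, together with the local expression $\mu\propto\mathbf{g}\,dz\,d\bar z\,d\eta\,d\bar\eta\,d\theta\,d\bar\theta$ that follows from $e^{X+X'}=\mathbf{g}$ in Proposition~\ref{E:superdens}.

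The argument rests on three ingredients: first, the supertrace property $\int F\ast G\,\mu=(-1)^{|F||G|}\int G\ast F\,\mu$ of $\mu$ for $\ast$, inherited from the construction of the canonical supertrace density in~\cite{JGPsubm}; second, the intertwining relations $\I(fa)=\I(f)\ast a$ and $\I(bf)=b\ast\I(f)$ of~\eqref{E:sbertran}; and third, the normalization identity
\[
 \int\I(K)\,\mu=\int K\,\mu
\]
for $K$ compactly supported over $TM$. Granted these three, the lemma follows by a short case analysis. For $F=a$ a locally holomorphic function on the chart, $L_a=a$ gives $a\ast G=aG$; on the other side, the inverted first intertwining $\I^{-1}(G\ast a)=\I^{-1}(G)\cdot a$ combined with the normalization identity and the supertrace property yields
\[
 \int a\,\I^{-1}(G)\,\mu=\int\I^{-1}(G\ast a)\,\mu=\int G\ast a\,\mu=\int a\ast G\,\mu=\int aG\,\mu,
\]
proving the identity (with graded signs from pointwise supercommutativity tracked in the odd case). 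The analogous computation using the second relation of~\eqref{E:sbertran} and $R_b=b$ handles $F=b$ antiholomorphic. The case of a mixed product $F=ab$ then follows because $L_a=a$ gives $ab=a\ast b$, hence $(ab)\ast G=a\ast(b\ast G)$; the $F=a$ case applied to $b\ast G$ combined with $\I^{-1}(b\ast h)=b\,\I^{-1}(h)$ produces the desired equality. Iterating handles arbitrary finite products of (anti)holomorphic factors, including the odd fiber generators $\theta^k,\bar\theta^l$.

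The extension to an arbitrary formal function $F$ is a density/continuity argument: order by order in $\nu$, smooth functions on the compact common support in $TU\oplus\Pi TU$ are approximated in the $C^\infty$ topology by polynomial sums of products of holomorphic factors in $z,\eta,\theta$ with antiholomorphic factors in $\bar z,\bar\eta,\bar\theta$, and both sides of the identity depend on $F$ through finite-order formal differential operators and so are continuous in this topology. The main obstacle is the normalization identity $\int\I(K)\,\mu=\int K\,\mu$, which encodes the compatibility between the Berezin transform and the canonical choice of supertrace density; I expect this to be proved by expanding $\I-1$ order by order in $\nu$ and integrating by parts in the fiber variables, the $\eta,\bar\eta,\theta,\bar\theta$-independence of the density factor $\mathbf{g}$ killing the weight-derivative contributions and the compact support over $TM$ killing the boundary terms, with the remainder controlled by the (super-)K\"ahler identities between the Poisson tensor on $TM\oplus\Pi TM$ and the volume $\mathbf{g}\,dz\,d\bar z\,d\eta\,d\bar\eta\,d\theta\,d\bar\theta$.
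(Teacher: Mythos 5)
Your plan has a genuine gap at the pivot. You reduce everything to a ``normalization identity''
\[
   \int \I(K)\,\mu = \int K\,\mu,
\]
which you never actually prove, remarking only that you ``expect'' it from a $\nu$-order-by-order integration by parts in the fiber directions. That sketch is too vague to count as a proof; nothing you describe identifies the actual cancellation mechanism that makes $\I$ compatible with $\mu$. More seriously, this is not an auxiliary lemma sitting below the statement: it \emph{is} the statement in the special case $F=\unit$. Indeed, setting $F=\unit$ in Lemma \ref{L:prodber} gives $\int G\,\mu = \int \I^{-1}(G)\,\mu$, which is your identity with $K=\I^{-1}(G)$. So your argument reduces the lemma to a special case of itself and then leaves that special case undischarged.

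The paper's proof avoids this circularity entirely and is shorter. After reducing to $F$ compactly supported on a trivialization $TU\oplus\Pi TU$ and $G=b\ast a$ with $a$ holomorphic and $b$ antiholomorphic on the chart, one observes that $\I^{-1}(b\ast a)=ba$ is immediate from the \emph{definition} of the formal Berezin transform $\I(ba)=b\ast a$ — no intertwining relations, no integral identity for $\I$ are needed. The rest is one line: pointwise supercommutativity $Fba=(-1)^{|a|(|F|+|b|)}aFb$, the separation-of-variables identities $a\ast h=ah$ and $h\ast b=hb$ (so $aFb=a\ast F\ast b$), and the supertrace property of $\mu$, which turns $(-1)^{|a|(|F|+|b|)}\int a\ast(F\ast b)\,\mu$ into $\int(F\ast b)\ast a\,\mu=\int F\ast G\,\mu$. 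Note that these are precisely the three ingredients you would need to prove your normalization identity on the generating class $K=ba$; the detour buys nothing. If you want to keep the structure of your argument, the fix is to drop the unproved normalization identity and instead evaluate $\I^{-1}(b\ast a)=ba$ directly from the definition of $\I$, which is what the paper does.
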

\begin{proof}
 It suffices to prove the lemma on a coordinate chart $U \subset M$ for $F \in (C^\infty_0(TU)[\theta,\bar\theta])((\nu))$ and $G = b \ast a$, where $a=a(z,\eta,\theta)$ is holomorphic and $b=b(\bar z, \bar\eta,\bar\theta)$ is antiholomorphic on $TU \oplus \Pi TU$. Then
\begin{eqnarray*}
    \int F\, \I^{-1}(G)\, \mu = \int F\, \I^{-1}(b \ast a)\, \mu = 
  \int Fba \, \mu =  \int (-1)^{|a|(|F| + |b|)}aFb \, \mu \\
= \int (-1)^{|a|(|F| + |b|)}a \ast F \ast b \, \mu =
 \int F \ast b \ast a \, \mu = \int F \ast G \, \mu.
\end{eqnarray*}
\end{proof}

In the rest of this section we fix a contractible coordinate chart $U \subset M$ and a superpotential (\ref{E:superpot}) on $TU \oplus \Pi TU$.

\begin{lemma}\label{L:berdx}
The following identity holds,
\[
\I^{-1} \left(\frac{dX}{d\nu}\right) = \frac{dX}{d\nu} + \frac{m}{\nu}.
\]
\end{lemma}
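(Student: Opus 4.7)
My plan is to verify the equivalent identity $\I\bigl(\tfrac{dX}{d\nu}\bigr) = \tfrac{dX}{d\nu} - \tfrac{m}{\nu}$; this is equivalent to the stated formula because applying $\I$ to both sides and using $\I(m/\nu) = m/\nu$ from Corollary \ref{C:fastg} recovers the original claim. Differentiating (\ref{E:superpot}) in $\nu$ yields
\[
\frac{dX}{d\nu} = \frac{d\Phi}{d\nu} - \nu^{-2}\!\left(\frac{\p\Phi_{-1}}{\p z^p}\eta^p + \frac{\p\Phi_{-1}}{\p\bar z^l}\bar\eta^l + g_{kl}\theta^k\bar\theta^l\right),
\]
and I would apply $\I$ to each of these four summands separately. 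Since $\tfrac{d\Phi}{d\nu}$ is the lift of a formal function on $M$, Corollary \ref{C:fastg} gives $\I\bigl(\tfrac{d\Phi}{d\nu}\bigr) = \tfrac{d\Phi}{d\nu}$, so this term contributes no correction.

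For the $\eta^p$-summand, (\ref{E:sbertran}) gives $\I\bigl(\nu^{-2}\tfrac{\p\Phi_{-1}}{\p z^p}\eta^p\bigr) = \nu^{-2}\tfrac{\p\Phi_{-1}}{\p z^p}\ast \eta^p$, and formula (\ref{E:lbullf}) of Proposition \ref{P:superpot} shows that only the $r=0,1$ terms of $L_{\nu^{-2}\p\Phi_{-1}/\p z^p}$ act nontrivially on $\eta^p$, producing $\nu^{-2}\tfrac{\p\Phi_{-1}}{\p z^p}\eta^p + \nu^{-1}D^p\tfrac{\p\Phi_{-1}}{\p z^p}$. The trace identity $D^p\tfrac{\p\Phi_{-1}}{\p z^p} = g^{qp}g_{p\bar q} = m$ then delivers the crucial correction $+m/\nu$. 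The $\bar\eta^l$-summand is treated symmetrically via (\ref{E:rbullf}) and $\bar D^l\tfrac{\p\Phi_{-1}}{\p\bar z^l} = m$, contributing another $+m/\nu$.

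The $\theta^k\bar\theta^l$-summand is the technical heart and is modeled on Lemma \ref{L:phitheta}. After the Grassmann swap $g_{kl}\theta^k\bar\theta^l = -g_{kl}\bar\theta^l\theta^k$, I would apply (\ref{E:sbertran}) twice to move $\bar\theta^l$ and $\theta^k$ outside of $\I$, reducing the problem to computing $\bar\theta^l \ast (\nu^{-1}g_{kl}\theta^k)$. Since $\nu^{-1}g_{kl}$ has no $\eta$-dependence, Proposition \ref{P:superpot} gives $\nu^{-1}g_{kl} \ast \theta^k = \nu^{-1}g_{kl}\theta^k$. Recognising $\nu^{-1}g_{kl}\theta^k = -\tfrac{\p X}{\p\bar\theta^l}$ from (\ref{E:superpot}), I can evaluate the remaining $\ast$-product using the operator identity $R_{\p X/\p\bar\theta^l} = \tfrac{\p X}{\p\bar\theta^l} + \tfrac{\p}{\p\bar\theta^l}$ from (\ref{E:superlr}) combined with the graded rule $\bar\theta^l \ast f = -R_f(\bar\theta^l)$ for odd $f$; the term $\sum_l \tfrac{\p\bar\theta^l}{\p\bar\theta^l}$ produces the constant $m$, and sign tracking yields the contribution $\nu^{-2}g_{kl}\theta^k\bar\theta^l - m/\nu$. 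The three corrections aggregate through the overall minus sign in $\tfrac{dX}{d\nu}$ as $-[(+m/\nu) + (+m/\nu) + (-m/\nu)] = -m/\nu$, which is precisely the required defect. The only delicate point is the Grassmann sign bookkeeping in the $\theta\bar\theta$ step; apart from this, the whole argument reduces to direct applications of Proposition \ref{P:superpot}, Corollary \ref{C:fastg}, and the relations (\ref{E:superlr}), (\ref{E:sbertran}).
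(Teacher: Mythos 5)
Your proposal is correct and follows essentially the same route as the paper's own proof: expand $\tfrac{dX}{d\nu}$ from (\ref{E:superpot}), pull holomorphic and antiholomorphic factors through $\I$ via (\ref{E:sbertran}), and evaluate the resulting $\ast$-products with Proposition \ref{P:superpot} and the $\theta$-multiplication formula to extract the three trace corrections $\pm m/\nu$. The only cosmetic difference is that you compute the $\theta\bar\theta$-term via $R_{\p X/\p\bar\theta^l}$ from (\ref{E:superlr}), whereas the paper invokes the mirror-image $L$-operator identity (\ref{E:lbartheta}); both give the same $-m/\nu$ defect.
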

\begin{proof}
We have, using (\ref{E:sbertran}), (\ref{E:lbullf}), (\ref{E:rbullf}), and (\ref{E:lbartheta}),
\begin{eqnarray*}
\I\left(\frac{dX}{d\nu}\right) = \I\left(\frac{d\Phi}{d\nu} - \frac{1}{\nu^2}\frac{\p \Phi_{-1}}{\p z^k}\eta^k  - \frac{1}{\nu^2}\bar\eta^l \frac{\p \Phi_{-1}}{\p \bar z^l}  + \frac{1}{\nu^2} \bar\theta^l g_{kl} \theta^k\right)=\\
\frac{d\Phi}{d\nu} - \frac{1}{\nu^2}\frac{\p \Phi_{-1}}{\p z^k}\ast \eta^k  - \frac{1}{\nu^2}\bar\eta^l \ast \frac{\p \Phi_{-1}}{\p \bar z^l}  + \frac{1}{\nu^2} \bar\theta^l g_{kl}\ast \theta^k = \frac{dX}{d\nu} - \frac{m}{\nu},
\end{eqnarray*}
whence the lemma follows.
\end{proof}
One can construct a local $\nu$-derivation analogous to (\ref{E:separdelta}) for the star product~$\ast$ on $TU \oplus \Pi TU$,
\[
       \delta = \frac{d}{d\nu} + \frac{d X}{d\nu} - R_{\frac{d X}{d\nu}}.
\]
\begin{theorem}\label{T:traceid}
  For any function $F \in (C^\infty_0(TU)[\theta,\bar\theta])((\nu))$ the following identity holds,
\[
      \frac{d}{d\nu} \int F \, \mu = \int \delta(F) \, \mu.
\]
\end{theorem}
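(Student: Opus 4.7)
The plan is to compute $\int \delta(F)\,\mu$ and $\frac{d}{d\nu}\int F\,\mu$ independently and observe they agree, using Lemmas \ref{L:prodber} and \ref{L:berdx} as the key inputs.

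First I would unpack $\delta$ by its definition and handle the right-multiplication term. Since $X$ is even, so is $\frac{dX}{d\nu}$, hence $R_{\frac{dX}{d\nu}}(F) = F \ast \frac{dX}{d\nu}$ without any graded sign, and the pointwise products $\frac{dX}{d\nu}\cdot F$ and $F\cdot\frac{dX}{d\nu}$ agree. Because $F$ is compactly supported over $TU \subset TM$, Lemma \ref{L:prodber} applies with $G = \frac{dX}{d\nu}$ and yields
\[
\int R_{\frac{dX}{d\nu}}(F)\,\mu \;=\; \int F \ast \tfrac{dX}{d\nu}\,\mu \;=\; \int F\,\I^{-1}\!\left(\tfrac{dX}{d\nu}\right)\mu.
\]
Inserting Lemma \ref{L:berdx}, which gives $\I^{-1}(dX/d\nu) = dX/d\nu + m/\nu$, the two $\frac{dX}{d\nu}$-contributions in $\int\delta(F)\,\mu$ cancel, leaving
\[
\int \delta(F)\,\mu \;=\; \int \tfrac{dF}{d\nu}\,\mu \;-\; \tfrac{m}{\nu}\int F\,\mu.
\]

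Next I would pin down the $\nu$-dependence of $\mu$. From $\gamma = \nu^{-1}g_{kl}d\eta^k\wedge d\bar\eta^l$ and the formula $\mu = \frac{1}{m!}\bigl(\frac{i}{2\pi}\gamma\bigr)^m d\beta$, one expands $\gamma^m$ to get that $\mu$ equals a numerical constant times $\nu^{-m}\mathbf{g}\,dz\,d\bar z\,d\eta\,d\bar\eta\,d\theta\,d\bar\theta$; in particular the only $\nu$-dependence of $\mu$ is the scalar factor $\nu^{-m}$. Therefore $\frac{d\mu}{d\nu} = -\frac{m}{\nu}\mu$, and the Leibniz rule under the integral gives
\[
\frac{d}{d\nu}\int F\,\mu \;=\; \int \tfrac{dF}{d\nu}\,\mu \;-\; \tfrac{m}{\nu}\int F\,\mu,
\]
which matches the expression for $\int \delta(F)\,\mu$ obtained above, completing the identification.

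The only real subtleties are bookkeeping items: verifying that Lemma \ref{L:prodber} applies (which uses that $F$, being in $(C^\infty_0(TU)[\theta,\bar\theta])((\nu))$, extends by zero to a function compactly supported over $TM$), that no graded sign enters $R_{dX/d\nu}(F)$ (because $dX/d\nu$ is even), and that the $\nu$-dependence of $\mu$ is exhausted by $\nu^{-m}$ so that the term $-\frac{m}{\nu}\mu$ produced by Lemma \ref{L:berdx} exactly accounts for $d\mu/d\nu$. There is no substantial obstacle beyond these checks; the identity is essentially forced by the two preceding lemmas and the explicit form of $\mu$.
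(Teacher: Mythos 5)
Your proof is correct and follows essentially the same path as the paper's: split $\delta(F)$ into its three terms, trade $F\ast\frac{dX}{d\nu}$ for $F\,\I^{-1}(\frac{dX}{d\nu})$ via Lemma \ref{L:prodber}, substitute Lemma \ref{L:berdx}, cancel the pointwise $\frac{dX}{d\nu}F$ terms, and recognize $-\frac{m}{\nu}\int F\mu$ as accounting for $\frac{d\mu}{d\nu}$. The only difference is that you spell out explicitly the step that $\mu$ carries an overall $\nu^{-m}$ factor, which the paper leaves implicit in its final equality.
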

\begin{proof}
We have by Lemmas \ref{L:prodber} and \ref{L:berdx},
\begin{eqnarray*}
  \int \delta(F) \, \mu = \int \left( \frac{dF}{d\nu} + \frac{dX}{d\nu} F - F \ast \frac{dX}{d\nu}\right) \, \mu =\\
\int \left(\frac{dF}{d\nu} + \frac{dX}{d\nu} F - F\I^{-1}\left( \frac{dX}{d\nu}\right)\right) \, \mu =\\
\int \left(\frac{dF}{d\nu} - \frac{m}{\nu}F \right)\, \mu = \frac{d}{d\nu} \int F \, \mu.
\end{eqnarray*}
\end{proof}

{\it Remark.} The statement of Theorem \ref{T:traceid} remains valid if the derivation~$\delta$ is modified by an inner derivation, i.e., replaced with the derivation $\delta + L_f - R_f$ for any $f \in C^\infty(TU \oplus \Pi TU)((\nu))$.

\section{The standard filtration}

Let $M$ be a complex manifold and $U \subset M$ be an open subset. Denote by $\P_k(U)$ the space of fiberwise homogeneous polynomial functions of degree $k$ on $TU \oplus \Pi TU$. Then the space 
$\P(U):=\prod_{k\geq 0} \P_k(U)$ of formal series $f = f_0 + f_1 + \ldots$, where $f_k \in \P_k(U)$, can be interpreted as the space of functions on the formal neighborhood of the zero section of the bundle $TU \oplus \Pi TU$.  If $U$ is a coordinate chart, $\P(U)$ is identified with $C^\infty(U)[[\eta,\bar\eta,\theta,\bar\theta]]$. We set $\Q(U) := \P(U)((\nu))$.

Given $i \in \Z$, denote by $\F^i(U)$ the space of formal series of the form 
\begin{equation}\label{E:ffromf}
f = \sum_{j=i}^\infty \sum_{r = -\infty}^{\lfloor j/2\rfloor} \nu^r f_{r, j-2r},
\end{equation}
where $f_{r,k} \in \P_k(U)$. Since $\F^{i+1}(U) \subset \F^i(U)$, $\{\F^i(U)\}$ is a descending filtration on the space
\[
\F(U) := \bigcup_{i \in \Z} \F^i(U). 
\]
This filtration is induced by a grading $\deg$ such that $\deg \nu = 2$ and $\deg f = k$ for $f \in \P_k(U)$. We denote by $\mathrm{fdeg} \, f$ the filtration degree of an element $f \in \F(U)$. These filtration and grading will be called {\it standard}. We have $\Q(U) \subset \F(U)$ and set $\Q^i(U) := \F^i(U) \cap \Q(U)$. We say that an element (\ref{E:ffromf}) of $\F(U)$ is compactly supported over $U$ if for each pair of indices $r,k$ there exists a compact $K_{r,k} \subset U$ such that $f_{r,k}$ vanishes on 
\[
T(U \setminus K_{r,k}) \oplus \Pi T(U \setminus K_{r,k}).
\]
We will write $\Q:= \Q(M), \F:= \F(M)$, etc.

If $U$ is a coordinate chart, a differential operator on $C^\infty(U)[[\eta,\bar\eta,\theta,\bar\theta]]$ has coefficients from that space and partial derivatives in the variables $z, \bar z, \eta,\bar\eta,\theta,\bar\theta$. One can define a differential operator on the space $\P$ using a partition of unity subject to a cover of $M$ by coordinate charts. A natural formal differential operator on $\Q$ is an operator $A =  A_0 + \nu A_1 + \ldots,$ where $A_r$ is a differential operator of order not greater than~$r$ on $\P$. Since in local coordinates  $\deg \p/\p \eta = \deg \p/\p \bar\eta =\deg \p/\p \theta =\deg \p/\p \bar\theta = -1$, we see that $\fdeg  A_r \geq -r$.

 \begin{lemma}\label{L:homfdo}
A natural formal differential operator $A$ on the space $\Q$ is of standard filtration degree at least zero, $A = A^0 + A^1 + \ldots$,
where $\deg A^i = i$. The homogeneous component $A^i$ is a formal differential operator of order not greater than $i$. It can be written as
\begin{equation}\label{E:ari}
                A^i = \sum_{r=0}^i \nu^r A_r^{i-2r},
\end{equation}
where $\deg A_r^j = j$.  The operator $A$ naturally extends to the space $\F$ and respects the standard filtration.
\end{lemma}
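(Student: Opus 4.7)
The plan is to decompose each $A_r$ into standard-homogeneous components and regroup by total standard degree. Starting from $A = \sum_{r \geq 0} \nu^r A_r$ with $A_r$ of order at most $r$ on $\P$, I would observe that in local coordinates each summand of $A_r$ has the form $c\,D$ with $c \in \P$ (hence $\deg c \geq 0$) and $D$ a product of at most $r$ partial derivatives. Since $\p/\p z^k$ and $\p/\p \bar z^l$ have standard degree $0$ while the fiber derivatives $\p/\p \eta^k,\, \p/\p \bar\eta^l,\, \p/\p \theta^k,\, \p/\p \bar\theta^l$ all have degree $-1$, we have $\deg D \geq -r$. Projecting onto the degree-$j$ eigenspace of the grading therefore yields a decomposition $A_r = \sum_j A_r^j$ with $A_r^j = 0$ for $j < -r$, and each $A_r^j$ still of order at most $r$. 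Because the grading is generated by the intrinsic fiber Euler field on $TM \oplus \Pi TM$ together with the rescaling $\nu \mapsto t^2 \nu$, these homogeneous pieces assemble into globally defined operators on $\P$.

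Substituting and re-summing by the new index $i := 2r + j$ gives
\[
A \;=\; \sum_{r \geq 0} \nu^r \sum_{j \geq -r} A_r^j \;=\; \sum_{i \geq 0} A^i, \qquad A^i \;=\; \sum_{r=0}^{i} \nu^r A_r^{i-2r},
\]
where for fixed $i$ the range $0 \leq r \leq i$ is forced by $j = i - 2r \geq -r$ together with $r \geq 0$. Each term $\nu^r A_r^{i-2r}$ has standard degree $2r + (i - 2r) = i$ and order at most $r \leq i$, so $A^i$ has the advertised properties. No nonzero components occur with $i < 0$, which is precisely the statement that $A$ has standard filtration degree at least $0$.

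Finally, to extend $A$ to $\F$, I would define, for $f = \sum_{j \geq i} f_j \in \F^i$ with $f_j$ the standard-degree-$j$ component,
\[
A(f) \;:=\; \sum_{k \geq i}\; \sum_{\substack{p \geq 0,\, j \geq i \\ p + j = k}} A^p(f_j).
\]
For each fixed $k$ the inner sum ranges over the finitely many pairs $(p, j)$ with $0 \leq p \leq k - i$, so $A(f)$ is well-defined term by term and every homogeneous piece of $A(f)$ has degree $\geq i$, giving $A(f) \in \F^i$. The only delicate point in the whole argument is the global well-definedness of the eigenspace projections $A_r \mapsto A_r^j$, which follows from the coordinate-invariance of the grading operator; everything else is bookkeeping with the inequalities $\deg c \geq 0$ and $\deg D \geq -r$ and the substitution $i = 2r + j$.
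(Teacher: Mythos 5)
Your proof is correct and follows essentially the same route as the paper: establish $\mathrm{fdeg}\,A_r \ge -r$ from the standard degrees of the fiber derivatives, decompose each $A_r$ into $\deg$-homogeneous pieces $A_r^j$ of order $\le r$, and regroup by $i = 2r + j$ to obtain $A^i = \sum_{r=0}^i \nu^r A_r^{i-2r}$. The explicit term-by-term formula you give for the extension to $\F$ merely makes concrete what the paper asserts in words, so the two arguments coincide.
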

\begin{proof}
Let $A =  A_0 + \nu A_1 + \ldots$ be a natural formal differential operator on $\Q$. Since $\fdeg A_r \geq -r$, we have $\fdeg(\nu^r A_r) \geq r$, whence it follows that $\fdeg A \geq 0$. Thus we can write $A = A^0 + A^1 + \ldots$, where $\deg A^i = i$.  Each differential operator $A_r, r \geq 0,$ can be written as
\[
      A_r = \sum_{j = -r}^\infty A_r^j,
\]
where $A_r^j$ is a differential operator of order not greater than $r$ and with $\deg A_r^j = j$. Then we obtain that
\[
                  A = \sum_{r=0}^\infty \sum_{j=-r}^\infty \nu^r  A_r^j = \sum_{r=0}^\infty \sum_{i=r}^\infty \nu^r  A_r^{i-2r}  = \sum_{i=0}^\infty \sum_{r=0}^i \nu^r A_r^{i-2r},
\]
which implies (\ref{E:ari}). We see that $A^i$ is a formal differential operator of order not greater than $i$. Therefore it acts upon the space $\F$ and raises the standard filtration degree by $i$. It follows that the operator $A$ naturally extends to this space and respects the standard filtration.
\end{proof}

Let $A$ be a natural formal differential operator on $\Q$ and 
\begin{equation}\label{E:oscexp}
   K = \nu^{-1} K_{-1} + K_0 + \ldots 
\end{equation}
be an even element of $\Q$ treated as a multiplication operator with respect to the fiberwise Grassmann multiplication. Then $[K,A]$ is a natural operator on $\Q$ as well. We will consider two special cases when the series
\begin{equation}\label{E:natconj}
                    e^K A e^{-K} := \sum_{n=0}^\infty \frac{1}{n!}(\ad (K))^n A
\end{equation}
defines a natural formal differential operator on $\Q$.
\begin{lemma}\label{L:natconj}
Let $A = A_0 + \nu A_1 + \ldots$ be a natural formal differential operator on $\Q$ and $K \in \Q$ be an even element given by (\ref{E:oscexp}). Then in the following two cases the operator (\ref{E:natconj}) is natural:
\begin{enumerate}[(i)]
\item if $\fdeg K \geq 0$;
\item if $\fdeg K \geq -1$ and $\deg A_r = 0$ for all $r$.
\end{enumerate}
\end{lemma}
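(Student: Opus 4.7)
The plan is to proceed in three parts: verify that each iterate $(\ad K)^n A$ is a natural formal differential operator, argue that the infinite series $\sum_n \tfrac{1}{n!} (\ad K)^n A$ converges, and conclude that the limit is natural.

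First I would expand $K = \sum_{r \geq -1} \nu^r K_r$ and $A = \sum_{s \geq 0} \nu^s A_s$ with $A_s$ a differential operator of order at most $s$. Each $K_r$ acts as an even multiplication operator, so $[K_r, A_s]$ has order at most $s - 1$. The $\nu^N$-coefficient of $[K,A]$ is $\sum_{r+s = N,\, r \geq -1} [K_r, A_s]$, which is therefore of order at most $N$. The potentially problematic $\nu^{-1}$-term $[K_{-1}, A_0]$ vanishes because $K_{-1}$ and $A_0$ are both even functions and hence commute. Hence $[K, A]$ is natural, and by induction so is $(\ad K)^n A$ for every $n$.

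Next I would establish convergence of the exponential series in the natural sense: at each $\nu^N$-coefficient and each fixed fiberwise monomial-and-derivative label, only finitely many $n$ contribute. I would control this using the standard grading. In case (i) the hypothesis $\fdeg K \geq 0$ forces $K_{-1}$ to take values in $\bigoplus_{k \geq 2} \P_k$, giving a positive lower bound on the fiber-polynomial degree generated by each use of the $\nu^{-1}$-part of $K$; combined with the fact that each commutator with a multiplication operator reduces operator order by exactly one, matched standard-grading bookkeeping pins the admissible $n$'s at any fixed target to a finite range. In case (ii) the weaker hypothesis $\fdeg K \geq -1$ is compensated by $\deg A_r = 0$, which rigidly locates each $A_r$ at standard degree $2r$; the same counting argument then applies.

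With these two parts, the series $e^K A e^{-K}$ is a well-defined formal differential operator on $\Q$, and by the first step its $\nu^N$-coefficient is a finite sum of operators of order $\leq N$, hence is itself of order $\leq N$. The principal obstacle is the convergence argument of the second part: the two hypotheses (i) and (ii) do genuinely different bookkeeping work, and the interplay between reduction of operator order, shift of fiber-polynomial degree, and shift of $\nu$-power under $\ad(K)$ must be followed carefully to conclude finiteness at each target label.
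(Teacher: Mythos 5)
Your outline is correct in the large, but it executes the hardest part more laboriously than the paper and leaves it as an acknowledged obstacle rather than carrying it out. The paper's key move, which you do not make, is to expand $A=\sum_r\nu^rA_r$ and observe that since each application of $\ad K$ drops the differential order of $A_r$ by at least one, $(\ad K)^nA_r=0$ for $n>r$. This converts the exponential series into the double sum $e^KAe^{-K}=\sum_{r\geq0}\sum_{n=0}^r\frac{\nu^r}{n!}(\ad K)^nA_r$ with a \emph{finite} inner sum, and naturality then follows because each summand is natural and has standard filtration degree $\geq r$ (in case (i) using $\fdeg A_r\geq -r$, in case (ii) using $\deg A_r=0$ together with the bound $n\leq r$), so the outer series converges in the filtration topology. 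Your iterate-by-iterate naturality in step one is correct, including the observation that $[K_{-1},A_0]$ vanishes because $A_0$ is a multiplication operator and $K_{-1}$ is even. But without the reorganization above, your step-two accounting is genuinely delicate, and two phrasings there are off: commuting with a multiplication operator reduces the order by \emph{at least} one, not exactly one (a single $\ad$ produces terms dropping the order by various amounts), and in case (i) it is not true that ``each use of the $\nu^{-1}$-part of $K$'' produces a positive gain in fiber-polynomial degree -- when the order drops by $\geq 2$ the coefficient's fiber degree can stay flat. What is actually controlled is the standard degree $\deg$, which trades fiber degree against derivative order simultaneously; your later appeal to ``standard-grading bookkeeping'' is the right instinct, but that is precisely what the paper packages as the estimate $\fdeg\geq r$ per summand once the inner sum is made finite.
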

\begin{proof}
Since the operator $A$ is natural, we have
\[
     e^K A e^{-K} = \sum_{r=0}^\infty \sum_{n=0}^r \frac{\nu^r}{n!}(\ad (K))^n A_r.
\]
Each summand
\begin{equation}\label{E:summd}
  \frac{\nu^{r}}{n!}(\ad (K))^n A_r
\end{equation}
is a natural differential operator. Using in case $(i)$ that $\fdeg A_r \geq -r$, we see that in both cases $(i)$ and $(ii)$  the filtration degree of  (\ref{E:summd}) is at least~ $r$ and the operator $e^K A e^{-K}$ is given by a series convergent in the topology induced by the standard filtration. It is well defined on the space $\Q$, which implies the statement of the lemma.
\end{proof}

Let $\star$ be a star product with separation of variables with classifying form $\omega$ on a pseudo-K\"ahler manifold $(M, \omega_{-1})$ and $\ast$ be the star product on $TM \oplus \Pi TM$ defined by the local superpotentials (\ref{E:superpot}) as in Section~\ref{S:tmpitm}. The star product $\ast$ induces a star product on $\Q$ which will be denoted by the same symbol. It was proved in \cite{JGPsubm} that a star product with separation of variables on a split supermanifold is natural. Therefore, for $f=\nu^p f_p + \nu^{p+1}f_{p+1} +\ldots  \in \Q$ the operators $\nu^{-p} L_f$ and $\nu^{-p} R_f$ on $\Q$ are natural, extend to the space $\F$, and respect the standard filtration. It follows that the operators $L_f$ and $R_f$ extend to $\F$ as well. 
\begin{proposition}\label{P:bimod}
The superalgebra $(\Q,\ast)$ is a filtered algebra with respect to the standard filtration.
 The space $\F$ is a filtered superbimodule over the superalgebra $(\Q,\ast)$.
\end{proposition}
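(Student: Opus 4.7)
The plan is to reduce everything to a standard-degree count on a coordinate chart, where the naturality of $\ast$ provides an explicit bidifferential expansion to work with. Since both the standard filtration and the star product are local, I would first fix a contractible chart $U \subset M$. By bilinearity, together with the fact that $\nu$ commutes with $\ast$ and has $\deg \nu = 2$, the task reduces to the following key claim: for $\phi \in \P_p(U)$ and $\psi \in \P_q(U)$, one has $\phi \ast \psi \in \F^{p+q}(U)$.

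To prove the claim I would invoke the naturality of $\ast$ on the split supermanifold $TM \oplus \Pi TM$, cited from \cite{JGPsubm} in the paragraph immediately preceding the proposition. Writing
\[
\phi \ast \psi = \phi\psi + \sum_{r \geq 1}\nu^r C_r(\phi, \psi),
\]
each bidifferential operator $C_r$ has order at most $r$ in each argument. Locally
\[
C_r(\phi, \psi) = \sum_{|\alpha|, |\beta| \leq r} c_{\alpha \beta}\, \partial^\alpha \phi \cdot \partial^\beta \psi,
\]
where $\alpha$ and $\beta$ index multi-partials in all six coordinate types $z^k, \bar z^l, \eta^k, \bar\eta^l, \theta^k, \bar\theta^l$. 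Because $C_r$ restricts to a map $\P(U) \times \P(U) \to \P(U)$, the coefficients $c_{\alpha\beta}$ must lie in $\P(U) = \prod_{k \geq 0}\P_k(U)$ and therefore carry standard degree $\geq 0$. The partials in $z^k$ or $\bar z^l$ preserve the standard degree, while each partial in a fiber variable $\eta^k, \bar\eta^l, \theta^k, \bar\theta^l$ lowers it by one. Each summand thus has standard degree at least $0 + (p - r) + (q - r) = p + q - 2r$, and multiplication by $\nu^r$ (which contributes $+2r$) yields total degree $\geq p + q$, establishing the claim.

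The filtered-algebra statement on $\Q$ then follows by linearity and the standard-homogeneous decomposition $f = \sum_\alpha f^\alpha$. For the bimodule statement on $\F$, the extension of $L_f$ and $R_f$ to $\F$ recorded just before the proposition provides well-definedness of $f \ast F$ when $f \in \Q$ and $F \in \F$, and the same term-by-term degree bound transports to this extension.

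The main obstacle I foresee is the bookkeeping in the degree count: one must carefully distinguish base derivatives from fiber derivatives and keep track that the coefficients $c_{\alpha\beta}$ may themselves carry nontrivial but necessarily nonnegative fiber polynomial degree. The crucial ingredient is the positivity $c_{\alpha\beta} \in \P(U)$, which is exactly what permits the factor $\nu^r$ to compensate the worst case of $r$ fiber derivatives acting on each argument and hence forces $\deg(\nu^r C_r(\phi, \psi)) \geq \deg \phi + \deg \psi$.
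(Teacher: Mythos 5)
Your argument is correct and takes a route genuinely different from the paper's. The paper proves the filtered-algebra property by checking it on a generating set of $(\Q(U),\ast)$ — the lifts of $C^\infty(U)((\nu))$ together with the fiber variables $\eta^k,\bar\eta^l,\theta^k,\bar\theta^l$ — and this requires the explicit expressions (\ref{E:lbullf}), (\ref{E:lbartheta}), and (\ref{E:lgbareta}) for the corresponding left $\ast$-multiplication operators. You instead make a direct term-by-term degree count on the bidifferential expansion $\phi\ast\psi=\sum_r\nu^r C_r(\phi,\psi)$ that naturality furnishes, exploiting that the factor $\nu^r$ (degree $+2r$) exactly compensates the worst-case degree drop of $r$ fiber derivatives on each of the two arguments. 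This is essentially a bilateral sharpening of the unilateral count that the paper already uses to prove Lemma~\ref{L:homfdo}, and it is cleaner and more uniform than the generator-by-generator verification. What it buys is the avoidance of the somewhat delicate local computations of $L_{\bar\theta^l}$ and $L_{\bar\eta^l}$; what it costs is that you must know the restricted coefficients $c_{\alpha\beta}$ lie in $\P(U)$ and hence have standard degree $\geq 0$. This fact is indeed available — it is the content of the paper's assertion, in the paragraph preceding the proposition, that $L_f$ for $f\in\Q$ is a \emph{natural} formal differential operator on $\Q$, which by the paper's conventions means its coefficients lie in $\P(U)$, and from which the polynomiality of the $c_{\alpha\beta}$ follows by letting the first argument range over $\P(U)$ — but your one-line justification ("because $C_r$ restricts to $\P(U)\times\P(U)\to\P(U)$") glosses over this recovery of coefficients; a careful write-up should either cite the naturality-on-$\Q$ claim directly or spell out the inductive extraction of the $c_{\alpha\beta}$ from the action on monomials. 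The transport to the bimodule case via the extension to $\F$ recorded before the proposition is handled correctly.
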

\begin{proof}
It suffices to prove that the superalgebra $(\Q(U),\ast)$ is a filtered algebra for $U$ a coordinate chart and $\F(U)$ is a filtered bimodule over it. An element $f \in C^\infty(U)$  identified with its lift to $TU \oplus \Pi TU$ lies in $\Q(U)$. One can see from formula (\ref{E:lbullf}) that the operator $L_f$ leaves invariant $\F(U)$ and all filtration spaces $\F^i(U)$. The operators $\L_{\eta^k} = \eta^k$ and $L_{\theta^k} = \theta^k$ leave invariant $\F(U)$ and increase the filtration degree by 1. It follows from  formula (\ref{E:lbullf}) that
\[
      \bar\theta^l = g^{lk} (g_{kq} \bar\theta^q) =  g^{lk} \ast (g_{kq} \bar\theta^q).
\]
Using formula (\ref{E:lbartheta}) we get that
\[
            L_{\bar\theta^l} = L_{g^{lk}} L_{g_{kq} \bar\theta^q} = L_{g^{lk}} \left(g_{kq} \bar\theta^q + \nu \frac{\p}{\p\theta^k}\right).
\]
Therefore, the operator $ L_{\bar\theta^l}$  leaves invariant $\F(U)$ and increases the filtration degree by 1. We have from (\ref{E:superlr}) that
\[
    L_{\frac{\p X}{\p z^k}} = \frac{\p \Phi}{\p z^k} + \frac{1}{\nu}\left(\frac{\p^2 \Phi_{-1}}{\p z^k \p z^p} \eta^p + g_{kq}\bar\eta^q + g_{kp \bar q}\theta^p \bar \theta^q\right) + \frac{\p}{\p z^k}.
\]
Using Corollary \ref{C:fastg}, we get that
\begin{eqnarray}\label{E:lgbareta}
  L_{g_{kq}\bar\eta^q} = g_{kq}\bar\eta^q + \nu \left(\frac{\p \Phi}{\p z^k} - L_{\frac{\p \Phi}{\p z^k}}\right) + \nu \frac{\p}{\p z^k} +   \frac{\p^2 \Phi_{-1}}{\p z^k \p z^p} \eta^p -\nonumber\\
 \eta^p L_{ \frac{\p^2 \Phi_{-1}}{\p z^k \p z^p}} + g_{kp \bar q}\theta^p \bar \theta^q - \theta^p L_{\Gamma_{kp}^s}\left(g_{sq}\bar\theta^q + \nu \frac{\p}{\p \theta^s}\right),
\end{eqnarray}
where $\Gamma_{kp}^s = g_{kp \bar q}g^{\bar  qs}$ is the Christoffel symbol of the Levi-Civita connection.

Given a function $f \in C^\infty(U)$, we see from  (\ref{E:lbullf}) that the operator $f - L_f$ increases the filtration degree by 1. It follows that the operator $L_{g_{kq} \bar\eta^q}$ leaves invariant $\F(U)$ and increases the filtration degree by 1. 
We have from formula (\ref{E:lbullf}) that
\[
     \bar\eta^l = g^{lk} (g_{kq} \bar\eta^q) =  g^{lk} \ast (g_{kq} \bar\eta^q) \mbox{, whence }  L_{\bar\eta^l } = L_{g^{lk}} L_{g_{kq} \bar\eta^q}.
\]
It implies that the operator $L_{\bar\eta^l }$ also leaves invariant $\F(U)$ and increases the filtration degree by 1.  The elements of $C^\infty(U)((\nu))$ and the variables $\eta,\bar\eta,\theta,\bar\theta$ generate the algebra $(\Q(U), \ast)$.  Therefore, $(\Q(U),\ast)$ is a filtered algebra and $\F(U)$ is a filtered left supermodule over $(\Q(U),\ast)$. Similar statements can be proved for the graded right $\ast$-multiplication operators which imply that $\F(U)$ is also a filtered right supermodule over $(\Q(U),\ast)$.
\end{proof}

We will use the symbol $\ast$ to denote the left and the right actions of the algebra $(\Q, \ast)$ on $\F$. 
Let $\J_r$ be the right submodule of $\F$ whose elements written in local coordinates are of the form
\[
                   u = \eta^k A_k + \theta^k  B_k  = \eta^k \ast A_k + \theta^k \ast B_k,
\]
where $A_k, B_k \in \F$, and let $\J_l$ be the left submodule of $\F$ whose elements are locally of the form
\[
                   u = C_l \bar\eta^l + D_l \bar\theta^l  = C_l \ast\bar\eta^l + D_l \ast\bar\theta^l,
\]
where $C_l, D_l \in \F$. The definitions of the submodules $\J_l$ and $\J_r$ do not depend on the choice of local holomorphic coordinates.

\section{The Lie superalgebra $\langle \chi,\tilde\chi,\sigma\rangle$ }

Let $(M, \omega_{-1})$ be a pseudo-K\"ahler manifold and $\star$ be a star product with separation of variables on $M$ with classifying form $\omega$. Recall that the function $\psi = \nu^{-1}\psi_{-1}$ on $TM \oplus \Pi TM$  was defined by (\ref{E:psimin}). Let $\ast$ be the star product with separation of variables on $TM \oplus \Pi TM$ determined by the product $\star$  and the function $\psi$, as described in Section~\ref{S:tmpitm}. In the rest of this paper we will use global functions 
\[
\varphi_{-1}, \varphi = \nu^{-1}\varphi_{-1}, \chi, \tilde\chi, \mbox{ and }\hat\omega
\]
on $TM \oplus \Pi TM$ given in local coordinates by the formulas
\begin{eqnarray}\label{E:global}
   \varphi_{-1} = \eta^k g_{kl} \bar\eta^l, \chi = \nu^{-1}\eta^k g_{kl} \bar\theta^l, \tilde\chi = \nu^{-1}\theta^k g_{kl} \bar\eta^l, \\
    \mbox{ and } \hat\omega = i\theta^k\frac{\p^2 \Phi}{\p z^k \p \bar z^l} \bar\theta^l, \nonumber
\end{eqnarray}
where $\Phi$ is a potential of $\omega$. Denote by $\sigma$ the $\ast$-supercommutator of the odd functions $\chi$ and $\tilde \chi$,
\[
          \sigma = \left[\chi,\tilde\chi \right]_\ast.
\]
The formal functions $\chi, \bar\chi$, and $\sigma$ are formal analogues of symbols of the operators $\bar \p, \bar \p^\ast$, and of the Laplace operator, respectively, on the $(0,\ast)$-forms with values in a holomorphic line bundle on $M$ used in the heat kernel proof of the index theorem for the Dirac operator $\bar\p + \bar\p^\ast$ (see \cite{BGV}).

\begin{proposition}
The formal functions $\chi$ and $\tilde\chi$ are nilpotent with respect to the star product $\ast$, $\chi \ast \chi = 0$ and $\tilde\chi\ast\tilde\chi=0$. Equivalently, they satisfy the supercommutator relations $[\chi,\chi]_\ast = 0$ and $[\tilde\chi,\tilde\chi]_\ast = 0$. The formal function $\sigma$ is given by the formula
\begin{equation}\label{E:symbsig}
       \sigma = \varphi + i\hat\omega.
\end{equation}
In particular, $\sigma \in \Q^0$.
\end{proposition}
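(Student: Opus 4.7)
The plan is: (a) establish the nilpotencies $\chi\ast\chi=0$ and $\tilde\chi\ast\tilde\chi=0$ via the explicit formulas (\ref{E:superlr}); (b) compute $\chi\ast\tilde\chi$ and $\tilde\chi\ast\chi$ separately; (c) observe that the pointwise super-anticommutator $\chi\tilde\chi+\tilde\chi\chi$ vanishes because $\chi,\tilde\chi$ are odd; (d) check the filtration statement.

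For (a), the key observation is that $\p X/\p\theta^k=\nu^{-1}g_{kl}\bar\theta^l$, so $\chi=\eta^k\cdot(\p X/\p\theta^k)$, and since $\eta^k$ is holomorphic, $\chi=\eta^k\ast(\p X/\p\theta^k)$. Then $L_\chi=\eta^k L_{\p X/\p\theta^k}=\eta^k(\nu^{-1}g_{kl}\bar\theta^l+\p/\p\theta^k)$ by (\ref{E:superlr}). Applying this to $\chi$, the multiplicative piece yields the pointwise square of the odd element $\chi$, which vanishes by the standard super-antisymmetry argument, and the derivative piece vanishes since $\chi$ has no $\theta$-dependence. For $\tilde\chi$, the dual approach uses $\p X/\p\bar\theta^l=-\nu^{-1}g_{kl}\theta^k$ and the antiholomorphicity of $\bar\eta^l$ to write $\tilde\chi=-(\p X/\p\bar\theta^l)\ast\bar\eta^l$; this yields $R_{\tilde\chi}=-\bar\eta^l R_{\p X/\p\bar\theta^l}$, and then $\tilde\chi\ast\tilde\chi=-R_{\tilde\chi}\tilde\chi=0$ for the same two reasons.

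For (b), the easy direction is $\chi\ast\tilde\chi=L_\chi\tilde\chi=\chi\tilde\chi+\eta^k\,\p\tilde\chi/\p\theta^k=\chi\tilde\chi+\varphi$. There is no clean analogous factorization for $L_{\tilde\chi}$, so I would solve the explicit expansion of $\p X/\p z^k$ from (\ref{E:superlr}) for the summand $\nu^{-1}g_{kl}\bar\eta^l$, obtaining
\[
\tilde\chi=\theta^k\bigl[\p X/\p z^k-\p\Phi/\p z^k-\nu^{-1}\tfrac{\p^2\Phi_{-1}}{\p z^k\p z^p}\eta^p-\nu^{-1}g_{kp\bar q}\theta^p\bar\theta^q\bigr].
\]
Since $\theta^k$ is holomorphic, $L_{\tilde\chi}$ factors as $\theta^k$ times the corresponding difference of $L$-operators. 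The first operator is $L_{\p X/\p z^k}=\p X/\p z^k+\p/\p z^k$; the remaining three, applied to $\chi$, are evaluated via Proposition \ref{P:superpot} (only $r=0,1$ contribute since $\chi$ is linear in $\eta$) together with the identity $L_{\bar\theta^q}=L_{g^{qs}}(g_{sr}\bar\theta^r+\nu\,\p/\p\theta^s)$ used in the proof of Proposition \ref{P:bimod}. After expanding $(\p X/\p z^k)\chi$ into its four constituents, three pairs of multiplicative terms cancel against the corresponding pieces of the three subtracted operators; the term $\theta^k\,\p\chi/\p z^k$ cancels against the $\eta$-derivative piece of the $L_{\nu^{-1}(\p^2\Phi_{-1}/\p z^k\p z^p)\eta^p}$ contribution by the symmetry $g_{ka\bar b}=g_{ak\bar b}$; and the residual cubic-odd term proportional to $g_{kp\bar q}g^{qa}g_{ar\bar t}\theta^k\theta^p\bar\theta^r\bar\theta^t$ vanishes because $g_{kp\bar q}$ is symmetric in $(k,p)$ while $\theta^k\theta^p$ is antisymmetric. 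What remains is $L_{\tilde\chi}\chi=\tilde\chi\chi-\theta^k\frac{\p^2\Phi}{\p z^k\p\bar z^l}\bar\theta^l=\tilde\chi\chi+i\hat\omega$, using the definition of $\hat\omega$.

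Adding the two products and invoking (c) gives $\sigma=(\chi\tilde\chi+\tilde\chi\chi)+\varphi+i\hat\omega=\varphi+i\hat\omega$. For (d), $\varphi=\nu^{-1}\eta^k g_{kl}\bar\eta^l$ has standard degree $0$ ($\nu^{-1}$ of degree $2$ times $\eta\bar\eta$ of degree $2$), and the leading $\nu^{-1}$ coefficient of $\hat\omega$ is $i\theta^k g_{kl}\bar\theta^l$ of degree $2$, so $i\hat\omega$ is of degree $0$ in leading order with higher-$\nu$ corrections of degree $\geq 2$; hence $\sigma\in\Q^0$. The main obstacle is the bookkeeping in the $L_{\tilde\chi}\chi$ computation, where several layers of cancellation must be verified using the precise structure of the superpotential $X$, the identity for $L_{\bar\theta^q}$, and the indicated symmetry properties of the metric derivatives.
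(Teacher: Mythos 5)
Your proof is correct, and it reaches the same cancellations as the paper's, but with two variations worth noting. For $\tilde\chi\ast\tilde\chi=0$ you compute with $R_{\tilde\chi}=-\bar\eta^l R_{\p X/\p\bar\theta^l}$ instead of $L_{\tilde\chi}$, which is a clean dual of the $L_\chi$ argument and dispenses with the long expansion of $L_{\tilde\chi}$; the paper instead evaluates $L_{\tilde\chi}\tilde\chi$, producing a term proportional to $g_{kp\bar q}\theta^k\theta^p\bar\eta^q$ that is killed by the symmetry of $g_{kp\bar q}$ against the antisymmetry of $\theta^k\theta^p$. For $\tilde\chi\ast\chi$ you unpack $L_{\tilde\chi}$ and match the multiplicative and $\eta$-derivative pieces term by term; the paper instead records the auxiliary identities $(f-L_f)\chi=-\frac{\p f}{\p\bar z^q}\bar\theta^q$ and $(f-L_f)\tilde\chi=0$ first, which is essentially what your pairwise cancellations derive, but packaged so the computation closes in two lines. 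Two small caveats in your bookkeeping: the cancellation of $\theta^k\,\p\chi/\p z^k$ against $-\nu^{-1}\theta^k\eta^pg_{kp\bar q}\bar\theta^q$ is a literal identity after renaming dummy indices, so the symmetry $g_{ka\bar b}=g_{ak\bar b}$ you invoke is not actually needed there; and the residual contributed by the $\nu^{-1}g_{kp\bar q}\theta^p\bar\theta^q$ summand is $-\nu^{-1}\theta^k\theta^p\,(D^u\Gamma^s_{kp})\,g_{s\bar q}g_{u\bar b}\,\bar\theta^q\bar\theta^b$, a curvature-type expression with more than the single contraction pattern you name, but your conclusion still holds because $D^u\Gamma^s_{kp}$ is symmetric in $(k,p)$ and meets the antisymmetric $\theta^k\theta^p$. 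The appeal to pointwise super-anticommutativity to cancel $\chi\tilde\chi+\tilde\chi\chi$ and the $\deg$ bookkeeping for $\sigma\in\Q^0$ agree with the paper.
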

\begin{proof}
Let $U \subset M$ be a contractible coordinate chart. We get from formulas  (\ref{E:lbartheta})  and (\ref{E:lgbareta}) that in local coordinates
\begin{eqnarray*}
  L_\chi = \chi + \eta^k \frac{\p}{\p\theta^k} \mbox{ and } L_{\tilde\chi} = \tilde\chi + \theta^k\left(\frac{\p \Phi}{\p z^k} - L_{\frac{\p \Phi}{\p z^k}}\right) +\\
   \theta^k \frac{\p}{\p z^k} +
  \frac{1}{\nu} \theta^k\eta^p\left( \frac{\p^2 \Phi_{-1}}{\p z^k \p z^p} -  L_{ \frac{\p^2 \Phi_{-1}}{\p z^k \p z^p}} \right).
\end{eqnarray*}
We see from these formulas that
\begin{equation}\label{E:chitilchi}
\chi \ast \chi = 0 \mbox{  and } \chi \ast \tilde\chi = \chi\tilde\chi + \varphi.
\end{equation}
According to formula (\ref{E:lbullf}), for any function $f \in C^\infty(U)$ we have 
\[
   (f - L_f)\chi = - (D^k f) g_{kq} \bar \theta^q = - \frac{\p f}{\p \bar z^q}\bar\theta^q  \mbox{ and } (f - L_f)\tilde\chi = 0.
\]
We obtain from these formulas that
\begin{equation}\label{E:tilchichi}
\tilde\chi \ast\chi = \tilde\chi \chi + i \hat\omega \mbox{ and }  
\tilde\chi \ast \tilde\chi = \theta^k \theta^p g_{kp \bar q} \bar\eta^q = 0.  
\end{equation}
Formula (\ref{E:symbsig}) follows from (\ref{E:chitilchi}) and (\ref{E:tilchichi}).
\end{proof}
The functions $\chi, \bar\chi$, and $\sigma$ form a basis in the Lie superalgebra~$\langle \chi,\tilde\chi,\sigma\rangle$ equipped with the supercommutator $[\cdot,\cdot]_\ast$. The element $\sigma$ generates its supercenter.  We define an element $S = \nu^{-1} S_{-1} \in \Q$ such that
\[
     S_{-1} = - \varphi_{-1} + \psi_{-1} \mbox{ and thus } S = -\varphi + \psi.
\]
The $\deg$-homogeneous component of $\sigma$ of degree zero is $-S$. One can show by a direct calculation that
\begin{eqnarray}\label{E:oplsigma}
L_\sigma = \frac{1}{\nu} \eta^k g_{kl} \bar \eta^l + \eta^k \left(\frac{\p \Phi}{\p z^k} - L_{\frac{\p \Phi}{\p z^k}}\right ) + \hskip 3cm\nonumber \\
\frac{1}{\nu}\eta^k \eta^p \left(\frac{\p^2 \Phi_{-1}}{\p z^k \p z^p} - L_{\frac{\p^2 \Phi_{-1}}{\p z^k \p z^p}}\right)  +  \eta^k \frac{\p}{\p z^k} + \frac{1}{\nu}\eta^k g_{kp\bar q} \theta^p \bar \theta^q \\
- \theta^k \left(\frac{1}{\nu} \eta^p L_{\Gamma^s_{kp}} + L_{\frac{\p^2 \Phi}{\p z^k \p \bar z^l} g^{ls}}\right)\left(g_{sq} \bar \theta^q + \nu \frac{\p}{\p \theta^s} \right).\nonumber
\end{eqnarray}
Since $\fdeg \sigma = 0$, it follows from Proposition \ref{P:bimod} that $\mathrm{fdeg} L_\sigma =0$ and $\mathrm{fdeg} R_\sigma =0$. One can see from (\ref{E:oplsigma}) that the range of the operator $L_\sigma$ lies in the submodule ~$\J_r$. Similarly,  the range of the operator $R_\sigma$ lies in the submodule ~$\J_l$.  The series
\[
     e^{\pm S} := \sum_{r \geq 0} \frac{\nu^{-r}}{r!} \left(\pm S_{-1}\right)^r
\]
are well defined elements of $\F$. We have $\deg S = \deg e^{\pm S} =0$.  Denote by $\E$ and $\bar \E$ the global holomorphic and antiholomorphic fiberwise Euler operators on $TM \oplus \Pi TM$, respectively. In local coordinates,
\[
      \E = \eta^p \frac{\p}{\p \eta^p} + \theta^p \frac{\p}{\p \theta^p} \mbox{ and } \bar\E = \bar\eta^q \frac{\p}{\p \bar\eta^q} + \bar\theta^q \frac{\p}{\p \bar\theta^q}. 
\]
Let $A^0$ and $B^0$ denote the $\deg$-homogeneous components of degree zero of the operators $L_\sigma$ and $R_\sigma$, respectively. One can see from (\ref{E:oplsigma}) and the corresponding formula for $R_\sigma$ that
\begin{equation}\label{E:azero1}
      A^0 =   - S - \E = e^{-S} (-\E) e^S \hskip 3cm \\ \mbox{ and }
  B^0 =   - S - \bar\E = e^{-S} (-\bar\E) e^S.
\end{equation}

\section{The subspace $\K$}\label{S:subk}

\begin{lemma}\label{L:luniq}
The following statements hold.
\begin{enumerate}[(i)]
\item If $F \in \J_r$ and $L_\sigma F =0$, then $F=0$. 
\item If $F \in \J_l$ and $R_\sigma F =0$, then $F=0$. 
\item If $F \in \J_l + \J_r$ and $(L_\sigma + R_\sigma) F=0$, then $F=0$.
\end{enumerate}
\end{lemma}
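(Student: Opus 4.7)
\emph{Plan.} The strategy for all three parts is identical: assume $F \neq 0$, pass to its lowest $\deg$-homogeneous component $F_i$ (with $i = \fdeg F$), extract the $\deg$-$0$ piece of the given equation, rewrite it using formula~(\ref{E:azero1}), and conclude $F_i = 0$ by injectivity of an Euler operator --- contradicting the choice of $i$.

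First I would verify that the $\deg$-decomposition $F = F_i + F_{i+1} + \ldots$ respects the submodules involved. Since $\J_r$ is the sum of pointwise-multiplication images of $\eta^k$ and $\theta^k$ (each $\deg$-homogeneous of degree $1$), each $F_j$ again lies in $\J_r$; analogously for $\J_l$ and $\J_l + \J_r$. By Proposition~\ref{P:bimod}, $L_\sigma$ and $R_\sigma$ respect the standard filtration, and by Lemma~\ref{L:homfdo} they split as $L_\sigma = A^0 + A^1 + \ldots$ with $\deg A^k = k$, and likewise for $R_\sigma$. The degree-$i$ component of the equation at hand therefore reduces to $A^0 F_i = 0$ (resp.\ $B^0 F_i = 0$, $(A^0 + B^0)F_i = 0$).

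For (i), formula~(\ref{E:azero1}) rewrites $A^0 F_i = 0$ as $\E(e^S F_i) = 0$. The element $e^S$ is even, so pointwise (Grassmann) multiplication by it commutes with $\eta^k$ and $\theta^k$ and hence preserves $\J_r$; thus $e^S F_i \in \J_r$. Every monomial appearing in an element of $\J_r$ carries at least one factor $\eta^k$ or $\theta^k$, so $\E$ acts with eigenvalues $\geq 1$ on $\J_r$ and is injective there. This forces $e^S F_i = 0$, and invertibility of $e^S$ gives $F_i = 0$. Part (ii) is the mirror argument with $R_\sigma$, $B^0$, $\bar\E$, $\J_l$ in place of $L_\sigma$, $A^0$, $\E$, $\J_r$.

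For (iii) I would first note that $\E S = \bar\E S = S$, since each monomial of $S_{-1} = -\varphi_{-1} + \psi_{-1}$ is of bidegree $(1,1)$ in the fiber variables; combined with~(\ref{E:azero1}) this yields $A^0 + B^0 = e^{-S}(-\E - \bar\E)e^S$. The vanishing equation becomes $(\E + \bar\E)(e^S F_i) = 0$ with $e^S F_i \in \J_l + \J_r$. Now $\J_l + \J_r$ is precisely the set of elements whose local expansion has no fiber-constant monomial: any monomial containing $\eta$ or $\theta$ lies in $\J_r$, and any remaining monomial necessarily contains $\bar\eta$ or $\bar\theta$ and hence lies in $\J_l$. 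Consequently $\E + \bar\E$ acts with eigenvalues $\geq 1$ on $\J_l + \J_r$, is injective, and the contradiction follows. I do not foresee a serious obstacle; the chief point requiring care is the compatibility of the $\deg$-decomposition with $\J_l$, $\J_r$, and $\J_l + \J_r$, which is what guarantees that the lowest $\deg$-piece of the equation really is $A^0 F_i$ (resp.\ $B^0 F_i$, $(A^0 + B^0) F_i$) and nothing else.
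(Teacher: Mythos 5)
Your proof is correct and follows essentially the same route as the paper: pass to the lowest $\deg$-homogeneous component, use formula~(\ref{E:azero1}) to write the degree-zero equation as $\E(e^S F^p)=0$ (resp.\ $\bar\E$, $\E+\bar\E$), and conclude from injectivity of the relevant Euler operator on $\J_r$ (resp.\ $\J_l$, $\J_l+\J_r$). You merely spell out more explicitly the implicit facts (that the $\deg$-decomposition respects these submodules, and that the Euler operators have eigenvalues $\geq 1$ there), which is entirely in the spirit of the paper's brief argument.
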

\begin{proof}
 Assume that $F \in \J_r$ satisfies the equation $L_\sigma F = 0$ and is nonzero. Then there exists $p \in \Z$ such that
$F = \sum_{i = p}^\infty F^i$, where $\deg F^i = i$ and $F^p$ is nonzero. We have $A^0 F^p = 0$, where $A^0$ is the zero degree component of $L_\sigma$. We see from (\ref{E:azero1}) that
\begin{equation}\label{E:esfp}
      \E \left(e^S F^p \right) = 0.
\end{equation}
Since $F^p$ is a nonzero element of $\J_r$, we get that $e^S F^p$ is also a nonzero element of $\J_r$, which contradicts (\ref{E:esfp}). Statements (ii) and (iii) can be proved similarly.
\end{proof}

The ranges of the operators $\E$ and $\bar\E$ lie in the spaces $\J_r$ and $\J_l$, respectively. 
The restriction of the operator $\E$ to $\J_r$ is invertible. Denote by $\E^{-1} : \J_r \to \J_r$ its inverse. One can define similarly the operators $\bar\E^{-1}: \J_l \to \J_l$ and $(\E + \bar\E)^{-1} : \J_l + \J_r \to \J_l + \J_r$.

\begin{lemma}\label{L:uniqsol}
 Given $G \in \ker\E\subset \F$ and $H \in \J_r$, the equation
\begin{equation}\label{E:neweq}
     \E e^S F = H
\end{equation}
has a unique solution $F \in \F$ such that $F-G\in\J_r$,
\begin{equation}\label{E:neweq1}
      F = e^{-S}(G + \E^{-1} H).
\end{equation}
\end{lemma}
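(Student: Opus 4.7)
The plan is to reduce to the triangular problem $\E \tilde F = H$ by the change of unknown $\tilde F := e^S F$. Since $e^{\pm S}$ acts by multiplication by a function of standard degree zero, and since $\J_r$ — being the submodule of elements of strictly positive fiberwise $(\eta,\theta)$-content — is closed under multiplication by arbitrary elements of $\F$, the side condition $F - G \in \J_r$ becomes $\tilde F - e^S G \in \J_r$. In the new unknown the problem becomes: find $\tilde F \in e^S G + \J_r$ with $\E \tilde F = H$.

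The key structural observation is the direct sum decomposition $\F = \ker \E \oplus \J_r$: the Euler operator $\E$ simply counts the $(\eta,\theta)$-degree, so $\ker \E$ picks out the part independent of $\eta^k,\theta^k$ and $\J_r$ is exactly the complementary part, in agreement with its local description. Crucially, $S = -\varphi + \psi$ itself lies in $\J_r$, since in local coordinates $\varphi = \eta^k(\nu^{-1}g_{kl}\bar\eta^l)$ and $\psi = \theta^k(\nu^{-1}g_{kl}\bar\theta^l)$. Consequently $e^{S}-1 = \sum_{n\geq 1} S^n/n!$ also lies in $\J_r$, and for $G \in \ker \E$ the decomposition of $e^S G$ into its $\ker\E$ and $\J_r$ summands is precisely $e^S G = G + (e^S-1)G$, yielding $\E^{-1}\E(e^S G) = (e^S-1)G$.

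From here the argument reduces to one line of algebra. Writing any candidate solution as $\tilde F = e^S G + V$ with $V \in \J_r$, the equation $\E \tilde F = H$ becomes $\E V = H - \E(e^S G)$; both sides lie in $\J_r$, and because $\E|_{\J_r}$ is invertible this determines $V$ uniquely as $V = \E^{-1}H - (e^S-1)G$. Substituting back gives $\tilde F = G + \E^{-1} H$ and hence the claimed formula $F = e^{-S}(G + \E^{-1}H)$; existence and uniqueness are established simultaneously.

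The only real nuance I expect to have to handle carefully is the convergence bookkeeping that justifies the manipulations of $e^{\pm S}$: because $S$ has standard degree zero while every monomial appearing in a power $S^n$ carries strictly positive $(\eta,\theta)$-content, the series for $e^{\pm S}$ converge in the topology induced by the standard filtration and act term-by-term on $\F$. Once this is in place, the decomposition $\F = \ker \E \oplus \J_r$ and the invertibility of $\E$ on $\J_r$ make both the verification and the uniqueness trivial.
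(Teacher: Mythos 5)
Your proof is correct and follows essentially the same route as the paper's: reduce to $\E\,\tilde F = H$ via $\tilde F = e^S F$, use $S\in\J_r$ to translate the side condition into $\tilde F - G\in\J_r$, and invert $\E$ on $\J_r$. One small correction to your closing remark: the terms $\nu^{-r}S_{-1}^r/r!$ are all of standard degree zero, so the series for $e^{\pm S}$ does \emph{not} converge in the standard-filtration topology; it is a well-defined element of $\F$ simply because, for each fixed $\deg$-degree, the defining expansion of $\F$ permits infinitely many negative powers of $\nu$.
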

\begin{proof}
Clearly, (\ref{E:neweq1}) is a solution of (\ref{E:neweq}). Assume that $F$ is a solution of (\ref{E:neweq}) satisfying $F-G\in\J_r$. Since $S \in \J_r$, we have $e^{S}F-G\in\J_r$. Therefore, $e^{S}F-G = \E^{-1} H$, which implies the uniqueness.
\end{proof}
Given $G \in \ker \bar \E \subset \F$ and $H \in \J_l$, the unique solution of the equation $ \bar\E e^S F = H$ is $F = e^{-S}(G + \bar\E^{-1} H)$. Also, given $G \in C^\infty(M)((\nu))$ and $H \in \J_l + \J_r$, one can find the unique solution of the equation $(\E + \bar \E) e^S F = H$.
\begin{proposition}\label{P:rlift}
Given an element $G \in \ker\E \subset \F$, there exists a unique element $F \in \F$ such that
\[
                 L_\sigma F = 0 \mbox{ and } F - G \in \J_r.
\]
If $G = \sum_{i \geq p} G^i$, where $\deg G^i = i$ and $G^p$ is nonzero, then $F = \sum_{i \geq p} F^i$, where $\deg F^i = i$ and $F^p = e^{-S}G^p$.
\end{proposition}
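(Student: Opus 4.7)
The plan is to deduce uniqueness from Lemma~\ref{L:luniq}(i) and to build $F$ by induction on the standard degree, inverting the zero-degree part of $L_\sigma$ at each step via Lemma~\ref{L:uniqsol}. By Lemma~\ref{L:homfdo} write $L_\sigma = A^0 + A^1 + A^2 + \cdots$ with $\deg A^i = i$, and recall from (\ref{E:azero1}) that $A^0 = e^{-S}(-\E)e^S$. If $F = \sum_i F^i$ with $\deg F^i = i$, the equation $L_\sigma F = 0$ decouples degree-by-degree into
\[
A^0 F^i = -\sum_{j\geq 1} A^j F^{i-j}, \qquad \text{equivalently} \qquad \E\bigl(e^S F^i\bigr) = e^S \sum_{j\geq 1} A^j F^{i-j}.
\]

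Uniqueness is immediate: if $F_1, F_2$ both satisfy the stated conditions, then $F_1 - F_2 \in \J_r$ and $L_\sigma(F_1 - F_2) = 0$, so Lemma~\ref{L:luniq}(i) gives $F_1 = F_2$.

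The heart of the existence argument is verifying that the right-hand side of the recursion lies in $\J_r$ at every stage, so that Lemma~\ref{L:uniqsol} applies. This requires the stronger statement that each homogeneous component $A^j$ individually has range in $\J_r$, not merely the total operator $L_\sigma$. Inspecting~(\ref{E:oplsigma}), every summand of $L_\sigma$ carries either $\eta^k$ or $\theta^k$ as its outermost left factor, while the internal operators $L_{\p\Phi/\p z^k}$, $L_{\Gamma^s_{kp}}$, and so on, are applied to the right of these leading factors; extracting $\deg$-homogeneous pieces preserves this structure, so each $A^j F \in \J_r$ for all $F \in \F$. Moreover, since $S_{-1} = -\varphi_{-1} + \psi_{-1}$ belongs to $\J_r$ and $\J_r$ is closed under Grassmann multiplication by elements of $\J_r$, the functions $e^{\pm S}$ preserve $\J_r$ under Grassmann multiplication; hence $e^S \sum_{j\geq 1}A^j F^{i-j} \in \J_r$ as soon as the $F^{i-j}$ are known. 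This is the key technical obstacle, and it is resolved purely by inspection of the explicit formula for $L_\sigma$.

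To execute the induction, note first that $\E$ preserves the $\deg$-grading, so $\E G = 0$ implies $G^i \in \ker \E$ for every $i$. For $i < p$ one has $G^i = 0$, so any candidate $F^i$ must lie in $\J_r$ and satisfy $\E(e^S F^i) = 0$; since $e^S F^i \in \J_r$ and $\E$ is invertible on $\J_r$, we conclude $F^i = 0$. At $i = p$ the recursion has vanishing right-hand side, and Lemma~\ref{L:uniqsol} with $H = 0$ and initial datum $G^p$ yields the unique $F^p = e^{-S}G^p$. For $i > p$ apply Lemma~\ref{L:uniqsol} with $H_i := e^S \sum_{j\geq 1} A^j F^{i-j} \in \J_r$ and initial datum $G^i \in \ker\E$ to obtain the unique $F^i$ with $F^i - G^i \in \J_r$ satisfying the recursion. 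Assembling $F = \sum_{i\geq p} F^i$ produces an element of $\F$ with $L_\sigma F = 0$ and $F - G \in \J_r$, and the advertised formula $F^p = e^{-S}G^p$ is the content of the base case.
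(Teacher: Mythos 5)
Your proof is correct and follows essentially the same route as the paper: decompose $L_\sigma = A^0 + A^1 + \cdots$ by $\deg$, rewrite $L_\sigma F = 0$ as the recursion $\E(e^S F^k) = e^S\sum_{j\ge 1}A^j F^{k-j}$, solve degree-by-degree via Lemma~\ref{L:uniqsol}, and deduce uniqueness from Lemma~\ref{L:luniq}(i). The only difference is that you spell out why each homogeneous piece $A^j$ has range in $\J_r$ and why $e^{\pm S}$ preserves $\J_r$ under Grassmann multiplication, facts the paper asserts without elaboration.
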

\begin{proof}
Assume that $G = \sum_{i \geq p} G^i$ and $G^p$ is nonzero. We will be looking for $F$ of the form $F = \sum_{i \geq p} F^i$ with $F^i - G^i \in \J_r$ for $i \geq p$.
Writing $L_\sigma = A^0 + A^1 + \ldots$, where  $\deg A^i = i$ and $A^0 = e^{-S}(-\E) e^S$, we rewrite the equation $L_\sigma F = 0$ as the system of equations
\begin{equation}\label{E:lzerok1}
       \E e^S F^k = e^S \sum_{i = 1}^{k-p} A^i F^{k-i}, k \geq p,  
\end{equation}
where the right-hand side of (\ref{E:lzerok1}) is zero when $k=p$. 
The condition that $F - G \in \J_r$ is equivalent to the condition that $F^k - G^k \in \J_r$ for all $k$. By Lemma \ref{L:uniqsol}, $F^p = e^{-S}G^p$. Since the range of each operator $A^i$ lies in~$\J_r$, the terms $e^S F^k$ for $k > p$ can be uniquely determined by induction from system (\ref{E:lzerok1}) using Lemma \ref{L:uniqsol}. The solution $F$ is unique by Lemma~\ref{L:luniq}.
\end{proof}
One can prove similarly the following two propositions.
\begin{proposition}
Given an element $G \in \ker\bar\E \subset \F$, there exists a unique element $F \in \F$ such that
\[
                 R_\sigma F = 0 \mbox{ and } F - G \in \J_l.
\]
If $G = \sum_{i \geq p} G^i$, where $\deg G^i = i$ and $G^p$ is nonzero, then $F = \sum_{i \geq p} F^i$, where $\deg F^i = i$ and $F^p = e^{-S}G^p$.
\end{proposition}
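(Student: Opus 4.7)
The plan is to mirror the proof of Proposition \ref{P:rlift} verbatim, swapping $L_\sigma \leadsto R_\sigma$, $\E \leadsto \bar\E$, and $\J_r \leadsto \J_l$ throughout. The reason this substitution is legitimate is that every ingredient used in the $L_\sigma$-proof has a right-handed analogue that has already been established or flagged in the text: uniqueness is Lemma \ref{L:luniq}(ii); solvability of the basic equation $\bar\E e^S F = H$ for $H \in \J_l$, with the explicit formula $F = e^{-S}(G + \bar\E^{-1}H)$, is stated right after Lemma \ref{L:uniqsol}; and the fact that the range of $R_\sigma$ lies in $\J_l$ is asserted in the paragraph after formula (\ref{E:oplsigma}), which is precisely what makes the $\deg$-homogeneous components $B^i$ (for $i\geq 1$) land in $\J_l$.

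Concretely, I would first write $G = \sum_{i\geq p} G^i$ with $\deg G^i = i$ and $G^p \neq 0$, and look for $F = \sum_{i\geq p} F^i$ with $\deg F^i = i$ and $F^i - G^i \in \J_l$ for every $i$ (this condition is equivalent to $F - G \in \J_l$). Next I would decompose $R_\sigma = B^0 + B^1 + B^2 + \ldots$ into $\deg$-homogeneous pieces, using (\ref{E:azero1}) to identify $B^0 = e^{-S}(-\bar\E)e^S$ and noting that each $B^i$ for $i\geq 1$ maps $\F$ into $\J_l$. The equation $R_\sigma F = 0$ then splits by degree into
\begin{equation*}
\bar\E\, e^S F^k \;=\; e^S \sum_{i=1}^{k-p} B^i F^{k-i}, \qquad k \geq p,
\end{equation*}
where the right-hand side is zero for $k = p$ and lies in $\J_l$ for $k > p$ because each $B^i$ has range in $\J_l$ and $e^S$ preserves $\J_l$ (as $e^S - 1$ acts by $\ast$-multiplication, and in fact $\J_l$ is a left $\ast$-submodule).

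The base case gives $F^p = e^{-S} G^p$ directly from the right-handed version of Lemma \ref{L:uniqsol} with $H = 0$ and seed $G^p \in \ker\bar\E$. For the inductive step, assume $F^p, \ldots, F^{k-1}$ have been constructed; apply the same right-handed existence-and-uniqueness statement with $G$ replaced by $G^k \in \ker\bar\E$ and $H$ equal to the (now known) right-hand side, producing the unique $F^k$ with $F^k - G^k \in \J_l$. This yields the asserted series, and the leading-term formula $F^p = e^{-S} G^p$ is built in.

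Uniqueness of the whole $F$ follows from Lemma \ref{L:luniq}(ii): if $F_1$ and $F_2$ are two solutions, then $F_1 - F_2 \in \J_l$ and $R_\sigma(F_1 - F_2) = 0$, forcing $F_1 = F_2$. The only step that requires more than symbol-pushing is checking that each $B^i$ (for $i \geq 1$) indeed has range in $\J_l$; I expect this to be the main, though minor, obstacle, and it is handled by writing down the right-handed counterpart of (\ref{E:oplsigma}) and inspecting that every summand is either a pointwise product with $\bar\eta^l$ or $\bar\theta^l$ (times another factor in $\F$) or an operator whose action factors through $\bar\eta$- or $\bar\theta$-multiplication, exactly as in the $L_\sigma$ case.
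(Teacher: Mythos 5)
Your proof is correct and coincides with the paper's intended argument: the paper explicitly remarks ``One can prove similarly the following two propositions,'' and your mirrored induction with $R_\sigma$, $\bar\E$, $\J_l$, Lemma~\ref{L:luniq}(ii), and the right-handed version of Lemma~\ref{L:uniqsol} is exactly that mirror. One small slip in a parenthetical: $e^S$ preserves $\J_l$ because it is pointwise (fiberwise Grassmann) multiplication by an even function, not because $e^S-1$ is a left $\ast$-multiplication operator (it is not, since $S$ depends on $\bar\eta,\bar\theta$); the conclusion you need is nevertheless true.
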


\begin{proposition}\label{P:lift}
Given an element $f \in C^\infty(M)((\nu)) \subset \F$, there exists a unique element $F \in \F$ such that
\[
                 (L_\sigma + R_\sigma) F = 0 \mbox{ and } F - f \in \J_l + \J_r.
\]
If $f = \sum_{r\geq p} \nu^r f_r$, where $f_r \in C^\infty(M)$ for $r \geq p$ and $f_p$ is nonzero, then $F = \sum_{i \geq 2p} F^i$ and $F^{2p} = \nu^p e^{-S}f_p$.
\end{proposition}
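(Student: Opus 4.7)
The plan is to adapt the inductive scheme of Proposition~\ref{P:rlift} to the symmetric situation, with $\E$ replaced by $\E+\bar\E$ and $\J_r$ replaced by $\J_l+\J_r$. I would write $L_\sigma+R_\sigma=\sum_{i\geq 0}(A^i+B^i)$ with $\deg(A^i+B^i)=i$; by~(\ref{E:azero1}) the degree-zero component is $A^0+B^0=e^{-S}\bigl(-(\E+\bar\E)\bigr)e^S$. An element $f=\sum_{s\geq p}\nu^s f_s\in C^\infty(M)((\nu))$ is independent of the fiber coordinates, so $\E f=\bar\E f=0$, and since $\deg\nu=2$ and $\deg f_s=0$, its degree-homogeneous components are $f^{2s}=\nu^s f_s$ for $s\geq p$ and $f^k=0$ whenever $k$ is odd or $k<2p$. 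In particular $\fdeg f\geq 2p$, and I look for $F=\sum_{k\geq 2p}F^k$ with $\deg F^k=k$ and $F^k-f^k\in\J_l+\J_r$ for every $k$.

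Expanding $(L_\sigma+R_\sigma)F=0$ degree by degree gives the recursive system
\begin{equation*}
(\E+\bar\E)\,e^S F^k \;=\; e^S \sum_{i=1}^{k-2p}(A^i+B^i)\,F^{k-i}, \qquad k\geq 2p,
\end{equation*}
with empty (hence zero) right-hand side when $k=2p$. The base case then forces, by the extension of Lemma~\ref{L:uniqsol} for $\E+\bar\E$ stated just after that lemma, the value $F^{2p}=e^{-S}\bigl(\nu^p f_p+(\E+\bar\E)^{-1}(0)\bigr)=\nu^p e^{-S}f_p$. Since $\deg e^{-S}=0$ (every term $\nu^{-r}(-S_{-1})^r/r!$ has degree $-2r+2r=0$) this element is indeed of pure degree $2p$, and because $S_{-1}=-\varphi_{-1}+\psi_{-1}\in\J_l+\J_r$ the difference $F^{2p}-\nu^p f_p$ lies in $\J_l+\J_r$, as required.

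For the inductive step, assume $F^{2p},\ldots,F^{k-1}$ have been constructed. The ranges of $L_\sigma$ and $R_\sigma$ lie in $\J_r$ and $\J_l$ respectively (as noted after~(\ref{E:oplsigma})), so each term $(A^i+B^i)F^{k-i}$ lies in $\J_l+\J_r$; pointwise multiplication by the scalar function $e^S$ preserves both $\J_l$ and $\J_r$, so the right-hand side belongs to $\J_l+\J_r$. The cited extension of Lemma~\ref{L:uniqsol} then delivers a unique $F^k$ with $F^k-f^k\in\J_l+\J_r$, namely
\begin{equation*}
F^k \;=\; e^{-S}\Bigl(f^k + (\E+\bar\E)^{-1} e^S \sum_{i=1}^{k-2p}(A^i+B^i)\,F^{k-i}\Bigr).
\end{equation*}
Uniqueness of the whole series $F$ is then immediate from Lemma~\ref{L:luniq}(iii).

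The main point requiring care is the parity bookkeeping: the ``initial data'' $f$ only enters the recursion at even degrees $k=2s\geq 2p$, while the components $F^k$ for odd $k$ (and for even $k$ not of the form $2s$) are determined purely from the lower-degree pieces with $f^k=0$. Once this is sorted out, the construction is essentially a repetition of the argument for Proposition~\ref{P:rlift}, the only substantive ingredient being the unstated but routine extension of Lemma~\ref{L:uniqsol} from $\E$ to $\E+\bar\E$ with $\J_r$ replaced by $\J_l+\J_r$, which follows by exactly the same proof.
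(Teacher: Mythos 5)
Your proof is correct and follows exactly the paper's intended argument: the paper presents Proposition~\ref{P:lift} as a ``similar'' consequence of the same inductive scheme used for Proposition~\ref{P:rlift}, with $\E$ replaced by $\E+\bar\E$, $\J_r$ by $\J_l+\J_r$, and Lemma~\ref{L:luniq}(iii) supplying uniqueness. Your parity remark (that $f$ only contributes at even degrees $\geq 2p$) is a correct and slightly more explicit fleshing-out of the bookkeeping the paper leaves implicit.
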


\begin{proposition}
   An element $F \in \F$ satisfies the equation
\begin{equation}\label{E:lplusr}
                     (L_\sigma + R_\sigma) F = 0
\end{equation}
if and only if it satisfies the equations
\begin{equation}\label{E:landr}
       L_\sigma F = 0 \mbox{ and } R_\sigma F = 0.
\end{equation}
\end{proposition}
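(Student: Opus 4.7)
The reverse implication is immediate, so the content is the forward direction: assuming $(L_\sigma + R_\sigma)F = 0$, show that each summand vanishes separately. My plan is to isolate the common value $H := L_\sigma F = -R_\sigma F$, show $H$ lies in $\J_l \cap \J_r$ and is itself annihilated by $L_\sigma + R_\sigma$, and then invoke Lemma \ref{L:luniq}(iii).

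First, observe that the range of $L_\sigma$ lies in $\J_r$ and the range of $R_\sigma$ lies in $\J_l$, as recorded just after formula \eqref{E:oplsigma}. Hence, defining $H := L_\sigma F$, the equation $(L_\sigma + R_\sigma)F = 0$ gives $H = -R_\sigma F$, which puts $H$ simultaneously in $\J_r$ and in $\J_l$, so in particular $H \in \J_l + \J_r$.

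The next step is to observe that $L_\sigma$ and $R_\sigma$ commute. For any functions $f,g$ on $TM \oplus \Pi TM$, the left and graded right $\ast$-multiplication operators supercommute; since $\sigma$ is even, the supercommutator of $L_\sigma$ and $R_\sigma$ reduces to their ordinary commutator, and $L_\sigma R_\sigma = R_\sigma L_\sigma$. Applying $L_\sigma$ to the assumed identity $(L_\sigma + R_\sigma)F = 0$ and using this commutativity,
\[
(L_\sigma + R_\sigma)\,H \;=\; (L_\sigma + R_\sigma)\, L_\sigma F \;=\; L_\sigma (L_\sigma + R_\sigma) F \;=\; 0.
\]
Now Lemma \ref{L:luniq}(iii) applies to $H$: it belongs to $\J_l + \J_r$ and is killed by $L_\sigma + R_\sigma$, so $H = 0$. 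Consequently $L_\sigma F = 0$, and then $R_\sigma F = -L_\sigma F = 0$ as well.

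I do not expect any real obstacle here, since every needed ingredient—the range statements for $L_\sigma$ and $R_\sigma$, the supercommutativity of left and right $\ast$-multiplication (specialized to the even element $\sigma$), and the uniqueness part of Lemma \ref{L:luniq}(iii)—has already been established. The one point to double-check is just that $\sigma$ being even (which follows from \eqref{E:symbsig}, since both $\varphi$ and $\hat\omega$ are even) is what allows one to drop the super-sign when commuting $L_\sigma$ past $R_\sigma$; this is the small fact that makes the short argument go through.
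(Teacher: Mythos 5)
Your proof is correct and follows essentially the same route as the paper: set $H := L_\sigma F$, observe it lies in $\J_l + \J_r$ (the paper only uses $H \in \J_r \subset \J_l + \J_r$; your observation that $H \in \J_l \cap \J_r$ is a harmless sharpening), show $(L_\sigma + R_\sigma)H = 0$ via commutativity of $L_\sigma$ and $R_\sigma$, and invoke Lemma \ref{L:luniq}(iii). The only small difference is that you derive $R_\sigma F = -L_\sigma F = 0$ directly rather than by the paper's "one can prove similarly," which is a minor streamlining.
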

\begin{proof}
Equations (\ref{E:landr}) imply (\ref{E:lplusr}). Assume that condition (\ref{E:lplusr}) holds and set $G := L_\sigma F$. Since the operators $L_\sigma$ and $R_\sigma$ commute, we have
\[
     (L_\sigma + R_\sigma)G = (L_\sigma + R_\sigma)L_\sigma F = L_\sigma (L_\sigma + R_\sigma)F =0.
\]
Since the range of the operator $L_\sigma$ lies in $\J_r \subset \J_l + \J_r$, we obtain from part ~ (iii) of Lemma~ \ref{L:luniq} that $L_\sigma F = 0$. One can prove similarly that $R_\sigma F = 0$.
\end{proof}

\begin{theorem}\label{T:threeequiv}
The following conditions on $F \in \F$ are equivalent:
\begin{enumerate}
\item $L_\sigma F = 0$.
\item $L_{\chi} F = 0 \mbox{ and } L_{\tilde\chi} F = 0$.
\item On any coordinate chart, $L_{\bar\eta^l} F = 0 \mbox{ and } L_{\bar\theta^l} F = 0$ for all $l$.
\end{enumerate}
\end{theorem}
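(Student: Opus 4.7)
My approach is the cycle (3)$\Rightarrow$(2)$\Rightarrow$(1), (1)$\Rightarrow$(2), and (1)$\Rightarrow$(3), using the tools already set up. Since $\eta^k,\theta^k$ are holomorphic and $\bar\eta^l,\bar\theta^l$ antiholomorphic on $TM\oplus\Pi TM$, the $\ast$-factorizations $\chi = \nu^{-1}\eta^k\ast g_{kl}\ast\bar\theta^l$ and $\tilde\chi = \nu^{-1}\theta^k\ast g_{kl}\ast\bar\eta^l$ are valid (the outer factors act by pointwise multiplication), giving $L_\chi = \nu^{-1}\sum_{k,l}\eta^k L_{g_{kl}}L_{\bar\theta^l}$ and $L_{\tilde\chi} = \nu^{-1}\sum_{k,l}\theta^k L_{g_{kl}}L_{\bar\eta^l}$, so (3)$\Rightarrow$(2). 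From $\sigma = [\chi,\tilde\chi]_\ast$, $L_\sigma = L_\chi L_{\tilde\chi} + L_{\tilde\chi}L_\chi$, giving (2)$\Rightarrow$(1).

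For (1)$\Rightarrow$(2): writing $L_\chi = \sum_k \eta^k \alpha_k$ with $\alpha_k = \nu^{-1}g_{kq}\bar\theta^q + \p/\p\theta^k$, every element $L_\chi F = \sum_k \eta^k(\alpha_k F)$ lies in the right submodule $\J_r$; similarly $L_{\tilde\chi}F \in \J_r$, since $L_{\tilde\chi}$ has a common left factor of $\theta^k$ in every term. Because $\sigma$ generates the supercenter of $\langle\chi,\tilde\chi,\sigma\rangle$, the commutators $[L_\sigma,L_\chi] = [L_\sigma, L_{\tilde\chi}] = 0$ vanish, so $L_\sigma(L_\chi F) = L_\chi(L_\sigma F) = 0$ and similarly for $L_{\tilde\chi}F$. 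Lemma \ref{L:luniq}(i) then forces both $L_\chi F$ and $L_{\tilde\chi}F$ to vanish.

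The hard step is (1)$\Rightarrow$(3). I would prove it by establishing an analog of Proposition \ref{P:rlift} for the subspace defined by the (3) conditions: for every $G \in \ker\E$ there exists a unique $F$ satisfying (3) with $F - G \in \J_r$, built inductively on the standard degree with leading component $F^p = e^{-S}G^p$. Uniqueness is automatic, since any two such lifts differ by an element of $\J_r$ satisfying (3), hence by (3)$\Rightarrow$(1) and Lemma \ref{L:luniq}(i) vanishes. Once existence is established, the (1)-lift and the (3)-lift from any common $G$ must agree, because any $F$ satisfying (3) also satisfies (1) and Proposition \ref{P:rlift} provides the same parametrization of $\ker L_\sigma$ by $\ker\E$. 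The technical heart is the solvability of the inductive step $L_{\bar\theta^l}F = L_{\bar\eta^l}F = 0$ at each standard degree, which I would handle by using the explicit formulas $L_{\bar\theta^l} = L_{g^{lk}}(g_{kq}\bar\theta^q + \nu\p/\p\theta^k)$ and $L_{\bar\eta^l} = L_{g^{lk}}L_{g_{kq}\bar\eta^q}$ in conjunction with the invertibility of the degree-zero operator $A^0 = e^{-S}(-\E)e^S$ already exploited in Proposition \ref{P:rlift}.
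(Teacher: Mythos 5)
Your proofs of $(3)\Rightarrow(2)\Rightarrow(1)$ and of $(1)\Rightarrow(2)$ are correct and essentially identical to the paper's: the $\ast$-factorizations of $\chi$ and $\tilde\chi$ via the holomorphic/antiholomorphic structure, the supercommutation relation $[\chi,\tilde\chi]_\ast=\sigma$, and the combination ``range of $L_\chi$ lies in $\J_r$, commutation with $L_\sigma$, Lemma~\ref{L:luniq}(i)'' are exactly what the paper does.

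The genuine gap is in $(1)\Rightarrow(3)$. You propose to first prove the \emph{existence} of a lift to the subspace cut out by condition $(3)$ (an analog of Proposition~\ref{P:rlift}), and then identify it with the $L_\sigma$-lift by uniqueness. The uniqueness and identification steps would indeed go through once existence is granted, but the existence is the whole content of the implication: you must solve, degree by degree, the simultaneous system of $2m$ equations $L_{\bar\eta^l}F=0$, $L_{\bar\theta^l}F=0$, and you have not addressed the compatibility of that system at each inductive step. Proposition~\ref{P:rlift} works because the single equation $L_\sigma F=0$ is solved by inverting the degree-zero symbol $A^0=e^{-S}(-\E)e^S$ on $\J_r$; for the system in $(3)$ there is no such single invertible symbol, and the claim that the inductive step is solvable is precisely as hard as (and essentially equivalent to) showing $(1)\Rightarrow(3)$.

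The paper's argument instead \emph{derives} $(3)$ from $(1)$ directly, with no auxiliary existence lemma. It decomposes $F$ into components $F_0+\dots+F_m$ where $e^\psi F_i$ is homogeneous of degree $i$ in $\theta$, observes from the explicit formula~(\ref{E:oplsigma}) that $e^\psi L_\sigma e^{-\psi}$ contains no $\bar\theta$ and is of degree zero in $\theta$, hence preserves this decomposition, and then applies Lemma~\ref{L:luniq}(i) to kill $F_i$ for $i\geq 1$ (these lie in $\J_r$). This yields $e^\psi F$ independent of $\theta$, from which $L_{\bar\theta^l}F=0$ follows, and a parallel argument using $L_{\tilde\chi}F=0$ together with the same conjugation gives $L_{\bar\eta^l}F=0$. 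You should replace the ``analog of Proposition~\ref{P:rlift}'' step by this direct $\theta$-degree decomposition (or else actually carry out the compatibility verification for the $2m$-equation system, which the paper avoids).
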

\begin{proof}
Clearly, (3) $\Rightarrow$ (1) and (2). Also, (2) $\Rightarrow$ (1) because of the supercommutation relation $\left[\chi,\tilde\chi \right]_\ast = \sigma$.
Let us prove that (1)  $\Rightarrow$ (2). Set $G:= L_{\chi} F$. We have
\[
     L_\sigma G = L_\sigma L_{\chi} F = L_{\chi}L_\sigma F = 0.
\]
Locally,
\[
    G = \eta^k  L_{\nu^{-1}g_{kl} \bar\theta^l} F,
\]
hence $G \in \J_r$. Lemma \ref{L:luniq} implies that $G= L_{\chi} F=0$. The statement that $ L_{\tilde\chi} F = 0$ can be proved similarly. It remains to prove that (1), (2) $\Rightarrow$ (3). Recall that $\psi$ is the function on $TM \oplus \Pi TM$ given by (\ref{E:psimin}).  Assume that $L_\sigma F=0$ and represent $F$ as the sum 
\[
F = F_0 + F_1 + \ldots + F_m,
\]
where $F_i$ is such that $e^\psi F_i$ is homogeneous of degree $i$  in the variables $\theta$. Inspecting (\ref{E:oplsigma}), one can see that the operator $e^\psi L_\sigma e^{-\psi}$ does not contain the variables $\bar\theta$ and is of degree zero with respect to the variables $\theta$. Since $L_\sigma F=0$, it follows that
\[
      \left(e^\psi L_\sigma e^{-\psi}\right) e^\psi F_i = 0 \mbox{ for } 0 \leq i \leq m.
\]
Therefore, $L_\sigma F_i = 0$ for $0 \leq i \leq m$. Clearly, $F_i \in \J_r$ for $i \geq 1$. By Lemma \ref{L:luniq}, the equation $ L_\sigma F_i = 0$ for $i \geq 1$ has only the zero solution. Therefore, $F = F_0$, so that $\tilde F:=e^\psi F$ does not depend on the variables~ $\theta$. We have for all $p$ that
\[
     L_{\nu^{-1}g_{pq}\bar\theta^q} F = e^{-\psi}\frac{\p}{\p \theta^p} e^\psi F = 0.
\]
One gets from the formula $\bar\theta^l = \nu g^{lk} \ast \left(\nu^{-1}g_{kq}\bar\theta^q \right)$ that
\[
     L_{\bar\theta^l} F = L_{\nu g^{lk}} L_{\nu^{-1}g_{kq}\bar\theta^q}F = 0.
\]
In order to prove that $L_{\bar\eta^l}F=0$, we use that $L_{\tilde\chi}F = \theta^p  L_{\nu^{-1} g_{pq} \bar\eta^q} F = 0$. We see from (\ref{E:lgbareta}) that the operator
\[
      e^{\psi} L_{\nu^{-1} g_{pq} \bar\eta^q} e^{-\psi}
\]
does not contain the variables $\bar\theta$ and is of degree zero with respect to the variabes $\theta$. Since $\tilde F=e^\psi F$ does not depend on the variables~ $\theta$, the element
\[
      W_p:= e^{\psi} L_{\nu^{-1} g_{pq} \bar\eta^q} F = e^{\psi} L_{\nu^{-1} g_{pq} \bar\eta^q} e^{-\psi}\tilde F
\]
also does not depend on the variables $\theta$ for any $p$. We have 
\[
\theta^p W_p = e^\psi L_{\tilde\chi} F = 0, 
\]
hence $W_p=0$ for all $p$. One can show as above that it implies that $L_{\bar\eta^q} F = 0$ for all $q$.
\end{proof}
One can prove similarly the following theorem.
\begin{theorem}\label{T:threeequiv1}
The following equations on $F \in \F$ are equivalent:
\begin{enumerate}
\item $R_\sigma F = 0$.
\item $R_{\chi} F = 0 \mbox{ and } R_{\tilde\chi} F = 0$.
\item On any coordinate chart,  $R_{\eta^k} F = 0 \mbox{ and } R_{\theta^k} F = 0$  for all $k$.
\end{enumerate}
\end{theorem}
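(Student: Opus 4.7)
The plan is to mirror the proof of Theorem \ref{T:threeequiv} verbatim, exploiting the symmetry that swaps left and right $\ast$-multiplication, the submodules $\J_r$ and $\J_l$, and the pairs $(\bar\eta^l,\bar\theta^l)$ and $(\eta^k,\theta^k)$. In this right-sided version, Lemma \ref{L:luniq}(ii) plays the role that Lemma \ref{L:luniq}(i) played before.

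The implication $(3)\Rightarrow(1),(2)$ is routine once one rewrites $\chi$ and $\tilde\chi$ as $\ast$-products whose rightmost factor is one of the $\eta^k$ or $\theta^k$. Using the identity $R_{f\ast g}=(-1)^{|f||g|}R_g R_f$ together with the factorizations $\eta^k=(\nu^{-1}g_{pl}\eta^p)\ast(\nu g^{lk})$ and $\theta^k=(\nu^{-1}g_{pl}\theta^p)\ast(\nu g^{lk})$ (valid because $R_{\nu g^{lk}}$ acts as pointwise multiplication on anything independent of $\bar\eta$, by formula (\ref{E:rbullf})), one obtains that $R_\chi$ and $R_{\tilde\chi}$ factor through $R_{\eta^k}$ and $R_{\theta^k}$. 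The implication $(2)\Rightarrow(1)$ is immediate from the relation $[\chi,\tilde\chi]_\ast=\sigma$.

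For $(1)\Rightarrow(2)$, set $G:=R_\chi F$. Because $\sigma$ lies in the supercenter of $\langle\chi,\tilde\chi,\sigma\rangle$, the operators $R_\sigma$ and $R_\chi$ commute, so $R_\sigma G=R_\chi R_\sigma F=0$. The local shape of $R_\chi$ (the analogue of the formula $L_\chi=\chi+\eta^k\partial/\partial\theta^k$ derived just after the statement of \eqref{E:chitilchi}) places $G$ in $\J_l$, and Lemma \ref{L:luniq}(ii) forces $G=0$. The identical argument with $\tilde\chi$ finishes this implication.

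The main obstacle is the implication $(1),(2)\Rightarrow(3)$, which mirrors the most delicate part of the proof of Theorem \ref{T:threeequiv}. The plan is to write down the right analogue of formula (\ref{E:oplsigma}) and check that $e^{\psi}R_\sigma e^{-\psi}$ contains no $\theta$'s and is of $\bar\theta$-degree zero; this is the computational heart of the argument and is where care is needed in translating the proof. Decomposing $F$ by $\bar\theta$-degree of $e^\psi F$, one obtains $R_\sigma F_i=0$ for each component, and Lemma \ref{L:luniq}(ii) kills all components with $\bar\theta$-degree $\geq 1$, so $\tilde F:=e^\psi F$ is independent of $\bar\theta$. Then $R_{\tilde\chi}F=\bar\theta^l R_{\nu^{-1}g_{kl}\theta^k}F=0$ (where the $\bar\theta^l$ factor is detected on the right) forces $R_{\nu^{-1}g_{kl}\theta^k}F=0$, and the factorization $\theta^k=(\nu^{-1}g_{pl}\theta^p)\ast(\nu g^{lk})$ upgrades this to $R_{\theta^k}F=0$. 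A parallel manipulation starting from $R_\chi F=0$, performed at the level of $e^\psi F$ after conjugating $R_{\nu^{-1}g_{kl}\bar\theta^l}$ by $e^\psi$, extracts $R_{\eta^k}F=0$, completing the argument.
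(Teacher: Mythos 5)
Your overall plan is precisely what the paper intends: the paper's entire proof of Theorem~\ref{T:threeequiv1} is the sentence ``One can prove similarly the following theorem,'' and mirroring the proof of Theorem~\ref{T:threeequiv} under the symmetry exchanging $L$ with $R$, $\J_r$ with $\J_l$, Lemma~\ref{L:luniq}(i) with (ii), and the barred fiber variables $\bar\eta,\bar\theta$ with $\eta,\theta$, is exactly the intended argument. The implications $(2)\Rightarrow(1)$ and $(1)\Rightarrow(2)$ are translated correctly. Two remarks on details, however.

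For $(3)\Rightarrow(1),(2)$ what you actually need is $\chi = \eta^k\ast(\nu^{-1}g_{kl}\bar\theta^l)$ and $\tilde\chi = \theta^k\ast(\nu^{-1}g_{kl}\bar\eta^l)$ with the \emph{leftmost} $\ast$-factor being $\eta^k$ or $\theta^k$ (these hold since $L_{\eta^k}=\eta^k$ and $L_{\theta^k}=\theta^k$), so that $F\ast\chi=(F\ast\eta^k)\ast(\cdots)=0$ and likewise for $\tilde\chi$. The factorizations you invoke, $\eta^k=(\nu^{-1}g_{pl}\eta^p)\ast(\nu g^{lk})$ and $\theta^k=(\nu^{-1}g_{pl}\theta^p)\ast(\nu g^{lk})$, are correct but are needed for a different step.

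In the final implication $(1),(2)\Rightarrow(3)$, the identity $R_{\tilde\chi}F=\bar\theta^l R_{\nu^{-1}g_{kl}\theta^k}F$ is not right: $\tilde\chi=\nu^{-1}\theta^k g_{kl}\bar\eta^l$ contains $\bar\eta^l$, not $\bar\theta^l$, and moreover $R_{\tilde\chi}F=0$ is not what this step rests on. In the mirror of the original argument, once $\tilde F=e^\psi F$ is known to be $\bar\theta$-independent one reads directly that $R_{-\nu^{-1}g_{kl}\theta^k}F = e^{-\psi}\tfrac{\partial}{\partial\bar\theta^l}e^{\psi}F =0$ (the dual of $L_{\nu^{-1}g_{pq}\bar\theta^q}F = e^{-\psi}\tfrac{\partial}{\partial\theta^p}e^{\psi}F=0$, coming from $R_{\partial X/\partial\bar\theta^l}=\partial X/\partial\bar\theta^l+\partial/\partial\bar\theta^l$), and then $\theta^k=(\nu^{-1}g_{pl}\theta^p)\ast(\nu g^{lk})$ upgrades this to $R_{\theta^k}F=0$. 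The equation $R_\chi F=0$ is then used for the remaining claim, $R_{\eta^k}F=0$, but the element to conjugate by $e^\psi$ is $R_{\nu^{-1}\eta^k g_{kl}}$, not $R_{\nu^{-1}g_{kl}\bar\theta^l}$: from $\chi=(\nu^{-1}\eta^kg_{kl})\ast\bar\theta^l$ one gets $R_\chi F = \bar\theta^l R_{\nu^{-1}\eta^kg_{kl}}F$, one checks $W_l:=e^\psi R_{\nu^{-1}\eta^kg_{kl}}F$ is $\bar\theta$-independent, concludes $W_l=0$ from $\bar\theta^l W_l=0$, and finishes with the factorization of $\eta^k$. These are slips of translation rather than a wrong route; the underlying mechanism — decompose by $\bar\theta$-degree of $e^\psi F$, use the $\theta$-free, $\bar\theta$-degree-zero form of $e^\psi R_\sigma e^{-\psi}$, and invoke Lemma~\ref{L:luniq}(ii) — is the correct mirror of the paper's argument.
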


For any open subset $U \subset M$ we denote by $\K(U)$ the subspace of $\F(U)$ of elemens $F$  satisfying the following equivalent conditions:
\begin{enumerate}[(a)]
\item $(L_\sigma + R_\sigma) F = 0$;
\item $L_\sigma F = 0$ and $R_\sigma F = 0$;
\item On any coordinate chart on $U$, $\bar\eta^l \ast F = \bar\theta^l \ast F = F \ast \eta^k = F \ast \theta^k = 0$.
\end{enumerate}

 According to Proposition \ref{P:lift}, for every element $f \in C^\infty(U)((\nu))$ there exists a unique element $F \in \K(U)$ such that $F - f \in \J_l + \J_r$. It will be denoted by $K_f$. The mapping $f \mapsto K_f$ is a bijection from $C^\infty(U)((\nu))$ onto~$\K(U)$.  We set $\K := \K(M)$.

\begin{lemma}\label{L:techn}
  Assume that $f,g \in C^\infty(M)((\nu)), G \in \F,$ and $G - g \in \J_l + \J_r$. Then
\[
    K_f \ast G = K_f \ast g \mod  \J_l \mbox{ and } G \ast K_f  = g \ast K_f \mod \J_r.
\]
\end{lemma}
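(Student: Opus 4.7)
The plan is to decompose $G - g$ into its $\J_r$ and $\J_l$ components and evaluate the $\ast$-products using the local defining conditions of $\J_r$, $\J_l$, and characterization (c) of $\K$ from Section \ref{S:subk}, together with associativity of $\ast$.

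First, write $G - g = u + v$ with $u \in \J_r$ and $v \in \J_l$, so that
$K_f \ast G - K_f \ast g = K_f \ast u + K_f \ast v$.
Since $K_f \in \K$, condition (c) gives $K_f \ast \eta^k = 0$ and $K_f \ast \theta^k = 0$ in every coordinate chart. Expressing $u$ locally as $\eta^k \ast A_k + \theta^k \ast B_k$ with $A_k, B_k \in \F$ and applying associativity yields
$K_f \ast u = (K_f \ast \eta^k) \ast A_k + (K_f \ast \theta^k) \ast B_k = 0$,
which is chart-independent and hence vanishes globally. On the other hand, locally $v = C_l \ast \bar\eta^l + D_l \ast \bar\theta^l$, so associativity gives
$K_f \ast v = (K_f \ast C_l) \ast \bar\eta^l + (K_f \ast D_l) \ast \bar\theta^l$,
which is again of the local form defining $\J_l$. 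Combining, $K_f \ast G - K_f \ast g \in \J_l$, which is the first identity.

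The second identity is the mirror image, using the remaining half of condition (c): $\bar\eta^l \ast K_f = 0$ and $\bar\theta^l \ast K_f = 0$. Locally, $v \ast K_f = C_l \ast (\bar\eta^l \ast K_f) + D_l \ast (\bar\theta^l \ast K_f) = 0$, while $u \ast K_f = \eta^k \ast (A_k \ast K_f) + \theta^k \ast (B_k \ast K_f) \in \J_r$. Hence $G \ast K_f - g \ast K_f \in \J_r$.

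No real obstacle is anticipated: the argument is a direct bookkeeping check using the compatible vanishing conditions characterizing $\K$ on one side and the generator descriptions of $\J_l$ and $\J_r$ on the other. The only subtlety is ensuring associativity of $\ast$ applies to the triple products involved, but in every instance the middle factor ($\eta^k$, $\theta^k$, $\bar\eta^l$, or $\bar\theta^l$) lies in $\Q$, so the filtered superbimodule structure of $\F$ over $(\Q, \ast)$ from Proposition \ref{P:bimod} is enough.
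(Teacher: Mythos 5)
Your proof is correct and takes essentially the same route as the paper: decompose $G-g$ into a $\J_r$ part and a $\J_l$ part, annihilate the $\J_r$ part of $K_f\ast(G-g)$ using $K_f\ast\eta^k=K_f\ast\theta^k=0$ from condition (c) in Section~\ref{S:subk}, and observe that the remaining terms stay in $\J_l$; the mirror argument handles $G\ast K_f$. The only cosmetic difference is that you make the re-association explicit and note chart-independence, whereas the paper writes the surviving terms directly.
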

\begin{proof}
One can write in local coordinates
\[
G = g + \eta^k \ast A_k + B_l \ast \bar\eta^l + \theta^k \ast C_k + D_l \ast \bar\theta^l
\] 
for some $A_k, B_l, C_k, D_l \in \F$. We have
\[
    K_f \ast G = K_f \ast g + K_f \ast B_l \ast \bar\eta^l + K_f \ast D_l \ast \bar\theta^l.
\]
Therefore, $K_f \ast G = K_f \ast g \mod  \J_l$. The second statement can be proved similarly.
\end{proof}

We introduce the following notation,
\[
                  \varepsilon := K_\unit \in \K.
\]
We have $\varepsilon - \unit \in \J_l + \J_r$. One can prove a stronger statement.
\begin{proposition}\label{P:vareps}
  The element $\varepsilon$ is such that $\varepsilon - \unit \in \J_l \cap \J_r$.
\end{proposition}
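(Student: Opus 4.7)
The plan is to show separately that $\varepsilon-\unit\in\J_r$ and $\varepsilon-\unit\in\J_l$, by invoking the uniqueness statements from Propositions~\ref{P:rlift} and~\ref{P:lift} together with the observation that $\sigma\in\J_l\cap\J_r$. First I would apply Proposition~\ref{P:rlift} with $G=\unit\in\ker\E$ to produce the unique $F\in\F$ satisfying $L_\sigma F=0$ and $F-\unit\in\J_r$. The key claim will be that in fact $F\in\K$, i.e., also $R_\sigma F=0$; once this is known, $F$ satisfies the hypotheses of Proposition~\ref{P:lift} with $f=\unit$ (since $\J_r\subset\J_l+\J_r$), so by uniqueness $F=K_\unit=\varepsilon$ and hence $\varepsilon-\unit\in\J_r$.

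To prove $R_\sigma F=0$, I would set $H:=R_\sigma F$ and observe that $L_\sigma H=R_\sigma L_\sigma F=0$ by the commutation of $L_\sigma$ and $R_\sigma$. Next I would use formulas~(\ref{E:global}) and~(\ref{E:symbsig}) to write
\[
\sigma=\nu^{-1}\eta^kg_{kl}\bar\eta^l-\theta^k\frac{\p^2\Phi}{\p z^k\p\bar z^l}\bar\theta^l,
\]
and note that every monomial of $\sigma$ carries an explicit factor of $\eta$ or $\theta$ and also of $\bar\eta$ or $\bar\theta$, so $\sigma\in\J_l\cap\J_r$. Since $\sigma$ is even, $R_\sigma\unit=\unit\ast\sigma=\sigma\in\J_r$, and since $\J_r$ is a right submodule of $(\Q,\ast)$, $R_\sigma(F-\unit)=(F-\unit)\ast\sigma\in\J_r$; hence $H\in\J_r$. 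Lemma~\ref{L:luniq}(i) then forces $H=0$.

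The inclusion $\varepsilon-\unit\in\J_l$ will follow by the mirror argument, using the antiholomorphic analog of Proposition~\ref{P:rlift} stated immediately after it. Let $F'$ be the unique element with $R_\sigma F'=0$ and $F'-\unit\in\J_l$. Then $L_\sigma\unit=\sigma\in\J_l$ and $L_\sigma(F'-\unit)\in\J_l$ because $\J_l$ is a left submodule, so $L_\sigma F'\in\J_l$; combined with $R_\sigma(L_\sigma F')=L_\sigma(R_\sigma F')=0$, part~(ii) of Lemma~\ref{L:luniq} gives $L_\sigma F'=0$, whence $F'=\varepsilon$ by uniqueness and $\varepsilon-\unit\in\J_l$. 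Together these yield $\varepsilon-\unit\in\J_l\cap\J_r$. The only step that requires genuine verification is the claim $\sigma\in\J_l\cap\J_r$ (not merely one of them individually); this is immediate from the local form displayed above, and everything else is a direct application of uniqueness and the commutation of $L_\sigma$ and $R_\sigma$.
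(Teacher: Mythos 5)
Your proof is correct and follows essentially the same route as the paper: apply Proposition~\ref{P:rlift} with $G=\unit$, show $R_\sigma F\in\J_r$ (using that $\sigma\in\J_r$ and $\J_r$ is a right submodule) and $L_\sigma(R_\sigma F)=0$, invoke Lemma~\ref{L:luniq}(i) to conclude $R_\sigma F=0$, and then use uniqueness to identify $F=\varepsilon$; and similarly for $\J_l$. The only additions are your explicit verification of $\sigma\in\J_l\cap\J_r$ from the local formula and the mirror argument written out in full, both of which the paper leaves implicit.
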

\begin{proof}
It follows from Proposition \ref{P:rlift} that there exists a unique element $F \in \F$ such that $L_\sigma F = 0$ and 
\begin{equation}\label{E:fminu}
F - \unit \in \J_r. 
\end{equation}

Set $G := R_\sigma  F$. Since $\sigma \in \J_l \cap \J_r$ and $\J_r$ is a right submodule, we obtain from (\ref{E:fminu}) that $G = R_\sigma F \in \J_r$.
We also have that
\[
        L_\sigma G = L_\sigma R_\sigma F = R_\sigma L_\sigma F = 0.
\]
Lemma \ref{L:luniq} implies that $G = R_\sigma F = 0$, whence $F \in \K$. We see from (\ref{E:fminu}) that $F = \varepsilon$ and therefore $\varepsilon - \unit \in \J_r$. One can prove similarly that $\varepsilon - \unit \in \J_l$.
\end{proof}

\begin{theorem}\label{T:varkap}
 There exists an even element 
\[
      \varkappa = \nu^{-1} \varkappa_{-1} + \varkappa_0 + \ldots \in \J_l \cap \J_r
\]
such that $\varepsilon = e^{-(S +\varkappa)}$ and $\mathrm{fdeg} \, \varkappa \geq 1$.
\end{theorem}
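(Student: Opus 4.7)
The natural plan is to take a formal logarithm. Define the auxiliary element
\[
W := e^{S}\varepsilon - \unit
\]
in the supercommutative ring $\F$ with its pointwise (fiberwise Grassmann) product, and set
\[
\varkappa := -\log(\unit+W) = \sum_{n\ge 1}\frac{(-1)^{n}}{n}W^{n}.
\]
With this definition I would verify, in order, that (i) $W\in\F^{1}\cap\J_{l}\cap\J_{r}$, (ii) $\varkappa$ is well-defined, even, lies in $\J_{l}\cap\J_{r}$, and has $\fdeg\varkappa\ge 1$, and (iii) $\varepsilon=e^{-(S+\varkappa)}$.

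The essential input for (i) is Proposition~\ref{P:lift} applied to $f=\unit$: the lowest standard-degree component of $\varepsilon=K_{\unit}$ is exactly $F^{0}=e^{-S}$, so $\varepsilon-e^{-S}\in\F^{1}$. Since $e^{S}\in\F^{0}$ and the standard filtration is multiplicative under pointwise multiplication, $W=e^{S}(\varepsilon-e^{-S})\in\F^{1}$. For ideal membership, $S=-\varphi+\psi\in\J_{l}\cap\J_{r}$ is immediate from $\varphi=\nu^{-1}\eta^{k}g_{kl}\bar\eta^{l}$ and $\psi=\nu^{-1}\theta^{k}g_{kl}\bar\theta^{l}$, since each has a factor $\bar\eta$ or $\bar\theta$ on one side and $\eta$ or $\theta$ on the other. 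Combined with Proposition~\ref{P:vareps} ($\varepsilon-\unit\in\J_{l}\cap\J_{r}$), this places $e^{-S}-\unit$, $\varepsilon-e^{-S}$, and hence $W$ all in $\J_{l}\cap\J_{r}$, because both submodules absorb pointwise multiplication by elements of $\F$.

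For (ii), each $W^{n}$ lies in $\F^{n}\cap\J_{l}\cap\J_{r}$, so only finitely many terms of the logarithmic series contribute to each filtration component, giving convergence in $\F$ together with the inclusion $\varkappa\in\J_{l}\cap\J_{r}$ and $\fdeg\varkappa\ge 1$. Parity of $\varkappa$ reduces to parity of $\varepsilon$: applying the uniqueness clause of Proposition~\ref{P:lift} separately to the even and odd parts of $\varepsilon$, and observing that both the defining equation $(L_{\sigma}+R_{\sigma})F=0$ and the condition $F-\unit\in\J_{l}+\J_{r}$ respect parity while $\unit$ is even, one finds that the odd part of $\varepsilon$ solves the problem with $f=0$ and hence vanishes.

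Finally, (iii) is a formal identity: since $S$ and $\varkappa$ are even elements of the supercommutative algebra $\F$, they commute in the pointwise product and the usual manipulation yields
\[
e^{-(S+\varkappa)} = e^{-S}e^{-\varkappa} = e^{-S}(\unit+W) = e^{-S}\cdot e^{S}\varepsilon = \varepsilon.
\]
The only delicate point is managing the interplay between the logarithmic series and the filtration; the multiplicativity of the standard grading together with $\fdeg W\ge 1$ makes this routine, so I do not anticipate a serious obstacle. The additional shape $\varkappa=\nu^{-1}\varkappa_{-1}+\varkappa_{0}+\ldots$ can be extracted by tracking $\nu$-powers through the inductive construction of the $F^{i}$ in Proposition~\ref{P:lift}, using that differentiating $e^{-S}$ produces a factor of $e^{-S}$ that cancels against the $e^{S}$ appearing in the definition of $W$.
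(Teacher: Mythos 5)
Your construction of $\varkappa$ as the formal logarithm of $e^{S}\varepsilon$ — together with the verifications that $W = e^S\varepsilon - \unit \in \F^1 \cap \J_l \cap \J_r$, that $\varkappa$ is even and lies in $\J_l \cap \J_r$ with $\fdeg\varkappa\geq1$, and that $\varepsilon = e^{-(S+\varkappa)}$ — matches the opening step of the paper's proof, which simply asserts the existence of such a $\varkappa$ and then immediately moves on to the real work. However, the substantive content of Theorem~\ref{T:varkap} is precisely the clause you dismiss as "routine": that $\varkappa = \nu^{-1}\varkappa_{-1} + \varkappa_0 + \ldots$, i.e.\ that the $\nu$-filtration degree of $\varkappa$ is at least $-1$. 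This is \emph{not} a consequence of $\fdeg\varkappa\geq1$ or of membership in $\J_l\cap\J_r$: an element of $\F^1$ may have $\nu$-powers unbounded below within each fixed $\deg$-homogeneous component, and the multiplicativity of the standard filtration says nothing about the $\nu$-grading. Your closing remark that the $\nu$-shape "can be extracted by tracking $\nu$-powers through the inductive construction of the $F^i$ in Proposition~\ref{P:lift}, using that differentiating $e^{-S}$ produces a factor of $e^{-S}$ that cancels against $e^{S}$" is not a proof and does not identify the mechanism that makes the bound hold.

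What is actually needed, and what the paper's proof supplies, is a genuinely new ingredient: the operator $C := \nu e^S(L_\sigma + R_\sigma)e^{-S}$ is \emph{natural} (this is Lemma~\ref{L:natconj}, case~(ii), applied with $K=S$; it depends on the naturality of the star product $\ast$ and on $\fdeg(\nu\sigma)=2$ with $\nu$-degree zero). One then rewrites the defining equation $(L_\sigma+R_\sigma)\varepsilon=0$ as $(e^{\varkappa}Ce^{-\varkappa})\unit = 0$ and runs an induction on the $\deg$-homogeneous components $\varkappa^i$, using that $C^2 = -\nu(\E+\bar\E)$ and that the remaining summands at each stage have $\nu$-filtration degree at least zero by the inductive hypothesis and naturality of $C$. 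The induction is on the pieces of $\varkappa$, not of $\varepsilon$, and the naturality of $C$ is the key input that keeps the recursion from producing ever more negative $\nu$-powers. Without this analysis your proof has a genuine gap at the one place where the theorem has teeth.
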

\begin{proof}
It follows from Propositions \ref{P:lift} and \ref{P:vareps} that $\varepsilon - \unit \in \J_l \cap \J_r$,  $\mathrm{fdeg} \, \varepsilon = 0$, and the $\deg$-homogeneous component of $\varepsilon$ of degree zero is $e^{-S}$. Therefore, there exists an element $\varkappa\in\F$ such that $\varepsilon = e^{-(S+\varkappa)},\ \varkappa \in \J_l \cap \J_r$, and $\mathrm{fdeg} \, \varkappa \geq 1$.  We will write $\varkappa = \varkappa^1 + \varkappa^2 + \ldots$, where $\deg \varkappa^i = i$.  We will prove by induction on $i$ that the $\nu$-filtration degree of $\varkappa^i$ as at least~ $-1$. Since the star product $\ast$ is natural, the $\nu$-filtration degree of $\nu\sigma$ is zero, and $\mathrm{fdeg} \, (\nu\sigma)=2$, we get that the operator $\nu(L_\sigma + R_\sigma)$ is natural and $\fdeg\nu(L_\sigma + R_\sigma) = 2$.
By Lemma ~\ref{L:natconj}, the operator
\[
              C := \nu e^S (L_\sigma + R_\sigma) e^{-S}
\]
is natural. Since $\fdeg C =2$, we can write $C = C^2 + C^3 + \ldots$, where $\deg C^i = i$. By Lemma \ref{L:homfdo}, $C^i$ is a differential operator of order not greater than $i$. It follows from equation (\ref{E:azero1}) that
\[
          C^2 = -\nu(\E + \bar\E).
\]
The condition that $(L_\sigma + R_\sigma)\varepsilon =0$ implies that $(e^\varkappa C e^{-\varkappa})\unit =0$. We rewrite this equation as follows,
\begin{equation}\label{E:natopc}
      \sum_{i = 2}^\infty \sum_{j = 0}^i \frac{1}{j!}\left((\ad \varkappa)^j C^i\right) \unit = 0.
\end{equation}
Extracting the $\deg$-homogeneous component of degree 3 from (\ref{E:natopc}), we get
\[
      \nu(\E + \bar \E)\varkappa^1 + C^3\unit = 0.
\]
Since the range of the operator $C$ lies in $\J_l + \J_r$, we obtain that
\[
                  \varkappa^1 =    - \nu^{-1}(\E + \bar \E)^{-1} (C^3\unit), 
\]
whence the $\nu$-filtration degree of $\varkappa^1$ is at least -1. Now assume that for $d >1$ the $\nu$-filtration degree of $\varkappa^i$ is at least $-1$ for all $i < d$.
Extracting the $\deg$-homogeneous component of degree $d+2$ from (\ref{E:natopc}), we get that
\begin{equation}\label{E:lngvark}
    \sum_{i=2}^{d+2} \sum_{j=0}^i \frac{1}{j!} \sum_{k_1 + \ldots k_j = d+2-i} \left(\ad(\varkappa^{k_1} ) \ldots \ad(\varkappa^{k_j})C^i\right) \unit = 0.
\end{equation}
Equation  (\ref{E:lngvark}) contains $\varkappa^i$ for $i \leq d$. The only summand in (\ref{E:lngvark}) containing $\varkappa^{d}$ is
\[
             \left(\ad(\varkappa^{d})C^2\right)\unit = \nu(\E + \bar \E)\varkappa^{d}.
\]
By the induction assumption, the other summands in (\ref{E:lngvark}) are of 
$\nu$-fil\-tration degree at least zero. Since all other summands in  equation (\ref{E:lngvark}) lie in $\J_l + \J_r$, the element $\varkappa^d$ is uniquely determined by this equation and its 
 $\nu$-filtration degree is at least $-1$, which implies the statement of the theorem.
\end{proof}

\section{The algebras $\A$ and $\B$}

Let $(M, \omega_{-1})$ be a pseudo-K\"ahler manifold and $\star$ be a star product with separation of variables on $M$ with classifying form $\omega$. In this section we fix  a contractible coordinate chart $U \subset M$.  Let $\Phi_{-1}$ and $\Phi$ be potentials of $\omega_{-1}$ and $\omega$ on $U$, respectively, $g_{kl}$ be the metric tensor given by (\ref{E:metric}), and $\ast$ be the star product with separation of variables on $TU \oplus \Pi TU$ determined by the potential (\ref{E:superpot}) written as
\[
                  X = \Phi + Y,
\]
where 
\[
    Y := \frac{1}{\nu}\left(\frac{\p \Phi_{-1}}{\p z^p}\eta^p +  \frac{\p \Phi_{-1}}{\p \bar z^q}\bar\eta^q + g_{pq}\theta^p\bar\theta^q\right).           
\]
In this section we will define two subalgebras, $\A$ and $\B$, of the algebra $(\Q(U),\ast)$ and describe their action on the space $\K(U)$.

We lift differential operators on $U$ to $TU \oplus \Pi TU$ using the trivialization $TU \oplus \Pi TU \cong U \times \C^{m|m}$ induced by the choice of local coordinates on $U$. Their lifts commute with the multiplication operators by the variables $\eta,\bar\eta,\theta,\bar\theta$ and the operators $\p/\p \eta, \p/\p \bar \eta, \p/\p\theta,\p/\p\bar\theta$.
\begin{lemma}\label{L:elre}
Given $f \in C^\infty(U)((\nu))$, the operators
\[
       e^{-Y} L_f^\star e^Y \mbox{ and }  e^{-Y} R_f^\star e^Y
\]
are a left and a right $\ast$-multiplication operators on the space $\Q(U)$, respectively.
\end{lemma}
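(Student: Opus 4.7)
The plan is to use the unique characterization of left $\ast$-multiplication operators recalled just after formula~(\ref{E:superlr}): for the product $\ast$ on $TU\oplus \Pi TU$, an operator $A$ equals $L_g$ for some $g$ if and only if it supercommutes with the six ``antiholomorphic'' right $\ast$-multiplication operators
\[
R_{\bar z^l},\ R_{\bar\eta^l},\ R_{\bar\theta^l},\ R_{\p X/\p\bar z^l},\ R_{\p X/\p\bar\eta^l},\ R_{\p X/\p\bar\theta^l},
\]
in which case $g = A(\unit)$. Setting $A := e^{-Y} L_f^\star e^Y$, I will verify these six supercommutation relations by conjugating each of the operators above by $e^Y$ from the left and $e^{-Y}$ from the right, and checking that $L_f^\star$ supercommutes with the result. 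The candidate element will then be $g := A(\unit) = e^{-Y} L_f^\star(e^Y)$.

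The three multiplication operators $R_{\bar z^l} = \bar z^l$, $R_{\bar\eta^l} = \bar\eta^l$, $R_{\bar\theta^l} = \bar\theta^l$ are unchanged by conjugation by $e^Y$ since $e^Y$ commutes with multiplication by any function. They all supercommute with $L_f^\star$: with $\bar z^l$ because $\star$ has separation of variables of anti-Wick type (so $R^\star_{\bar z^l} = \bar z^l$ commutes with $L^\star_f$), and with $\bar\eta^l, \bar\theta^l$ because $L_f^\star$ is a formal differential operator in $z, \bar z$ only. For the three momentum operators, I use the identity $e^Y (\p/\p w) e^{-Y} = \p/\p w - \p Y/\p w$, valid for any even or odd variable $w$ since $Y$ is even. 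This yields
\[
e^Y R_{\p X/\p\bar z^l} e^{-Y} = \frac{\p X}{\p \bar z^l} - \frac{\p Y}{\p \bar z^l} + \frac{\p}{\p \bar z^l} = \frac{\p \Phi}{\p\bar z^l} + \frac{\p}{\p\bar z^l} = R^\star_{\p\Phi/\p\bar z^l}
\]
by (\ref{E:classleft}), while
\[
e^Y R_{\p X/\p\bar\eta^l} e^{-Y} = \frac{\p}{\p\bar\eta^l}, \qquad e^Y R_{\p X/\p\bar\theta^l} e^{-Y} = \frac{\p}{\p\bar\theta^l},
\]
because $\Phi$ is independent of the fiber variables, so $\p X/\p\bar\eta^l = \p Y/\p\bar\eta^l$ and similarly for $\bar\theta^l$, and the two copies of $\p Y/\p w$ cancel. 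The conjugated operator $R^\star_{\p\Phi/\p\bar z^l}$ commutes with $L_f^\star$ because left and right $\star$-multiplications always commute in an associative algebra, while the other two supercommute with $L_f^\star$ trivially.

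Having verified the six conditions, we conclude $A = L_g$. A brief check confirms that $g \in \Q(U)$: writing $L_f^\star = \sum_{r\geq 0}\nu^r L_r$ with $L_r$ of differential order at most $r$ (by naturality of $\star$), each $e^{-Y} L_r(e^Y)$ is a polynomial in the derivatives $\p^\alpha Y$ with $|\alpha|\leq r$, and each such derivative contributes a factor of $\nu^{-1}$; hence $\nu^r e^{-Y} L_r(e^Y)$ has nonnegative $\nu$-order, so $g$ is a formal power series in $\nu$ with coefficients in $\P(U)$. The right-multiplication case is entirely parallel, using the symmetric characterization of $R_h$ via supercommutation with the ``holomorphic'' left operators $L_{z^k}, L_{\eta^k}, L_{\theta^k}, L_{\p X/\p z^k}, L_{\p X/\p\eta^k}, L_{\p X/\p\theta^k}$ and the analogous key identity $e^Y L_{\p X/\p z^k} e^{-Y} = L^\star_{\p\Phi/\p z^k}$. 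The main obstacle is just the bookkeeping of signs arising from the Grassmann variables and confirming the $\nu$-filtration estimate for $g$; the structural conjugation argument itself is clean.
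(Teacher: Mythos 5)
Your proof is correct and takes essentially the same approach as the paper's: conjugate the antiholomorphic right $\ast$-multiplication operators by $e^{Y}$ to recognize $R^\star_{\p\Phi/\p\bar z^l}$ and the bare fiber derivatives via (\ref{E:classleft}), verify (super)commutation with $L^\star_f$, and conclude via the unique characterization of $L_g$. The only presentational difference is that the paper packages the naturality and $\nu$-filtration bookkeeping into an appeal to Lemmas~\ref{L:natconj} (case~(ii), since $Y\in\Q^{-1}(U)$ and the graded pieces of the lifted $L^\star_f$ all have $\deg=0$) and~\ref{L:homfdo}, whereas you carry out the equivalent order count by hand.
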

\begin{proof}
We have $Y \in \Q^{-1}(U)$ and $\fdeg L^\star_f = \fdeg R^\star_f = \fdeg f$. If the $\nu$-filtration degree of $f$ is $p$, then, according to Lemmas \ref{L:natconj} and \ref{L:homfdo}, the even formal differential operators $\nu^{-p}e^{-Y} L_f^\star e^Y$ and $\nu^{-p}e^{-Y} R_f^\star e^Y$ are natural and act on $\Q(U)$ and $\F(U)$. The operator $e^{-Y} L_f^\star e^Y$ commutes with the multiplication operators by the antiholomorphic variables $\bar z,\bar\eta,$ and $\bar\theta$ and the operators
\begin{eqnarray*}
     e^{-Y} \left(\frac{\p\Phi}{\p \bar z^l} + \frac{\p}{\p \bar z^l}\right) e^Y = \frac{\p X}{\p \bar z^l} + \frac{\p}{\p \bar z^l},  e^{-Y} \left(\frac{\p}{\p \bar \eta^l}\right) e^Y = \frac{\p X}{\p \bar \eta^l} + \frac{\p}{\p \bar \eta^l},\\
  \mbox{ and } e^{-Y} \left(\frac{\p}{\p \bar \theta^l}\right) e^Y = \frac{\p X}{\p \bar \theta^l} + \frac{\p}{\p \bar \theta^l}.
\end{eqnarray*}
Therefore, it is a left $\ast$-multiplication operator on the space $\Q(U)$. Similarly, $e^{-Y} R_f^\star e^Y$ is a right $\ast$-multiplication operator on $\Q(U)$.
\end{proof}
Given $f \in C^\infty(U)((\nu))$, we define two even elements of $\Q(U)$,
\[
      \alpha(f) := \left(e^{-Y} R_f^\star e^Y\right)\unit \mbox{ and } \beta(f) := \left(e^{-Y} L_f^\star e^Y\right)\unit.
\]
We have
\[
     R_{\alpha(f)} = e^{-Y} R_f^\star e^Y  \mbox{ and } L_{\beta(f)} = e^{-Y} L_f^\star e^Y,
\]
whence it follows that $\alpha,\beta: (C^\infty(U)((\nu)),\star) \to (\Q(U),\ast)$ are  injective homomorphisms. Their images are subalgebras of $(\Q(U),\ast)$ which will be denoted by $\A$ and $\B$, respectively.
\begin{lemma}\label{L:alphaf}
   For $f \in C^\infty(U)((\nu))$, 
\[
                  \alpha(f) - f \in \J_l + \J_r \mbox{ and } \beta(f) - f \in \J_l + \J_r.
\]
\end{lemma}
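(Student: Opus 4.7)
The plan is to prove both statements by reducing everything to a single observation: an element $u \in \Q(U)$ that vanishes at the zero section $\{\eta = \bar\eta = \theta = \bar\theta = 0\}$ of $TU \oplus \Pi TU$ automatically lies in $\J_l + \J_r$. Expanding such a $u$ as a formal series in the fiber-variable monomials with coefficients in $C^\infty(U)((\nu))$, every monomial has positive total fiber degree. If a monomial $m$ contains an $\eta^k$ or $\theta^k$ factor, then, since $\eta^k, \theta^k$ are holomorphic functions on $TU \oplus \Pi TU$ and $\ast$ is of anti-Wick type, left $\ast$-multiplication by them equals pointwise multiplication, and $m = \eta^k \ast m'$ (or $\theta^k \ast m'$) lies in $\J_r$. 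Otherwise $m$ consists solely of $\bar\eta^l$ and $\bar\theta^l$ factors, and dually $m = m'' \ast \bar\eta^l$ (or $m'' \ast \bar\theta^l$) lies in $\J_l$. Grouping the two kinds of monomials gives a decomposition $u = u_r + u_l$ with $u_r \in \J_r$ and $u_l \in \J_l$.

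Given this reduction, it suffices to show that $\alpha(f)$ and $\beta(f)$ both take the value $f$ on the zero section. I would use the formal operator identity
\[
e^{-Y} R^\star_f e^Y = \sum_{k\geq 0} \frac{(-1)^k}{k!}(\ad Y)^k R^\star_f,
\]
which is a well-defined operator series on $\Q(U)$ by the argument in the proof of Lemma~\ref{L:elre} (using Lemma~\ref{L:natconj}(ii)). Applying both sides to $\unit$ expresses $\alpha(f)$ as a sum term by term; the $k = 0$ summand is $R^\star_f\unit = \unit \star f = f$. For $k \geq 1$, an iterated Leibniz expansion shows that $(\ad Y)^k R^\star_f$ is a finite sum of operators of the form
\[
(\text{scalar combinatorial coefficient})\cdot \p^{\beta_1}(Y)\cdots \p^{\beta_k}(Y)\,\p^{\mu - \beta_1 - \cdots - \beta_k},
\]
where all $\p$'s are partial derivatives in the $z, \bar z$ variables only, $\mu$ is the multi-index of a derivative appearing in $R^\star_f$, and each $\beta_i$ is a strictly positive multi-index (the nonzero condition arises at each commutator step because the $\beta_i = 0$ contribution cancels between $(\cdot)\circ Y$ and $Y \circ (\cdot)$). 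Applied to $\unit$, the trailing differential operator $\p^{\mu - \beta_1 - \cdots - \beta_k}$ annihilates $\unit$ unless $\mu = \beta_1 + \cdots + \beta_k$, and in that case the result is the pointwise product $\p^{\beta_1}(Y)\cdots \p^{\beta_k}(Y)$. Since every monomial of $Y$ contains at least one fiber variable and $z, \bar z$-derivatives preserve this property, each $\p^{\beta_i}(Y)$ has fiber-polynomial degree $\geq 1$, and the $k$-fold product has fiber degree $\geq k \geq 1$, so it vanishes on the zero section.

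The argument for $\beta(f) - f$ is identical, with $L^\star_f$ in place of $R^\star_f$ and $L^\star_f \unit = f \star \unit = f$. The main technical obstacle I anticipate is the bookkeeping of the nested commutators: after one commutator $[Y, R^\star_f]$ the resulting operator already has coefficients depending on the fiber variables, so it is a priori unclear that the subsequent $[Y, \cdot]$ commutators continue to produce only $\p^{\beta}(Y)$ factors with no further complication. The point that keeps the estimate clean is that partial derivatives in $z, \bar z$ commute with multiplication by and differentiation in the fiber variables $\eta, \bar\eta, \theta, \bar\theta$, so the Leibniz expansion propagates uniformly through each iteration, producing exactly one additional factor $\p^{\beta}(Y)$ per commutator and yielding the claimed fiber-degree bound.
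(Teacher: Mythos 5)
Your proof is correct and is the natural unwinding of the definitions, which is all the paper itself offers (its proof is the one-line remark that the lemma follows directly from the definitions of $\alpha(f)$ and $\beta(f)$). The nested-commutator worry you raise at the end is resolved by exactly the observation you already supply: since $\partial/\partial z^k$ and $\partial/\partial\bar z^l$ commute with all fiber-variable operations, each iterated $\ad Y$ uniformly peels off one $\partial^{\beta}(Y)$ factor with $\beta>0$ and nothing else, so every $k\ge 1$ term of the series applied to $\unit$ has fiber degree at least $k$ and therefore vanishes on the zero section, while the $k=0$ terms give $R^\star_f\unit=f$ and $L^\star_f\unit=f$.
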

\begin{proof}
The lemma follows directly from the definitions of $\alpha(f)$ and ~$\beta(f)$.
\end{proof}
\begin{lemma}\label{L:abcomm}
Given $f \in C^\infty(U)((\nu))$, the element $\alpha(f)$ $\ast$-commutes with the variables $\bar\eta$ and $\bar\theta$ and $\beta(f)$  $\ast$-commutes with $\eta$ and $\theta$.
\end{lemma}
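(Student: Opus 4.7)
The strategy is to compute each of the two sides of the asserted supercommutators separately and observe that they coincide with the same pointwise (Grassmann) product. I will describe the argument for $\alpha(f)$ in detail; the case of $\beta(f)$ is completely parallel, exchanging left and right multiplication and exchanging holomorphic with antiholomorphic variables, and using $L_{\beta(f)} = e^{-Y} L_f^\star e^Y$ in place of $R_{\alpha(f)} = e^{-Y} R_f^\star e^Y$.

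For the ``right'' side I would invoke the anti-Wick separation of variables of the product $\ast$: since $\bar\eta^l$ and $\bar\theta^l$ are antiholomorphic functions on $TU \oplus \Pi TU$, one has $F \ast b = Fb$ for arbitrary $F$ and any antiholomorphic $b$, so $\alpha(f) \ast \bar\eta^l = \alpha(f)\bar\eta^l$ and $\alpha(f) \ast \bar\theta^l = \alpha(f)\bar\theta^l$, as pointwise Grassmann products.

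For the ``left'' side I would rewrite $\bar\eta^l \ast \alpha(f) = L_{\bar\eta^l} R_{\alpha(f)} \unit$ and $\bar\theta^l \ast \alpha(f) = L_{\bar\theta^l} R_{\alpha(f)} \unit$, then exploit the super-commutation $[L_u, R_v]_\ast = 0$ to move $L_{\bar\eta^l}$, resp.\ $L_{\bar\theta^l}$, past $R_{\alpha(f)}$. Since $R_{\alpha(f)}$ is even, both signs arising in this swap are $+1$, and the expressions become $R_{\alpha(f)}\bar\eta^l$ and $R_{\alpha(f)}\bar\theta^l$. By Lemma~\ref{L:elre}, $R_{\alpha(f)} = e^{-Y} R_f^\star e^Y$; because $e^{\pm Y}$ are even multiplication operators and $R_f^\star$ only acts in the variables $z, \bar z$, the Grassmann multiplication operators by $\bar\eta^l$ and $\bar\theta^l$ super-commute through all three factors. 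Sliding them all the way to the left and then applying $R_{\alpha(f)}$ to $\unit$ yields $R_{\alpha(f)}\bar\eta^l = \bar\eta^l \alpha(f)$ and $R_{\alpha(f)}\bar\theta^l = \bar\theta^l \alpha(f)$.

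Comparing the two computations and using that $\alpha(f)$ is even (so $\alpha(f)\bar\eta^l = \bar\eta^l \alpha(f)$ and $\alpha(f)\bar\theta^l = \bar\theta^l \alpha(f)$ as Grassmann products) gives $[\alpha(f), \bar\eta^l]_\ast = [\alpha(f), \bar\theta^l]_\ast = 0$. There is no substantive obstacle: the proof uses only Lemma~\ref{L:elre}, the separation of variables of $\ast$, and the $L$--$R$ super-commutation. The only point that requires any care is keeping track of Grassmann signs when sliding the odd variable $\bar\theta^l$ past the operators $e^{\pm Y}$, $R_f^\star$, and $R_{\alpha(f)}$; all of these signs are trivial because each of those operators is even.
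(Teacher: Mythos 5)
Your argument is correct and is essentially the paper's proof: both rest on the observation that $R_{\alpha(f)} = e^{-Y}R_f^\star e^Y$ commutes with the graded multiplication operators $R_{\bar\eta^l}=\bar\eta^l$ and $R_{\bar\theta^l}=\bar\theta^l$, since $R_f^\star$ acts only in the base variables and $e^{\pm Y}$ are even multiplication operators. The paper then reads off the $\ast$-commutation of $\alpha(f)$ with $\bar\eta^l,\bar\theta^l$ directly from this operator commutation, whereas you spell out the same computation by evaluating both sides of the supercommutator on $\unit$; the difference is only in the bookkeeping.
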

\begin{proof}
The operator $R_f^\star$ commutes with the variables $\bar\eta$ and $\bar\theta$. Therefore, $ R_{\alpha(f)} = e^{-Y} R_f^\star e^Y$ commutes with the operators $R_{\bar\eta^l} = \bar\eta^l$ and $R_{\bar\theta^l} = \bar\theta^l$. It follows that $\alpha(f)$ $\ast$-commutes with the variables $\bar\eta$ and $\bar\theta$. Similarly, $\beta(f)$ 
$\ast$-commutes with $\eta$ and $\theta$.
\end{proof}

\begin{corollary}\label{C:kinvar}
The left action of the algebra $\A$ and the right action of the algebra $\B$ on $\F(U)$ leave $\K(U)$ invariant. 
\end{corollary}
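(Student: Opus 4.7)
The plan is to verify the characterization (c) of $\K(U)$ from Section \ref{S:subk} for the new elements $\alpha(f) \ast F$ and $F \ast \beta(f)$, using Lemma \ref{L:abcomm} together with the associativity of $\ast$.

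First, fix $f \in C^\infty(U)((\nu))$ and $F \in \K(U)$. Since $\A \subset \Q(U)$, Proposition \ref{P:bimod} already guarantees that $\alpha(f) \ast F \in \F(U)$, so only the four vanishing conditions remain. For the two conditions involving multiplication on the right by $\eta^k$ and $\theta^k$, I would use associativity of $\ast$ directly:
\[
    (\alpha(f) \ast F) \ast \eta^k = \alpha(f) \ast (F \ast \eta^k) = 0,
\]
and similarly for $\theta^k$. For the two conditions involving multiplication on the left by $\bar\eta^l$ and $\bar\theta^l$, I would invoke Lemma \ref{L:abcomm}: since $\alpha(f)$ is even and supercommutes with the (even) $\bar\eta^l$ and the (odd) $\bar\theta^l$, one has
\[
    \bar\eta^l \ast (\alpha(f) \ast F) = \alpha(f) \ast (\bar\eta^l \ast F) = 0,
\]
and an analogous computation, with the appropriate sign which is trivially $+1$ because $\alpha(f)$ is even, for $\bar\theta^l$. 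This shows $\alpha(f) \ast F \in \K(U)$.

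For the right $\B$-action, the roles of the two pairs of variables are swapped. Given $g \in C^\infty(U)((\nu))$ and $F \in \K(U)$, associativity yields
\[
    \bar\eta^l \ast (F \ast \beta(g)) = (\bar\eta^l \ast F) \ast \beta(g) = 0
\]
and the analogous identity with $\bar\theta^l$. Lemma \ref{L:abcomm} then gives, using that $\beta(g)$ supercommutes with $\eta^k$ and $\theta^k$,
\[
    (F \ast \beta(g)) \ast \eta^k = F \ast (\beta(g) \ast \eta^k) = F \ast (\eta^k \ast \beta(g)) = 0,
\]
and similarly for $\theta^k$ (again with a trivial sign since $\beta(g)$ is even). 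Hence $F \ast \beta(g) \in \K(U)$.

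There is no real obstacle here; once Lemma \ref{L:abcomm} is available the argument is purely formal, a combination of the supercommutation of $\alpha(f),\beta(g)$ with the defining "annihilating" variables and the associativity of $\ast$. The only point worth double-checking is the $\F(U)$-bimodule structure over $(\Q(U),\ast)$ from Proposition \ref{P:bimod}, which ensures that these products make sense before one starts computing with them.
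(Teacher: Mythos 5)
Your proof is correct and takes essentially the same approach the paper has in mind: it uses characterization (c) of $\K(U)$ together with Lemma \ref{L:abcomm} and associativity of $\ast$, which is exactly what the paper means when it says the claim ``follows immediately from the definition.'' You spell out correctly both the commutation arguments (with the trivially $+1$ signs since $\alpha(f)$ and $\beta(g)$ are even) and the direct associativity steps; nothing is missing.
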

\begin{proof}
The corollary follows immediately from the definition of the space $\K(U)$.
\end{proof}
\begin{lemma}
   Given $f \in C^\infty(U)((\nu))$,
\begin{equation}\label{E:alphavareps}
                    \alpha(f) \ast \varepsilon = K_f \mbox{ and } \varepsilon \ast \beta(f) = K_f.
\end{equation}
\end{lemma}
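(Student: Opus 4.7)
The plan is to use the uniqueness characterization of $K_f$: it is the unique element of $\K(U)$ whose difference with $f$ lies in $\J_l + \J_r$. So I would verify these two properties for $\alpha(f) \ast \varepsilon$ (and symmetrically for $\varepsilon \ast \beta(f)$).

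First I would observe that $\alpha(f) \ast \varepsilon \in \K(U)$. This is immediate from Corollary~\ref{C:kinvar}: since $\alpha(f) \in \A$ and $\varepsilon = K_\unit \in \K(U)$, the left action of $\A$ preserves $\K(U)$.

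Next I would show that $\alpha(f) \ast \varepsilon - f \in \J_l + \J_r$. I split
\[
\alpha(f) \ast \varepsilon - f = \bigl(\alpha(f) - f\bigr) + \alpha(f) \ast (\varepsilon - \unit).
\]
The first summand is in $\J_l + \J_r$ by Lemma~\ref{L:alphaf}. For the second, I use Proposition~\ref{P:vareps} to write $\varepsilon - \unit$ locally as
\[
\varepsilon - \unit = C_l \ast \bar\eta^l + D_l \ast \bar\theta^l
\]
for some $C_l, D_l \in \F$. Then
\[
\alpha(f) \ast (\varepsilon - \unit) = \bigl(\alpha(f) \ast C_l\bigr) \ast \bar\eta^l + \bigl(\alpha(f) \ast D_l\bigr) \ast \bar\theta^l \in \J_l,
\]
which is what I need. (The supercommutation of $\alpha(f)$ with $\bar\eta^l, \bar\theta^l$ from Lemma~\ref{L:abcomm} is not even required here, since $\J_l$ is already a left $(\Q,\ast)$-submodule; associativity suffices.) Combined with the invariance of $\K(U)$, the uniqueness statement that defines $K_f$ then forces $\alpha(f) \ast \varepsilon = K_f$.

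The second identity $\varepsilon \ast \beta(f) = K_f$ follows by the mirror argument: $\beta(f) \in \B$ acts on $\K(U)$ from the right preserving it by Corollary~\ref{C:kinvar}; writing $\varepsilon - \unit \in \J_r$ locally as $\eta^k \ast A_k + \theta^k \ast B_k$ shows $(\varepsilon - \unit) \ast \beta(f) \in \J_r$ (here $\J_r$ being a right submodule is what is needed); combined with $\beta(f) - f \in \J_l + \J_r$ from Lemma~\ref{L:alphaf}, the same uniqueness yields the claim. No step here is a real obstacle — all the structural work was carried out in Propositions~\ref{P:lift}, \ref{P:vareps} and the preceding lemmas; this proof is essentially a bookkeeping exercise to package those inputs.
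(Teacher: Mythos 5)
Your proof is correct and follows essentially the same route as the paper: use Proposition~\ref{P:vareps} to get $\varepsilon - \unit \in \J_l$, hence $\alpha(f)\ast\varepsilon - \alpha(f)\in\J_l$ since $\J_l$ is a left submodule, combine with $\alpha(f)-f\in\J_l+\J_r$ from Lemma~\ref{L:alphaf}, and invoke Corollary~\ref{C:kinvar} together with the uniqueness characterization of $K_f$. Your parenthetical remark that Lemma~\ref{L:abcomm} is not needed here and that the left-submodule property of $\J_l$ suffices is also consistent with what the paper actually uses.
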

\begin{proof}
  By Proposition \ref{P:vareps}, $\varepsilon - \unit \in \J_l$, whence $\alpha(f) \ast \varepsilon - \alpha(f) \in \J_l \subset \J_l + \J_r$. We get from Lemma \ref{L:alphaf} that $\alpha(f) \ast \varepsilon - f \in \J_l + \J_r$ which, according to Corollary \ref{C:kinvar}, implies the first equation in (\ref{E:alphavareps}). The proof of the second equality is similar.
\end{proof}

\begin{proposition}
  Given $f,g  \in C^\infty(U)((\nu))$,
\[
      \alpha(f) \ast K_g = K_{f \star g} \mbox{ and } K_f \ast \beta(g) = K_{f \star g}.
\]
\end{proposition}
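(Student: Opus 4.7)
The plan is to exploit two pieces of structure already in place: the immediately preceding lemma, which expresses $K_h = \alpha(h) \ast \varepsilon = \varepsilon \ast \beta(h)$ for any $h \in C^\infty(U)((\nu))$, together with the fact that $\alpha$ and $\beta$ are algebra homomorphisms from $(C^\infty(U)((\nu)), \star)$ to $(\Q(U), \ast)$, which was recorded immediately after Lemma \ref{L:elre}. With these two ingredients in hand, the proposition reduces to a bookkeeping identity and does not require any new estimates in the standard filtration.

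For the first equality I would apply the homomorphism property $\alpha(f \star g) = \alpha(f) \ast \alpha(g)$ along with associativity of $\ast$ (available since $\F$ is a filtered superbimodule over $(\Q,\ast)$ by Proposition \ref{P:bimod}) to compute
\begin{align*}
K_{f \star g} &= \alpha(f \star g) \ast \varepsilon = \bigl(\alpha(f) \ast \alpha(g)\bigr) \ast \varepsilon \\
&= \alpha(f) \ast \bigl(\alpha(g) \ast \varepsilon\bigr) = \alpha(f) \ast K_g.
\end{align*}
The second equality follows by a symmetric calculation, using the homomorphism identity $\beta(f \star g) = \beta(f) \ast \beta(g)$ together with $K_h = \varepsilon \ast \beta(h)$:
\begin{align*}
K_{f \star g} &= \varepsilon \ast \beta(f \star g) = \varepsilon \ast \bigl(\beta(f) \ast \beta(g)\bigr) \\
&= \bigl(\varepsilon \ast \beta(f)\bigr) \ast \beta(g) = K_f \ast \beta(g).
\end{align*}

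There is no real obstacle here since every ingredient has already been forged. Were one inclined to a more structural route, one could instead argue by the uniqueness characterization of $K_{f \star g}$: Corollary \ref{C:kinvar} places $\alpha(f) \ast K_g$ in $\K(U)$, while Lemma \ref{L:alphaf} combined with Lemma \ref{L:techn} yields $\alpha(f) \ast K_g \equiv f \star g \pmod{\J_l + \J_r}$, and symmetrically for $K_f \ast \beta(g)$. Both routes deliver the same result, but the homomorphism calculation above is the more transparent one.
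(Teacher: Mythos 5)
Your proof is correct and matches the paper's argument exactly: both rely on the identity $K_h = \alpha(h) \ast \varepsilon = \varepsilon \ast \beta(h)$ from the preceding lemma and the fact that $\alpha,\beta$ are $\star$-to-$\ast$ homomorphisms, then unwind by associativity. The only difference is cosmetic (you run the chain of equalities in the opposite direction).
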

\begin{proof}
  We have
\[
     \alpha(f) \ast K_g = \alpha(f) \ast \alpha(g) \ast \varepsilon = \alpha(f \star g) \ast\varepsilon = K_{f \star g}. 
\]
The proof of the second equality is similar.
\end{proof}
\begin{corollary}\label{C:fmod}
Given $f,g  \in C^\infty(U)((\nu))$,
\[
    f \ast K_g = f \star g \mod (\J_l+\J_r) \mbox { and } K_g \ast f = g \star f  \mod (\J_l+\J_r).
\]
\end{corollary}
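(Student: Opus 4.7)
The plan is to deduce both congruences from the preceding Proposition (which gives $\alpha(f) \ast K_g = K_{f \star g}$ and $K_f \ast \beta(g) = K_{f \star g}$) by using Lemma \ref{L:techn} to replace $\alpha(f)$ by $f$, and $\beta(f)$ by $f$, at the cost of elements of $\J_r$ or $\J_l$ respectively.

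For the first equation, I would invoke Lemma \ref{L:alphaf} to the effect that $\alpha(f) - f \in \J_l + \J_r$, so that the triple $(\text{lemma's }f, g, G) = (g, f, \alpha(f))$ meets the hypothesis of Lemma \ref{L:techn}. The second half of that lemma then delivers
\[
\alpha(f) \ast K_g \equiv f \ast K_g \pmod{\J_r}.
\]
Combining this with $\alpha(f) \ast K_g = K_{f \star g}$ from the Proposition, and with the defining relation $K_{f \star g} - (f \star g) \in \J_l + \J_r$ of the lift $K_{(\cdot)}$, the three congruences chain to give $f \ast K_g \equiv f \star g \pmod{\J_l + \J_r}$.

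For the second equation the argument is the mirror image. Lemma \ref{L:alphaf} equally supplies $\beta(f) - f \in \J_l + \J_r$, so the first half of Lemma \ref{L:techn}, applied with $(\text{lemma's }f, g, G) = (g, f, \beta(f))$, yields
\[
K_g \ast \beta(f) \equiv K_g \ast f \pmod{\J_l}.
\]
The Proposition gives $K_g \ast \beta(f) = K_{g \star f} \equiv g \star f \pmod{\J_l + \J_r}$, and chaining once more produces $K_g \ast f \equiv g \star f \pmod{\J_l + \J_r}$.

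There is no genuine obstacle here; the Corollary is essentially a bookkeeping deduction from the preceding Proposition together with Lemmas \ref{L:alphaf} and \ref{L:techn}. The only point deserving attention is the pairing of sides: the congruence for $f \ast K_g$ uses the $\J_r$-half of Lemma \ref{L:techn}, because the error term $\alpha(f) - f$ must be moved past $K_g$ on the right, while the congruence for $K_g \ast f$ uses the $\J_l$-half, for the symmetric reason.
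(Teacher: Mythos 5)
Your proposal is correct and matches the paper's proof: both invoke Lemma \ref{L:alphaf} to get $\alpha(f) - f \in \J_l + \J_r$ (resp.\ for $\beta$), apply the appropriate half of Lemma \ref{L:techn} to replace $\alpha(f)$ (resp.\ $\beta(f)$) by $f$, and then use the preceding Proposition together with the defining property $K_{f\star g} - f\star g \in \J_l + \J_r$. Your care about which half of Lemma \ref{L:techn} pairs with which side is exactly the bookkeeping the paper leaves implicit.
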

\begin{proof}
Since $\alpha(f) - f \in \J_l +\J_r$, we have by Lemma \ref{L:techn} that
\[
    f \ast K_g = \alpha(f) \ast K_g = K_{f \star g} = f \star g \mod(\J_l +\J_r).
\]
The proof of the second statement is similar.
\end{proof}

Given $f \in C^\infty(U)((\nu))$, we denote by $\tilde\alpha(f)$ the element of $\Q(U)$ which does not depend on the variables $\bar\eta,\bar\theta$ and is such that $\alpha(f) - \tilde\alpha(f) \in \J_l$, that is,
\[
\tilde\alpha(f) = \alpha(f)|_{\bar\eta=\bar\theta=0}. 
\]
We get from Lemma \ref{L:alphaf} that $\tilde\alpha(f) - f \in \J_r$.
Then for $g \in C^\infty(U)((\nu))$ we have
\begin{equation}\label{E:tildea}
                      \tilde\alpha(f) \ast K_g = \alpha(f) \ast K_g = K_{f \star g} \mbox{ and } K_g \ast \tilde\alpha(f) = K_g \ast f.
\end{equation}
We define similarly an element $\tilde\beta(f) := \beta(f)|_{\eta=\theta=0}$ of $\Q(U)$ which satisfies the condition $\tilde\beta(f)-f \in \J_l$ and is such that 
\begin{equation}\label{E:tildeb}
       K_g \ast \tilde\beta(f) = K_g \ast \beta(f) = K_{g \star f} \mbox{ and } \tilde\beta(f) \ast K_g = f \ast K_g.
\end{equation}
Then we introduce an element
\begin{equation}\label{E:kappa}
              \kappa(f) := \tilde\alpha(f) + \tilde\beta(f) - f.
\end{equation}
\begin{proposition}\label{P:kcomm}
Given $f,g  \in C^\infty(U)((\nu))$,
\[
      \kappa(f) \ast K_g - K_g \ast \kappa(f) = K_{f\star g - g\star f}.
\]
\end{proposition}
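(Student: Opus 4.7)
The plan is to compute $\kappa(f)\ast K_g$ and $K_g\ast \kappa(f)$ term by term using the two boxed pairs of identities~(\ref{E:tildea}) and (\ref{E:tildeb}), and then subtract. The whole statement should reduce to a short bookkeeping argument, with the only substantive ingredient being the linearity of the lift $f\mapsto K_f$, which follows from the uniqueness part of Proposition~\ref{P:lift}.

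First I would expand the left factor using the definition $\kappa(f) = \tilde\alpha(f)+\tilde\beta(f)-f$ and the distributivity of $\ast$:
\[
\kappa(f)\ast K_g \;=\; \tilde\alpha(f)\ast K_g \;+\; \tilde\beta(f)\ast K_g \;-\; f\ast K_g.
\]
By the first identity of (\ref{E:tildea}), $\tilde\alpha(f)\ast K_g = K_{f\star g}$, and by the second identity of (\ref{E:tildeb}), $\tilde\beta(f)\ast K_g = f\ast K_g$. Thus the $f\ast K_g$ terms cancel and one is left with $\kappa(f)\ast K_g = K_{f\star g}$.

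Symmetrically, I would expand the right factor,
\[
K_g\ast \kappa(f) \;=\; K_g\ast\tilde\alpha(f) \;+\; K_g\ast\tilde\beta(f) \;-\; K_g\ast f,
\]
and apply the second identity of (\ref{E:tildea}), giving $K_g\ast\tilde\alpha(f)=K_g\ast f$, together with the first identity of (\ref{E:tildeb}), giving $K_g\ast\tilde\beta(f) = K_{g\star f}$. Again the two $K_g\ast f$ terms cancel, producing $K_g\ast \kappa(f) = K_{g\star f}$. Subtracting and using the linearity of $h\mapsto K_h$ (so that $K_{f\star g}-K_{g\star f}=K_{f\star g-g\star f}$) yields the claim.

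I do not expect any real obstacle here: the hard work is already packaged in (\ref{E:tildea}) and (\ref{E:tildeb}), which were obtained by restricting $\alpha(f)$ and $\beta(f)$ to $\bar\eta=\bar\theta=0$ and $\eta=\theta=0$ respectively and exploiting $\varepsilon-\unit\in\J_l\cap\J_r$. The only point one should verify in passing is that $\kappa(f)$ is even (since $f$, $\tilde\alpha(f)$, $\tilde\beta(f)$ are all even, being built from a scalar function on $M$), so that the supercommutator with $K_g$ reduces to the ordinary commutator and no sign corrections enter.
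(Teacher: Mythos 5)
Your proof is correct and follows essentially the same route as the paper: both expand $\kappa(f)=\tilde\alpha(f)+\tilde\beta(f)-f$, apply the four identities in~(\ref{E:tildea}) and~(\ref{E:tildeb}), observe the cancellations, and then invoke linearity of $h\mapsto K_h$ to combine $K_{f\star g}-K_{g\star f}$ into $K_{f\star g-g\star f}$. The paper merely presents the two cancellations simultaneously in a single displayed computation rather than computing $\kappa(f)\ast K_g=K_{f\star g}$ and $K_g\ast\kappa(f)=K_{g\star f}$ separately as you do.
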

\begin{proof}
  We have from (\ref{E:tildea}) and (\ref{E:tildeb}) that
\begin{eqnarray*}
    \kappa(f) \ast K_g - K_g \ast \kappa(f) = (\tilde\alpha(f) + \tilde\beta(f) - f )\ast K_g -\\
 K_g \ast (\tilde\alpha(f) + \tilde\beta(f) - f) = K_{f \star g} - K_{g \star f}.
\end{eqnarray*}
\end{proof}

\section{An evolution equation}

\begin{lemma}\label{L:etk}
 Given a nonzero complex number $k$ and a nonnegative integer $l$, the equation
\begin{equation}\label{E:poft}
           \left(e^{-kt} p(t)\right)' = e^{-kt} t^l
\end{equation}
has a unique polynomial solution
\begin{equation}\label{E:psol}
    p(t) = - \frac{1}{k} \sum_{r=0}^l \left(\frac{1}{k} \frac{d}{dt} \right)^r t^l.
\end{equation}
\end{lemma}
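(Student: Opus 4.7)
The plan is to convert the equation into a first-order linear ODE, then handle uniqueness and existence separately; no substantial obstacle is expected here, only bookkeeping.

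First I would expand the left-hand side by the product rule:
\[
\left(e^{-kt}p(t)\right)' = e^{-kt}\bigl(p'(t) - k p(t)\bigr),
\]
so equation~(\ref{E:poft}) is equivalent to the linear ODE $p'(t) - k p(t) = t^l$, which I rewrite in the rearranged form
\[
k\,p(t) = p'(t) - t^l, \qquad \text{i.e.} \qquad p(t) = \tfrac{1}{k}\,\tfrac{d}{dt} p(t) - \tfrac{1}{k}\,t^l.
\]

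For uniqueness, suppose $p_1$ and $p_2$ are two polynomial solutions. Their difference $q := p_1 - p_2$ satisfies $q' = k q$. Comparing degrees on both sides forces $q = 0$, since $k \neq 0$ and the derivative lowers degree. Hence at most one polynomial solution exists.

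For existence, I iterate the rewritten identity. Treating $D := d/dt$ formally, the equation $(1 - \tfrac{1}{k}D) p = -\tfrac{1}{k} t^l$ can be inverted as
\[
p(t) = -\tfrac{1}{k}\sum_{r=0}^{\infty}\left(\tfrac{1}{k} D\right)^r t^l,
\]
and the series truncates after $r = l$ because $D^{l+1} t^l = 0$. This gives exactly the formula~(\ref{E:psol}). To finish, I would either verify the formula by direct substitution into $p' - kp = t^l$ (a one-line telescoping computation: the sum for $p'$ reindexes against $k p$ leaving only the $r=0$ term $-t^l$ with the opposite sign from $kp$'s $r=0$ term), or simply note that the iteration is forced by the rearranged identity, producing a polynomial of degree $l$, which by uniqueness is \emph{the} solution. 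The main (mild) obstacle is just keeping the sign and the index shift straight in the telescoping verification.
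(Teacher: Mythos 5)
Your proof is correct and follows essentially the same line as the paper's: reduce to $p'-kp=t^l$, invert the operator $1-\tfrac{1}{k}\tfrac{d}{dt}$ by the truncating geometric/Neumann series (the paper writes this as the finite telescoping identity $\bigl(1-\tfrac{1}{k}\tfrac{d}{dt}\bigr)\sum_{r=0}^l\bigl(\tfrac{1}{k}\tfrac{d}{dt}\bigr)^r = 1-\bigl(\tfrac{1}{k}\tfrac{d}{dt}\bigr)^{l+1}$), and get uniqueness from the fact that the homogeneous equation $q'=kq$ has no nonzero polynomial solutions when $k\neq 0$.
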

\begin{proof}
  Equation (\ref{E:poft}) is equivalent to the following one,
\[
       p'(t) - k p(t) = t^l.
\]
Since $k \neq 0$, it can be rewritten as follows,
\[
       \left(1 - \frac{1}{k}\frac{d}{dt}\right) p(t) = - \frac{1}{k} t^l.
\]
Using the identity
\[
    \left(1 - \frac{1}{k}\frac{d}{dt}\right)\sum_{r=0}^l \left(\frac{1}{k} \frac{d}{dt} \right)^r  = 1 -  \left(\frac{1}{k}\frac{d}{dt}\right)^{l+1},
\]
we see that for the polynomial (\ref{E:psol}),
\[
     \left(1 - \frac{1}{k}\frac{d}{dt}\right) p(t) = - \frac{1}{k}\left( 1 -  \left(\frac{1}{k}\frac{d}{dt}\right)^{l+1}\right)t^l =  - \frac{1}{k} t^l.
\]
Since $k \neq 0$, the homogeneous equation
\[
     p'(t) - k p(t)=0
\]
has no nonzero polynomial solutions. Therefore (\ref{E:psol}) is a unique polynomial solution of (\ref{E:poft}).
\end{proof}

We consider the solutions of the evolution equation
\begin{equation}\label{E:evol}
              \frac{d}{dt} F = L_\sigma F
\end{equation}
on the space $\F$ of the form $F(t) = F^{k}(t) + F^{k+1}(t) + \ldots$, where $k \in \Z$ and $\deg F^i(t) =i$ for $i \geq k$. Each $\deg$-homogeneous component $F^i(t)$ of $F$ admits an expansion in the powers of $\nu$,
\begin{equation}\label{E:compi}
  F^i(t) = \sum_{r = - \infty}^{\lfloor i/2 \rfloor} \nu^r F_r^{i-2r}(t),
\end{equation}
where $(\E + \bar\E)F_r^j(t) = j F_r^j(t)$.

It follows from formula (\ref{E:symbsig}) that the operator $L_{\nu\sigma}$ is natural and $\fdeg L_{\nu\sigma} = 2$, so that $L_{\nu\sigma} = A^2 + A^3 + \ldots$, where $\deg A^i=i$ and $A^2 =  - \nu(\E + S) = e^{-S} (-\nu\E) e^S$. Observe that
\begin{equation}\label{E:azero}
      \nu\frac{d}{dt} - A^2  = e^{-S}e^{-t\E}\left(\nu\frac{d}{dt}\right) e^{t\E}e^S.
\end{equation}

\begin{lemma}\label{L:zero}
Equation (\ref{E:evol}) has a unique solution $F(t)$ with the initial condition $F(0) = 0$, the zero solution.
\end{lemma}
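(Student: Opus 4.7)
The plan is to prove that $F\equiv 0$ by induction on the $\deg$-homogeneous components of $F(t)$. First I would expand equation (\ref{E:evol}) component by component. Using the decomposition $L_{\nu\sigma}=A^2+A^3+\ldots$ with $\deg A^i=i$ and $A^2=-\nu(\E+S)$, the operator $A^i/\nu$ raises the standard degree by $i-2$. Hence the $\deg$-$j$ component of $L_\sigma F$ equals $\sum_{i\geq 2}(A^i/\nu)F^{j-i+2}$, where terms with $j-i+2<k$ are absent. So the equation for $F^j$ reads
\[
\frac{dF^j}{dt}=-(\E+S)F^j+\sum_{i=3}^{j-k+2}\frac{A^i}{\nu}F^{j-i+2},
\]
and the sum contains only strictly lower-degree components $F^{j-1},F^{j-2},\ldots,F^k$.

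Next I would induct on $j\geq k$. The inductive hypothesis $F^k(t)=\ldots=F^{j-1}(t)=0$ kills the sum, reducing the problem to the homogeneous Cauchy problem
\[
\frac{dF^j}{dt}+(\E+S)F^j=0,\quad F^j(0)=0.
\]
To conclude $F^j\equiv 0$, I would invoke the identity (\ref{E:azero}), rewritten as
\[
\frac{d}{dt}+\E+S=e^{-S}e^{-t\E}\,\frac{d}{dt}\,e^{t\E}e^S
\]
(which may be checked from $\E e^S=e^S(\E+S)$ using $\E S=S$ since $\deg S=0$, together with the commutation of $\E$ with $e^{t\E}$). Setting $H(t):=e^{t\E}e^S F^j(t)$, the Cauchy problem becomes $\frac{dH}{dt}=0$, so $H$ is constant in $t$. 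The initial condition gives $H(0)=e^S\cdot 0=0$. Since $e^S$ is invertible on $\F$ with inverse $e^{-S}$ and $e^{t\E}$ acts diagonally on $\deg$-homogeneous components by the scalar $e^{it}$, both operators are invertible, and we conclude $F^j=0$, closing the induction.

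The main obstacle is really only a verification: confirming that $e^{\pm S}$ and $e^{t\E}$ are well-defined and invertible operators on each $F^j(t)$. Since $S=\nu^{-1}S_{-1}$ with $\deg S_{-1}=2$, the series $e^{\pm S}=\sum_{r\geq 0}(\pm S_{-1})^r/(r!\nu^r)$ truncates on any $\deg$-homogeneous component, so $e^{\pm S}$ preserves $\F$ and the standard filtration, and $e^{t\E}$ is manifestly diagonal in the degree grading. Once these bookkeeping points are recorded, the argument is a clean induction driven entirely by the conjugation identity (\ref{E:azero}) already established in the text.
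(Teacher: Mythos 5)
Your argument is correct and coincides with the paper's: the paper likewise isolates the lowest nonzero $\deg$-homogeneous component $F^p$, observes that it satisfies the homogeneous equation $(\nu\,d/dt - A^2)F^p = 0$, and applies the conjugation identity (\ref{E:azero}) to conclude $e^{t\E}e^S F^p$ is constant, hence zero, whereas you run this as a full induction over all components — a cosmetic difference only. One small slip in your parenthetical verification: $\E S = S$ holds because $S_{-1}$ is linear in the holomorphic fiber variables $\eta,\theta$ (so it is an eigenvector of the holomorphic fiberwise Euler operator $\E$ with eigenvalue $1$), not because $\deg S = 0$; likewise $e^{t\E}$ acts by scalars on $\E$-eigenspaces, not on $\deg$-homogeneous components, though it is in any case invertible and preserves each $\deg$-component, which is all the argument needs.
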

\begin{proof}
Assume that $F(t) = \sum_{i \geq p} F^i(t)$ with $\deg F^i (t) = i$ and nonzero $F^p(t)$ is a nontrivial solution of (\ref{E:evol}) with the initial condition $F(0) = 0$. Then
\[
     \left(\nu\frac{d}{dt} - A^2\right)F^p = 0.
\]
We have from (\ref{E:azero}) that
\[
     \frac{d}{dt} \left( e^{t\E} e^S F^p(t) \right) = 0.
\]
Therefore, $e^{t\E} e^S F^p(t)$ does not depend on $t$. Since $F^p(0) = 0$, it follows that $e^{t\E} e^S F^p(t) = 0$ for all $t$, whence $F^p(t) =0$ for all $t$. This contradiction proves the lemma.
\end{proof}

\begin{theorem}\label{T:evo}
Equation (\ref{E:evol}) with the initial condition $F(0) = \unit$ on the space $\F$ has a unique solution, $F(t) = F^0(t) + F^1(t) + \ldots$, where $\deg F^i (t) = i$ and
\[
                F^0(t) = e^{(e^{-t}-1)S}.
\]
The component $F^i(t)$  can be expressed as (\ref{E:compi}), where for each pair $(j,r)$ the function $F_r^j(t)$ is a finite sum
\[
   F_r^j(t) = \sum_{k,l \geq 0} e^{-kt} t^l F_{r, k, l}^j
\]
such that $(\E + \bar\E)F_{r, k, l}^j = j F_{r, k, l}^j$ and $F_{r, k, l}^j = 0$ if $k =0$ and $l > 0$. In particular,
\[
     \lim_{t \to \infty} F(t) = \sum_{i=0}^\infty \sum_{r= -\infty}^{\lfloor i/2 \rfloor} \nu^r F_{r,0,0}^{i-2r}.
\]
\end{theorem}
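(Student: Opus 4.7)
Uniqueness is immediate from Lemma \ref{L:zero}: the difference of two solutions of (\ref{E:evol}) agreeing at $t = 0$ satisfies (\ref{E:evol}) with vanishing initial data and hence is zero.

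For existence and the structure claim, I would solve the Cauchy problem order by order in the $\deg$-filtration. Split $L_\sigma = C^0 + \widetilde C$ with $C^0 = -S - \E$ (the degree-zero component from (\ref{E:azero1})) and $\widetilde C = \sum_{j\geq 1} C^j$ strictly raising $\deg$. The substitution $F = e^{-S} G$ combined with the identity $e^S C^0 e^{-S} = -\E$ (which follows from $[\E, S] = S$, noting Lemma \ref{L:natconj}(i) ensures the conjugated operators are natural on $\Q$ and extend to $\F$) transforms (\ref{E:evol}) into
\[
\tfrac{d}{dt} G = -\E G + \mathcal C G, \qquad G(0) = e^S,
\]
where $\mathcal C := e^S \widetilde C e^{-S}$ is $t$-independent and strictly raises $\deg$. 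On the $\deg = 0$ level this reads $\tfrac{d}{dt} G^0 = -\E G^0$, and since $[\E, S] = S$ gives $\E S^n = n S^n$, one obtains $G^0(t) = e^{e^{-t} S}$, hence $F^0(t) = e^{-S} e^{e^{-t} S} = e^{(e^{-t}-1) S}$, manifestly a finite sum of $e^{-mt}$ with $m \geq 0$. For $i \geq 1$ the equation $(\tfrac{d}{dt} + \E) G^i = \sum_{j=1}^i \mathcal C^j G^{i-j}$ with $G^i(0) = 0$ decomposes on joint $(\E, \bar\E)$-eigenspaces into scalar first-order ODEs $(\tfrac{d}{dt} + b) v = h$, handled by Lemma \ref{L:etk} for $b \neq 0$ and by direct integration for $b = 0$. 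By induction this produces each $G^i$, and hence each $F^{i-2r}_r$ via (\ref{E:compi}), as a finite sum $\sum e^{-kt} t^l G^i_{k,l}$ with coefficients of the appropriate $(\E + \bar\E)$-degree.

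The main obstacle is the restriction $F^{i-2r}_{r, k, l} = 0$ when $k = 0$ and $l > 0$: on the $\E$-eigenvalue zero sector the naive integration of a nonzero constant-in-$t$ source would produce a spurious $t$ factor. I would circumvent this by first invoking Proposition \ref{P:rlift} with $G = \unit \in \ker \E$ to obtain the unique stationary $F_\infty \in \F$ with $L_\sigma F_\infty = 0$ and $F_\infty - \unit \in \J_r$, whose leading $\deg$-piece equals $e^{-S}$. Writing $F(t) = F_\infty + V(t)$, the perturbation $V$ satisfies $V' = L_\sigma V$ with $V(0) = \unit - F_\infty$, and the leading piece $V^0 = F^0(t) - e^{-S} = e^{-S} \sum_{n \geq 1} \tfrac{S^n}{n!} e^{-nt}$ is already a sum of $e^{-nt}$ with $n \geq 1$ and no polynomial $t$ factors. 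Proceeding inductively in $\deg$, the same ODE procedure together with Lemma \ref{L:etk} yields each $V^i$ as a sum $\sum e^{-kt} t^l$ with $k \geq 1$ throughout: the only source that could produce a forbidden $(k=0, l>0)$ term on the $\E$-eigenvalue zero sector would be a nonzero constant-in-$t$ contribution to $\sum_j \mathcal C^j V^{i-j}$, but such contribution is absent since by induction every $V^{<i}$ is strictly decaying. Adding back $F_\infty$, the solution $F(t) = F_\infty + V(t)$ then has the claimed structure, with the $(k = 0, l = 0)$ part equal to $F_\infty$ and all remaining contributions having $k \geq 1$; consequently $\lim_{t \to \infty} F(t) = F_\infty = \sum \nu^r F^{i-2r}_{r, 0, 0}$.
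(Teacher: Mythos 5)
Your proof is correct and follows the same overall template as the paper's (uniqueness from Lemma~\ref{L:zero}, degree-by-degree integration of the system using the conjugation $e^{S}(-\E)e^{-S}$ and Lemma~\ref{L:etk}), but it differs in how it rules out the forbidden $F^{j}_{r,0,l}$ with $l>0$. You introduce the stationary element $F_\infty$ from Proposition~\ref{P:rlift} and run the induction on the perturbation $V=F-F_\infty$, strengthening the inductive invariant to ``strictly decaying, $k\geq 1$ throughout.'' This works, but it brings in machinery that the problem does not actually require. The paper's argument is more economical: as already recorded near (\ref{E:oplsigma}), every $\deg$-homogeneous component $A^j$ of $L_\sigma$ (and likewise $A^{j}_r$) has range in $\J_r$, so the inhomogeneous terms in the system (\ref{E:rewrit}) always carry $\E$-eigenvalue $\geq 1$. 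In the integrated formula (\ref{E:ind}) this forces the conjugating exponent $b+j$ to satisfy $b+j\geq 1$, and then each scalar ODE $(\tfrac{d}{dt}+b+j)v = e^{-k\tau}\tau^{l}$ (with only $k=0,\,l=0$ allowed among the sources, by induction) produces a solution with no $(k=0, l>0)$ terms --- precisely because $b+j\neq 0$. In particular the $\E$-eigenvalue-zero sector that you single out as the obstacle receives no source at all for $i\geq 1$, so the ``constant source producing a spurious $t$'' scenario never arises. The one dividend of your route is that the limit $\lim_{t\to\infty}F(t)=F_\infty$ falls out immediately as a by-product, anticipating (and essentially re-deriving) Theorem~\ref{T:lim}, which the paper proves separately.
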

\begin{proof} We will be looking for a solution $F(t)$ of filtration degree zero. Equation (\ref{E:evol}) can be rewritten as the system
\begin{equation}\label{E:rewrit}
       \left(\nu\frac{d}{dt}-A^2\right) F^l (t)= \sum_{i = 1}^l A^{i+2} F^{l-i}(t), \ l \geq 0,
\end{equation}
with the initial conditions $F^0(0) = \unit$ and $F^l(0)=0$ for $l > 0$. For $l=0$ the right-hand side of (\ref{E:rewrit}) is zero. We have from formulas (\ref{E:azero}) and (\ref{E:rewrit}) that
\begin{equation}\label{E:conjug}
     \nu\frac{d}{dt}\left(e^{t\E} e^S F^l(t)\right) = e^{t\E} e^S \sum_{i = 1}^l A^{i+2} F^{l-i}(t).
\end{equation} 
Thus, $e^{t\E} e^S F^0(t)$ does not depend on $t$. Since $F^0(0) = \unit$, we have
\[
    e^{t\E} e^S F^0(t) = e^S.
\]
It follows that
\begin{equation}\label{E:fzero}
      F^0(t) = e^{(e^{-t}-1)S}  =  \sum_{k= 0}^\infty \frac{1}{k! \, \nu^k} (e^{-t}-1)^k (S_{-1})^k.
\end{equation}

We will prove the theorem by induction on $i$.  We see from (\ref{E:fzero}) that the statement of the theorem holds for $i=0$. Assume that it holds for all $i < l$ for $l \geq 1$. 
Since $F^l(0)=0$ for $l \geq 1$, we obtain from (\ref{E:conjug}) that
\begin{equation}\label{E:ind}
     F^l(t) =\nu^{-1} e^{-S} e^{-t\E} \int_0^t e^{\tau\E} e^S \sum_{i = 1}^l A^{i+2} F^{l-i}(\tau) d\tau.
\end{equation} 
According to Lemma \ref{L:homfdo}, the operator $A^j$ can be written as
\[
     A^j = \sum_{r=0}^j \nu^r A_r^{j-2r},
\]
where $\deg A_r^k = k$. The component $F_r^{l-2r}(t)$ of $F^l(t)$ will be expressed as the sum
\begin{equation}\label{E:lsum}
     F_r^{l-2r}(t) = \sum_{(i,a,b,u,v)} F^{i,a,b}_{u,v}(t),
\end{equation}
where
\begin{equation}\label{E:lamb}
      F^{i, a,b}_{u,v}(t) = \frac{1}{a!} \left(-S_{-1}\right)^a e^{-t\E} \int_0^t e^{\tau\E} \frac{1}{b!} \left(S_{-1}\right)^b  A_u^{i+2-2u} F_v^{l-i-2v}(\tau) d\tau
\end{equation}
and the sum in (\ref{E:lsum}) is over the tuples $(i,a,b,u,v)$ such that 
\[
1 \leq i \leq l, a, b \geq 0, v \leq \left\lfloor\frac{l-i}{2}\right\rfloor, 0 \leq u \leq i+2,\mbox{ and } u + v - a - b= r + 1.
\]
In particular, this sum is finite. According to the induction assumption, the function $F_v^{l-i-2v}(\tau)$ in (\ref{E:lamb}) is a finite sum of expressions
\[
     e^{-k\tau} \tau^l P,
\]
where $P\in C^\infty(TM \oplus \Pi TM)$ is polynomial on fibers, $k, l \geq 0$, and the condition $k=0$ implies that $l=0$. Consider the contribution of one such expression to (\ref{E:lamb}),
\begin{equation}\label{E:pcontrib}
    \frac{1}{a!} \left(-S_{-1}\right)^a e^{-t\E} \int_0^t e^{\tau\E} \frac{1}{b!} \left(S_{-1}\right)^b \left( e^{-k\tau} \tau^l A_u^{i+2-2u}P\right) d\tau.  
\end{equation}
We see from (\ref{E:oplsigma})  that for every pair $(r,j)$, the range of the differential operator $A_r^j$ lies in $\J_r$. Therefore, $A_u^{i+2-2u} P$ can be represented as a finite sum
\[
      A_u^{i+2-2u} P = \sum_{j =1}^N Q_j
\]
of fiberwise polynomial functions $Q_j$ such that $\E P = jP$. Consider the contribution of one such function $Q_j$ to  (\ref{E:pcontrib}),
\begin{eqnarray*}
       \frac{1}{a!} \left(-S_{-1}\right)^a e^{-t\E} \int_0^t e^{\tau\E} \frac{1}{b!} \left(S_{-1}\right)^b  \left(e^{-k\tau} \tau^l  Q_j\right) d\tau = \\
      \frac{1}{a!} \left(-S_{-1}\right)^a e^{-t\E} \left(\int_0^t e^{(b - k + j)\tau}  \tau^l d\tau\right) \frac{1}{b!} \left(S_{-1}\right)^b Q_j =\\
       \left(e^{-(b+j)t}\int_0^t e^{(b - k + j)\tau}  \tau^l d\tau\right) \frac{1}{a!} \left(-S_{-1}\right)^a \frac{1}{b!} \left(S_{-1}\right)^b Q_j.
\end{eqnarray*}
Set
\[
        K(t) := e^{-(b+j)t}\int_0^t e^{(b - k + j)\tau}  \tau^l d\tau.
\]
If $b-k+j =0$, then
\[
    K(t) =  e^{-(b+j)t}\frac{t^{l+1}}{l+1}.
\]
Since $b \geq 0$ and $j \geq 1$, we see that $b+j \geq 1$. If $b-k+j \neq 0$, then, according to Lemma \ref{L:etk}, there exists a polynomial $p(\tau)$ such that
\[
      \left(e^{(b-k+j)\tau}p(\tau)\right)' = e^{(b - k + j)\tau}  \tau^l.
\]
It follows that
\[
        K(t) =  e^{-kt} p(t) - e^{-(b+j)t} p(0).
\]
If $k=0$, then $l=0$ and
\[
     K(t) = \frac{1 - e^{-(b+j)t}}{b+j}.
\]
We have thus shown that all summands contributing to $F_r^{l-2r}(t)$ satisfy the conditions of Theorem \ref{T:evo}. Thus, $F(t)$ exists and satisfies the conditions of the theorem. By Lemma \ref{L:zero}, it is a unique solution of (\ref{E:evol}) with the initial condition $F(0)=\unit$.
\end{proof}

\begin{proposition}\label{P:commute}
If $F(t)$ is the solution of equation (\ref{E:evol}) with the initial condition $F(0)=\unit$ and $W$ is an element of the Lie superalgebra $\langle \chi,\tilde\chi,\sigma\rangle$, then
\[
     L_W F(t) = R_W F(t).
\]
The function $F(t)$ is a unique solution of the equation
\begin{equation}\label{E:revol}
     \frac{d}{dt} F(t) = R_\sigma F(t)
\end{equation}
with the initial condition $F(0) = \unit$.
\end{proposition}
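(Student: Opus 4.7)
The plan is to reduce the first assertion to the uniqueness portion of Lemma~\ref{L:zero}. Set $G(t) := L_W F(t) - R_W F(t)$. Since every element of the basis $\{\chi,\tilde\chi,\sigma\}$ has $\deg = 0$, the operators $L_W$ and $R_W$ preserve the standard filtration on $\F$; hence $G(t)$ lies in a filtration space compatible with the framework of Lemma~\ref{L:zero}. I will verify the initial condition $G(0)=0$ and the evolution equation $dG/dt = L_\sigma G$, and then the lemma forces $G \equiv 0$.

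For the initial value, $L_W\unit = W\ast\unit = W$, while the graded right-multiplication rule (\ref{E:grr}) gives $R_W\unit = (-1)^{|W|\cdot|\unit|}\,\unit\ast W = W$, so indeed $G(0)=0$. For the evolution equation, recall that $\sigma$ is even and generates the supercenter of $\langle\chi,\tilde\chi,\sigma\rangle$. Hence $L_\sigma L_W - L_W L_\sigma = L_{[\sigma,W]_\ast} = 0$ (the super-bracket coincides with the ordinary bracket because $|\sigma|=0$), and $L_\sigma$ commutes with $R_W$ by the general identity $[L_f,R_g]=0$. Differentiating under the time derivative,
\begin{equation*}
\frac{dG}{dt} = L_W L_\sigma F - R_W L_\sigma F = L_\sigma(L_W F - R_W F) = L_\sigma G,
\end{equation*}
so Lemma~\ref{L:zero} yields $G(t)\equiv 0$, proving $L_W F(t) = R_W F(t)$.

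For the second assertion, taking $W=\sigma$ above gives $L_\sigma F = R_\sigma F$, so (\ref{E:evol}) immediately implies (\ref{E:revol}), proving existence. For uniqueness I repeat the proof of Lemma~\ref{L:zero} with $L_\sigma$ replaced by $R_\sigma$. The crucial ingredient is the analogue of (\ref{E:azero}): writing $B^2$ for the degree-$2$ component of $R_{\nu\sigma}$, the formula $B^0 = e^{-S}(-\bar\E)e^S$ from (\ref{E:azero1}) gives $B^2 = e^{-S}(-\nu\bar\E)e^S$, whence
\begin{equation*}
\nu\frac{d}{dt} - B^2 = e^{-S} e^{-t\bar\E}\left(\nu\frac{d}{dt}\right) e^{t\bar\E} e^S.
\end{equation*}
If $H(t)$ were a solution of (\ref{E:revol}) with $H(0)=0$ and lowest nonzero homogeneous component $H^p(t)$, then $e^{t\bar\E}e^S H^p(t)$ would be constant in $t$ and zero at $t=0$, forcing $H^p\equiv 0$, a contradiction.

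The main (mild) obstacle is simply the bookkeeping of the super-structure: one must verify that $L_\sigma$ commutes on the nose, not merely super-commutes, with the odd elements $\chi,\tilde\chi$ of the algebra, which is automatic because $\sigma$ is even and central. Once this is clear, both parts of the proposition collapse to applications of Lemma~\ref{L:zero} (and its mirror for $R_\sigma$).
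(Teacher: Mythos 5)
Your proof is correct and follows essentially the same route as the paper's: set $G(t)=(L_W-R_W)F(t)$, verify $G(0)=0$ and $dG/dt=L_\sigma G$ using the (super)centrality of $\sigma$ and $[L_f,R_g]=0$, and invoke Lemma~\ref{L:zero}. Your explicit mirror of the uniqueness argument for $R_\sigma$ (via $B^2 = e^{-S}(-\nu\bar\E)e^S$) is a correct spelling-out of what the paper delegates to ``as the uniqueness in Theorem~\ref{T:evo}.''
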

\begin{proof} We use the fact that $\sigma$ lies in the supercenter of $\langle \chi,\tilde\chi,\sigma\rangle$. Set
\[
      G(t) := \left(L_W  - R_W \right) F(t).
\]
We have
\begin{eqnarray*}
     \frac{d}{dt} G(t) = \left(L_W  - R_W \right) \frac{d}{dt} F(t) = \left(L_W  - R_W \right) L_\sigma F(t) = \\
L_\sigma \left(L_W  - R_W \right)F(t) = L_\sigma G(t)
\end{eqnarray*}
and
\[
     G(0) = \left(L_W  - R_W \right) F(0) = \left(L_W  - R_W \right)\unit = 0.
\]
Lemma \ref{L:zero} implies that $G$ is the zero function, i.e., $L_W F(t) = R_W F(t)$. Therefore, $F(t)$ is a solution of equation (\ref{E:revol}) with the initial condition $F(0)=\unit$. The uniqueness of this solution can be proved as the uniqueness in Theorem \ref{T:evo}.
\end{proof}
\begin{lemma}\label{L:frestr}
The solution $F(t)$ of equation (\ref{E:evol}) with the initial condition $F(0)=\unit$ satisfies the property that $F(t) - \unit \in \J_l \cap \J_r$ for every value of $t$.
\end{lemma}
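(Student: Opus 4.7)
The plan is to integrate the evolution equation (\ref{E:evol}) and then read off which submodule the increment $F(t)-\unit$ lies in from the already-noted structural properties of $L_\sigma$ and $R_\sigma$. Recall from the paragraph following (\ref{E:oplsigma}) that the range of $L_\sigma$ is contained in $\J_r$ and the range of $R_\sigma$ is contained in $\J_l$.

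First I would write, using the initial condition $F(0)=\unit$ and (\ref{E:evol}),
\[
F(t)-\unit = \int_0^t L_\sigma F(\tau)\,d\tau.
\]
By Theorem \ref{T:evo}, at each $\deg$-degree $F(\tau)$ is a finite sum of terms $e^{-k\tau}\tau^l V_{k,l}$ with fixed $V_{k,l}\in\F$, and applying the $\tau$-independent operator $L_\sigma$ preserves this shape while forcing each coefficient to land in $\J_r$. Thus $L_\sigma F(\tau)$ has, on a coordinate chart, the form $\eta^p A_p(\tau)+\theta^p B_p(\tau)$ for some $A_p(\tau),B_p(\tau)\in\F$ whose $\tau$-dependence is again a finite combination of $e^{-k\tau}\tau^l$. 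Integrating the scalar factors termwise (which reduces to the elementary antiderivatives appearing already in Lemma \ref{L:etk} and in the proof of Theorem \ref{T:evo}) and pulling $\eta^p,\theta^p$ out of the integral yields
\[
F(t)-\unit = \eta^p\!\int_0^t\!A_p(\tau)\,d\tau + \theta^p\!\int_0^t\!B_p(\tau)\,d\tau \in \J_r.
\]

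Second, by Proposition \ref{P:commute} the same $F(t)$ is the solution of the companion equation $\tfrac{d}{dt}F=R_\sigma F$ with $F(0)=\unit$. Repeating the integration argument verbatim, but with $R_\sigma$ in place of $L_\sigma$ and using that $R_\sigma$ maps $\F$ into $\J_l$, gives $F(t)-\unit\in\J_l$. Combining the two inclusions produces the desired conclusion $F(t)-\unit\in\J_l\cap\J_r$.

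I do not expect any serious obstacle here: the only delicate point is that the termwise integration is well-defined on the formal space $\F$, but this is immediate from the explicit description of $F(\tau)$ provided by Theorem \ref{T:evo}, which reduces everything, one $\deg$-degree at a time, to a finite sum of elementary integrals $\int_0^t e^{-k\tau}\tau^l\,d\tau$.
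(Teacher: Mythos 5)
Your proof is correct and follows essentially the same route as the paper: integrate the evolution equation using that the range of $L_\sigma$ lies in $\J_r$ to get $F(t)-\unit\in\J_r$, then invoke the companion equation $\frac{d}{dt}F = R_\sigma F$ (Proposition \ref{P:commute}, equation (\ref{E:revol})) and the range of $R_\sigma$ lying in $\J_l$ to get the other inclusion. The paper states this more tersely, leaving the termwise-integration justification implicit, but the argument is the same.
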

\begin{proof}
   Since the range of the operator $L_\sigma$ lies in $\J_r$, equation  (\ref{E:evol}) implies that
\[
      \frac{d}{dt}F(t) \in \J_r.
\]
Since $F(0) = \unit$, it follows that $F(t)- \unit \in \J_r$. Since the range of the operator $R_\sigma$ lies in $\J_l$, we see that equation  (\ref{E:revol}) similarly implies that $F(t) - \unit \in \J_l$.
\end{proof}
\begin{theorem}\label{T:lim}
If $F(t)$ is the solution of the evolution equation (\ref{E:evol}) with the initial condition $F(0)=\unit$, then
\begin{equation}\label{E:limeps}
     \lim_{t \to \infty} F(t) = \varepsilon.
\end{equation}
\end{theorem}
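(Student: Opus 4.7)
The plan is to show that the limit $F_\infty := \lim_{t \to \infty} F(t)$ exists, lies in $\K$, and satisfies $F_\infty - \unit \in \J_l + \J_r$; then uniqueness of $\varepsilon = K_\unit$ (established in the previous section) forces $F_\infty = \varepsilon$.

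First I would check that the limit makes sense. By Theorem \ref{T:evo}, each $\deg$-homogeneous component $F^i(t)$ is a finite sum of terms of the form $e^{-kt} t^l F_{r,k,l}^{i-2r}$ with $k,l \geq 0$, and with the crucial property that $F_{r,k,l}^{i-2r}=0$ whenever $k=0$ and $l>0$. Consequently, as $t \to \infty$, every summand with $k>0$ decays to zero and only the $k=l=0$ summands survive. Hence $F^i(t) \to F_\infty^i := \sum_r \nu^r F_{r,0,0}^{i-2r}$ in each fixed filtration piece, and $F_\infty := \sum_{i\geq 0} F_\infty^i \in \F$ is a well-defined limit in the topology induced by the standard filtration.

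Next I would verify that $F_\infty \in \K$. The explicit description shows that $\tfrac{d}{dt}(e^{-kt}t^l) \to 0$ as $t\to\infty$ for every admissible pair $(k,l)$ (the $k=l=0$ case gives zero derivative; all others decay exponentially). Therefore $\tfrac{d}{dt}F^i(t) \to 0$ in each $\deg$-component. But by equation (\ref{E:evol}), $\tfrac{d}{dt}F(t) = L_\sigma F(t)$; comparing component by component gives $L_\sigma F_\infty = 0$. Proposition \ref{P:commute} asserts that $F(t)$ also satisfies $\tfrac{d}{dt}F(t) = R_\sigma F(t)$, and the same argument yields $R_\sigma F_\infty = 0$. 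Thus $F_\infty$ satisfies the characterizing condition (b) for membership in $\K$.

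Finally, Lemma \ref{L:frestr} gives $F(t) - \unit \in \J_l \cap \J_r$ for every $t$. Since $\J_l$ and $\J_r$ are closed under taking termwise limits in the filtration topology (they are defined component-wise as ranges of multiplication by the variables $\bar\eta,\bar\theta,\eta,\theta$), we conclude $F_\infty - \unit \in \J_l \cap \J_r \subset \J_l + \J_r$. By the uniqueness statement for $K_f$ established in Section \ref{S:subk} (applied with $f = \unit$), the element of $\K$ differing from $\unit$ by an element of $\J_l + \J_r$ is unique and equals $\varepsilon = K_\unit$. Therefore $F_\infty = \varepsilon$, which is (\ref{E:limeps}). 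The only delicate point is the justification of termwise passage to the limit inside $L_\sigma$ and $R_\sigma$, but since these are natural differential operators of nonnegative filtration degree they act by a locally finite sum on each filtration layer, so commuting them with the limit reduces to the elementary fact that $\tfrac{d}{dt}(e^{-kt}t^l) \to 0$.
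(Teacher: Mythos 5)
Your argument is correct and follows essentially the same route as the paper: existence of the limit via the explicit exponential-polynomial form of $F^i(t)$ from Theorem \ref{T:evo}, passing to the limit in a termwise/locally-finite way in the evolution equation, and appealing to Lemma \ref{L:frestr} plus the uniqueness characterization of $\varepsilon$. The only cosmetic difference is that you verify both $L_\sigma F_\infty = 0$ and $R_\sigma F_\infty = 0$ and then invoke uniqueness of $K_{\unit}$, whereas the paper gets away with only $L_\sigma Z = 0$ (plus $Z - \unit \in \J_r$) and invokes the uniqueness statement in Proposition \ref{P:rlift}; the two finish the argument equivalently.
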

\begin{proof}
The limit in (\ref{E:limeps}) exists by Theorem \ref{T:evo}. Denote it temporarily by $Z$. It follows from Lemma \ref{L:frestr} that $Z- \unit \in \J_l \cap \J_r$. To prove the theorem it remains to show that $L_\sigma Z = 0$. We write
\[
   F(t) = \sum_{i=0}^\infty\sum_{r=-\infty}^{\lfloor i/2 \rfloor} \nu^r F_r^{i-2r}(t),
\]
where for each pair $(j,r)$ the $\deg$-homogeneous element $F_r^j(t)$ of degree~$j$ is 
expressed as a finite sum
\[
     F_r^j(t) = \sum_{k,l\geq 0} e^{-kt} t^l F^j_{r,k,l}
\]
such that $F^j_{r,k,l} \in C^\infty(TM \oplus \Pi TM)$ is polynomial on fibers, $\deg F^j_{r,k,l}=j$, and $F^j_{r,k,l}=0$ for $k=0$ and $l\geq 1$. In particular,
\[
   \frac{d}{dt}  F_r^j(t) = \sum_{k >0,l\geq 0} e^{-kt}(l t^{l-1} - k t^l) F^j_{r,k,l}.
\]
Therefore, for any pair $(j,r)$ we have that
\[
     \lim_{t \to \infty} \frac{d}{dt}  F_r^j(t) = 0.
\]
The operator $L_{\nu\sigma}$ is a natural operator of filtration degree 2. According to Lemma \ref{L:homfdo},
\[
       L_{\nu\sigma} = \sum_{i=2}^\infty \sum_{r=0}^i \nu^r A_r^{i-2r},
\]
where $\deg A_r^j = j$. Equation (\ref{E:evol}) is equivalent to the system
\begin{equation}\label{E:longsys}
       \frac{d}{dt} F_r^{i-2r} = \sum_{(j,k,p,q)} A_p^{j-2p} F_q^{k-2q}, \ i \geq 0, r \leq \lfloor i/2 \rfloor,
\end{equation}
where the summation is over the tuples $(j,k,p,q)$ satisfying the conditions $j+k = i+2, p+q = r+1, j \geq 2, 0 \leq p \leq j, k \geq 0, q \leq \lfloor k/2 \rfloor$. In particular, the sum in (\ref{E:longsys}) is finite. Taking the limit as $t \to \infty$ of both sides of (\ref{E:longsys}) we obtain a system equivalent to the equation $L_\sigma Z = 0$. It follows from Proposition \ref{P:rlift} that $Z = \varepsilon$.
\end{proof}

Let $F(t) = F^0(t) + F^1(t) + \ldots$  be the solution of equation~(\ref{E:evol}) with the initial condition $F(0)=\unit$. According to Theorem \ref{T:evo}, 
\[
F^0(t) = \exp\{(e^{-t}-1)S\}. 
\]
There exists a function $G(t) = G^0(t) + G^1(t) + \ldots$, where $\deg G^i(t)=i$, such that $\exp G(t) = F(t), \ G(0)=0$, and
\[
     G^0(t) = (e^{-t}-1)S = \nu^{-1}(e^{-t}-1)S_{-1}.
\]
Lemma \ref{L:frestr} implies that $G(t) \in \J_l \cap \J_r$.
\begin{theorem}\label{T:oscill}
The $\nu$-filtration degree of the function $G(t)$ is $-1$.
\end{theorem}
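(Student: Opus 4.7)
The plan is to convert the evolution equation into a nonlinear ODE for $G(t)$ itself and use the naturalness of $L_{\nu\sigma}$, together with a $\deg$-filtration induction, to bound the $\nu$-filtration degree of each homogeneous component $G^l(t)$. Since $G(t)$ is even, the fiberwise Grassmann product gives $\frac{d}{dt}e^{G(t)} = \dot G(t)\cdot e^{G(t)}$, so $F' = L_\sigma F$ with $F = e^G$ is equivalent to
$$\nu\, \dot G(t) = \bigl(L_{\nu\sigma}\,e^{G(t)}\bigr)\, e^{-G(t)}, \qquad G(0) = 0. \qquad (\star)$$

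The key technical observation I would extract is the following \emph{regularity transfer}: by Lemma \ref{L:homfdo}, $L_{\nu\sigma} = \sum_{r\geq 0}\nu^r B_r$ with each $B_r$ a $\nu$-free differential operator of order $\leq r$. For any $K \in \F$, the quotient $B_r(e^K)\,e^{-K}$ expands, by the Fa\`a di Bruno formula, as a polynomial of total differential order $\leq r$ in the derivatives of $K$. If every derivative of $K$ has $\nu$-filtration degree $\geq -1$, then any product of $s \leq r$ such derivatives has $\nu$-fdeg $\geq -r$, and multiplication by $\nu^r$ restores $\nu$-fdeg $\geq 0$. Consequently $\bigl(L_{\nu\sigma} e^K\bigr) e^{-K}$ has $\nu$-fdeg $\geq 0$ whenever $K$ has $\nu$-fdeg $\geq -1$.

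I would then prove $\nu$-fdeg$(G^l(t)) \geq -1$ by induction on $l$. The base case $l = 0$ is immediate from $G^0(t) = \nu^{-1}(e^{-t}-1)S_{-1}$. For the inductive step, I would assume the bound for $G^i,\, i < l$, and extract the $\deg = l+2$ component of $(\star)$. Writing $L_{\nu\sigma} = \sum_{i\geq 2}A^i$ and expanding each monomial $c\,\p^\alpha$ of $A^i$ in the Bell-polynomial form of $\p^\alpha(e^G)/e^G$, a direct $\deg$-count gives
$$\deg\!\Bigl(\nu^r\, c\,\prod_B(\p^B G^{i_B})\Bigr) \,=\, i + \sum_B i_B.$$
Imposing total degree $l+2$ forces $i + \sum_B i_B = l+2$, and analyzing the admissible configurations shows that for $l \geq 1$ any term involving two or more factors of $G^l$ would need $i \leq 2 - l < 2$ (hence none exists), while the only linear-in-$G^l$ contribution forces $i = 2$ with all other $G$-factors being $G^0$; using $A^2 = -\nu(\E + S)$ this linear contribution equals exactly $-\nu\E G^l$. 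The $\deg=l+2$ component therefore reduces to the first-order linear ODE
$$\dot G^l(t) + \E G^l(t) = \nu^{-1} R_l(t), \qquad G^l(0) = 0,$$
with remainder $R_l$ depending only on $G^0,\ldots,G^{l-1}$. By the regularity transfer applied to the truncation $K = G^0 + \cdots + G^{l-1}$, one obtains $\nu$-fdeg$(R_l) \geq 0$, whence $\nu$-fdeg$(\nu^{-1}R_l) \geq -1$. Solving by the integrating factor $e^{t\E}$,
$$G^l(t) = e^{-t\E}\int_0^t e^{\tau\E}\bigl(\nu^{-1} R_l(\tau)\bigr)\,d\tau,$$
and noting that $e^{\pm t\E}$ is $\nu$-independent and $t$-integration preserves $\nu$-fdeg, yields $\nu$-fdeg$(G^l(t)) \geq -1$, completing the induction.

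The main obstacle will be the combinatorial bookkeeping in the $\deg$-counting step: one must verify the degree identity for each monomial of $A^i$ (including those mixing holomorphic and fiber derivatives) and check that the partition of derivatives in the Bell expansion, subject to the constraint $i + \sum_B i_B = l+2$, really does isolate $-\nu\E G^l$ as the unique linear-in-$G^l$ term and excludes multi-linear-in-$G^l$ terms for $l \geq 1$. This is a mechanical but careful exercise in tracking the standard grading from Lemma \ref{L:homfdo} and the explicit form of $A^2$ derived from formula (\ref{E:oplsigma}).
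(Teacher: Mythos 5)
Your proposal is correct and tracks the paper's own proof of Theorem~\ref{T:oscill} closely: both rewrite the evolution equation as a nonlinear ODE for $G = \log F$, induct on the $\deg$-homogeneous components $G^l$, isolate the unique linear-in-$G^l$ summand via the leading component $A^0 = e^{-S}(-\E)e^S$, and solve by the integrating factor $e^{t\E}$ applied to a remainder lying in $\J_r$. Your Fa\`a di Bruno / Bell-polynomial degree-count is a worked-out version of what the paper compresses into the remark that, ``using the induction assumption and the fact that the operator $\nu L_\sigma$ is natural, it is easy to check that the $\nu$-filtration degree of $H^p$ is at least $-1$.''
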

\begin{proof}
We will prove by induction on $i$ that for every $i \geq 0$ the  $\nu$-filtration degree of the function $G^i = G^i(t)$ is at least $-1$. This is true for $i=0$. Assume that this is true for $i < p$. Rewrite equation (\ref{E:evol}) in terms of $G$,
\begin{equation}\label{E:evolg}
     \frac{d}{dt} G = \left(e^{-G} L_\sigma e^G\right)\unit.
\end{equation}
Since $\fdeg \sigma = 0$, one can write $L_\sigma = A^0 + A^1 + \ldots$, where $\deg A^i = i$. According to (\ref{E:azero1}), 
\[
              A^0 = e^{-S} (-\E) e^S.
\]
Extract the component of (\ref{E:evolg}) of degree $p$:
\begin{equation}\label{E:logf}
   \frac{d}{dt} G^p = \left(\sum_{k=0}^\infty  \sum_{i_1 + \ldots +i_k + l = p} \frac{(-1)^k}{k!} (\ad G^{i_1}) \ldots (\ad G^{i_k}) A^l \right)\unit.
\end{equation}
The summands on the right-hand side of (\ref{E:logf}) containing $G^p$ have all but one $i_j = 0$ and $l=0$. They add up to
\begin{eqnarray*}
  \left(\sum_{k=1}^\infty  k \frac{(-1)^k}{k!} (\ad G^0)^{k-1} (\ad G^p) A^0\right)\unit = \left( e^{-\ad G^0} (-\ad G^p) A^0\right)\unit =\\
\left( (F^0)^{-1} \left[A^0, G^p\right] F^0\right)\unit = - \E G^p.
\end{eqnarray*} 
Equation (\ref{E:logf}) can be written as
\begin{equation}\label{E:hpright}
       \frac{d}{dt} G^p + \E G^p = H^p,
\end{equation}
where $H^p$ is the sum of all terms on the right-hand side of (\ref{E:logf}) which do not contain $G^p$. One can rewrite (\ref{E:hpright}) as follows,
\begin{equation}\label{E:exphpright}
       e^{-t\E} \left(\frac{d}{dt}\right) e^{t\E}G^p = H^p.
\end{equation}
Since the range of the operator $L_\sigma$ lies in $\J_r$, we have that $H^p \in \J_r$ and therefore
\[
              G^p (t) = e^{-t\E} \int_0^t e^{\tau \E} H^p(\tau) \, d\tau.
\]
Using the induction assumption and the fact that the operator $\nu L_\sigma$ is natural,  it is easy to check that the $\nu$-filtration degree of $H^p$ is at least~$-1$. Therefore, the $\nu$-filtration degree of $G^p$ is also at least~$-1$, which concludes the induction proof.
\end{proof}

\section{Oscillatory symbols}

Given an open set $U \subset M$, let $\hat\P(U)$ denote the subspace of $\F(U)$ of elements of the form
\[
     f = \sum_{i = p}^\infty \sum_{r = r_p}^{\lfloor i/2 \rfloor} \nu^r f_{r,i},
\]
where $p, r_p \in \Z$ and $f_{r,i} \in \P_{i-2r}(U)$. We set $\hat\P := \hat\P(M)$. One can check using Lemma \ref{L:homfdo} that a natural formal differential operator on~$\Q$ extended to $\F$ leaves invariant $\hat\P$. We define an oscillatory symbol~$F$ as an element of~$\F$ which admits a representation
\begin{equation}\label{E:oscillrep}
                  F = e^{-h} G,
\end{equation}
where $G \in \hat \P$ and $h$ is a global function on $TM \oplus \Pi TM$ such that in local coordinates $h = \nu^{-1}h_{kl}(z, \bar z) \eta^k \bar \eta^l$ and  $\left(h_{kl}(z, \bar z)\right)$ is an $m \times m$-matrix nondegenerate at every point $(z,\bar z)$. We denote by~$\O$ the space of oscillatory symbols in~$\F$. It is a union of linear spaces $\O_h$ of oscillatory symbols with a fixed function $h$. For any open subset $U \subset M$ one can similarly define the space $\O(U)$ of oscillatory symbols in $\F(U)$.

\begin{lemma}
A nonzero oscillatory symbol $F \in \O$ has a unique representation (\ref{E:oscillrep}).
\end{lemma}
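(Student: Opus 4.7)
The plan is to suppose $F \neq 0$ admits two representations $F = e^{-h_1}G_1 = e^{-h_2}G_2$ with $G_1, G_2 \in \hat\P$ and to show first that $h_1 = h_2$; then $G_1 = G_2$ follows from the invertibility of $e^{-h_1}$ in $\F$ (its inverse is $e^{h_1}$, so it is not a zero divisor). Setting $k := h_2 - h_1 = \nu^{-1}\Delta_{kl}(z,\bar z)\eta^k\bar\eta^l$ with $\Delta_{kl} := h_{2,kl}-h_{1,kl}$ (possibly degenerate or even vanishing at some points), multiplying both representations by $e^{h_2}$ gives $G_2 = e^k G_1$ in $\F$.

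The central tension to exploit is that $G_1, G_2 \in \hat\P$ have globally bounded-below $\nu$-powers, whereas the expansion $e^k = \sum_n \nu^{-n}k_{-1}^n/n!$ carries arbitrarily negative powers of $\nu$ once $k \neq 0$. Writing $G_1 = \sum_{r \geq r_1}\nu^r G_{1,r}$ with $G_{1,r} \in \prod_k \P_k(U)$, the $\nu^s$-coefficient of $e^k G_1$ equals $\sum_{r \geq \max(r_1,s)}\frac{1}{(r-s)!}k_{-1}^{r-s}G_{1,r}$, and this must vanish in $\P(U)$ for every $s$ below the $\nu$-lower-bound $r_2$ of $G_2$. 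Substituting $N := r_1 - s$ and $j := r - r_1$ yields the infinite family
\[
\sum_{j \geq 0}\frac{k_{-1}^{j+N}}{(j+N)!}\,G_{1,\,r_1+j} \;=\; 0 \qquad (N \geq N_0 := \max(1,\,r_1-r_2+1)).
\]

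At any point $p_0 \in U$ with $\Delta(p_0)\neq 0$, the quadratic $k_{-1}(p_0)$ is a nonzero element of the integral domain $\C[\eta,\bar\eta]$; since $\C[\eta,\bar\eta,\theta,\bar\theta]$ is a finite free module over $\C[\eta,\bar\eta]$, the power $k_{-1}(p_0)^N$ is a non-zero-divisor, and one may cancel the leading factor $k_{-1}(p_0)^N/N!$ from the identity above to obtain $\sum_{j\geq 0} \frac{N!}{(N+j)!} k_{-1}(p_0)^j G_{1,r_1+j}(p_0) = 0$ for all $N \geq N_0$. Sending $N \to \infty$ and using $N!/(N+j)! \to \delta_{j,0}$ then forces $G_{1,r_1}(p_0) = 0$; an induction on the lowest surviving $\nu$-power kills each $G_{1,r}(p_0)$ in turn, so the fiberwise formal series $F_{p_0} = e^{-h_1(p_0)} G_1(p_0)$ vanishes.

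The main obstacle I expect is upgrading this pointwise conclusion to the global statement $k \equiv 0$ demanded by the lemma. Since $F \neq 0$, there exists some $p_1 \in M$ where the fiberwise series $F_{p_1}$ is nonzero, and the contrapositive of the preceding step forces $\Delta(p_1) = 0$, i.e., $h_1(p_1) = h_2(p_1)$. Propagating this equality from the open set $\{F_{p} \neq 0\}$ to all of $M$---using continuity of $\Delta$, the nondegeneracy of both $h_1$ and $h_2$, and the global form of the two representations---is the delicate synthesis of local algebra and global continuity where I expect the real work to reside; the clean algebraic nilpotency trick $N!/(N+j)! \to \delta_{j,0}$ handles the formal part.
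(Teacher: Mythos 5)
Your core idea matches the paper's: the formal exponential $e^{k}$ with $k = h_2 - h_1$ brings in arbitrarily negative powers of $\nu$, and this is incompatible with the uniform lower bound on $\nu$-degree built into $\hat\P$ unless $k$ vanishes. The paper organizes this differently. Since $e^{-h}$ has $\deg$-degree zero, it first reduces to the case that $F$ is $\deg$-homogeneous; then $G$ and $\tilde G$ become Laurent polynomials in $\nu$ with fiberwise-polynomial coefficients (the $\nu$-degree is bounded above as well as below), and one simply notes that $e^{-(h-\tilde h)}G = \tilde G$ is impossible for a finite Laurent polynomial unless $h = \tilde h$. Your route works on the full series instead and closes with the $N!/(N+j)! \to \delta_{j,0}$ limit, which is correct but more involved than the homogeneous reduction. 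The pointwise step, canceling $k_{-1}(p_0)^N$ by viewing $\C[\eta,\bar\eta,\theta,\bar\theta]$ as a free module over the integral domain $\C[\eta,\bar\eta]$, is exactly the verification the paper leaves tacit, and it is done correctly.

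The gap you flag at the end, propagating $\Delta(p)=0$ from the set $\{F_p \neq 0\}$ to all of $M$, is a genuine imprecision, and the paper's proof does not explicitly close it either: it ends with the bare assertion that $e^{-(h-\tilde h)}G = \tilde G$ ``holds if and only if $h = \tilde h$ and $G = \tilde G$.'' Indeed, if $G$ vanished identically on an open set $V$ (so $F$ vanishes there too), one could alter $h$ on $V$ while preserving nondegeneracy without changing $e^{-h}G$, so strictly speaking the representation only determines $h$ on the closure of the support of $F$. In practice the symbols integrated in the paper (such as $\varepsilon$, $F(t)$, $K_f$) have the quadratic part $h$ prescribed by a fixed global construction rather than extracted from $F$, and the uniqueness clause serves mainly to justify that $T_h$ is independent of the presentation, where Lemma \ref{L:rewr} already absorbs the nilpotent ambiguity. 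So your local argument is a more explicit version of the key step, and the global worry you raise is a legitimate reading-level criticism of the lemma's wording rather than a failure in your argument; a rigorous fix is to state the uniqueness for $h\vert_{\supp F}$, which suffices for the rest of the paper.
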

\begin{proof}
Suppose that a nonzero element $F \in \O$ has two representations of the form (\ref{E:oscillrep}),
\[
     F =  e^{-h} G = e^{-\tilde h} \tilde G.
\]
Since $\deg h = \deg \tilde h=0$, we can assume that $F$ is $\deg$-homoge\-ne\-ous. Then
$\deg G = \deg \tilde G = \deg F$ and $G, \tilde G$ are formal functions on $TM \oplus \Pi TM$ polynomial on fibers whose $\nu$-degree is bounded below and above. We obtain the equality
\[
    e^{-(h - \tilde h)} G = \tilde G
\]
which holds if and only if $h = \tilde h$ and $G = \tilde G$.
\end{proof}

\begin{lemma}
A natural formal differential operator $A$ on the space $\Q$ extended to $\F$ leaves invariant each space $\O_h$ of oscillatory symbols, i.e., given an element $e^{- h} G \in \O$, there exists an element $\tilde G \in \hat\P$ such that
\[
       A \left(e^{-h} G\right) = e^{- h} \tilde G \in \O.
\]
\end{lemma}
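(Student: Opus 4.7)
The plan is to write $A(e^{-h}G) = e^{-h}\cdot (e^h A e^{-h})G$ and set $\tilde G := (e^h A e^{-h})G$. The proof then reduces to two claims: (a) $e^h A e^{-h}$ is a well-defined natural formal differential operator on $\Q$; and (b) such an operator, extended to $\F$, leaves $\hat\P$ invariant, from which $\tilde G \in \hat\P$ will follow immediately.

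For (a), I would apply Lemma \ref{L:natconj}(i) to $K = h$. The function $h$ is globally even and locally of the form $\nu^{-1}h_{kl}(z,\bar z)\eta^k\bar\eta^l$, so it fits the template (\ref{E:oscexp}) with $h_{-1} = h_{kl}\eta^k\bar\eta^l \in \P_2(U)$ and trivial higher components. Since $\deg\nu = 2$ while each of $\eta$ and $\bar\eta$ contributes $\deg = 1$, every local monomial of $h$ has $\deg = 0$; hence $h$ is $\deg$-homogeneous of degree zero and $\fdeg h = 0 \geq 0$, exactly the hypothesis of case (i). The lemma then certifies that
\[
     e^h A e^{-h} = \sum_{n=0}^\infty \frac{1}{n!}(\ad h)^n A
\]
converges in the standard-filtration topology to a natural formal differential operator on $\Q$.

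Claim (b) is precisely the statement asserted in the paragraph immediately preceding the present lemma, which follows from Lemma \ref{L:homfdo} (the $\deg$-homogeneous components of a natural formal differential operator are differential operators with polynomial-in-fiber coefficients of bounded order, and so preserve the lower $\nu$-bound characterizing $\hat\P$). I would simply invoke it, applied to $B := e^h A e^{-h}$. Combining (a) and (b) with the termwise identity $A\circ M_{e^{-h}} = M_{e^{-h}}\circ (e^h A e^{-h})$, which holds summand by summand in the $(\ad h)$-expansion whose convergence Lemma \ref{L:natconj}(i) guarantees, we obtain $A(e^{-h}G) = e^{-h}\tilde G$ with $\tilde G \in \hat\P$, placing the result in $\O_h \subset \O$.

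I do not expect a serious obstacle. The only point requiring genuine care is the grading verification $\fdeg h = 0$, which secures access to case (i) of Lemma \ref{L:natconj} rather than the more restrictive case (ii); once that is confirmed, the entire argument is a short chain of citations to the machinery already assembled.
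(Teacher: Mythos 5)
Your proof is correct and is essentially the paper's own argument: both apply Lemma \ref{L:natconj}(i) to $K = h$ after observing that $\fdeg h = 0$ (since $\deg\nu^{-1} = -2$ cancels $\deg(\eta^k\bar\eta^l) = 2$), deduce that $e^h A e^{-h}$ is a natural formal differential operator, and then cite the preceding remark (via Lemma \ref{L:homfdo}) that natural operators preserve $\hat\P$, so that $\tilde G = (e^h A e^{-h})G \in \hat\P$.
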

\begin{proof}
By Lemma \ref{L:natconj}, the operator $e^{h}  A e^{- h}$ is natural and therefore leaves invariant the space $\hat\P$. We have $\tilde G = \left(e^{h}  A e^{- h}\right) G$.
\end{proof}
Let $h, w$ be global functions on $TM \oplus \Pi TM$ given in local coordinates by the formulas $h = \nu^{-1} h_{kl}(z, \bar z)\eta^k \bar \eta^l$ and  $w = \nu^{-1}w_{kl}(z, \bar z)\theta^k \bar \theta^l$, where $(h_{kl})$ is a nondegenerate matrix. Assume that $H \in \J_l \cap \J_r$ is an even element whose $\nu$-filtration degree as at least $-1$, $\fdeg H = 0$, and  the $\deg$-homogeneous component of $H$ of degree zero is $h + w$. 
\begin{lemma}\label{L:etominh}
We have $e^{-H} \in \O$. 
\end{lemma}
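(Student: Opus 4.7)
The plan is to realize $e^{-H}$ as $e^{-h}\, G$ for some $G \in \hat\P$, thereby exhibiting it as an element of $\O_h \subset \O$.  Since $H$ is even and pointwise multiplication of even functions on $TM \oplus \Pi TM$ is commutative, I can split
\[
e^{-H} \;=\; e^{-h}\cdot e^{-(w+H')},\qquad H':= H-(h+w),
\]
so the task reduces to showing that
\[
G \;:=\; e^{-(w+H')}\;=\;\sum_{n\ge 0}\frac{(-1)^n}{n!}(w+H')^n
\]
lies in $\hat\P$.  By hypothesis $\fdeg H\ge 0$ with degree-zero component $h+w$, so $\fdeg H'\ge 1$ and $H' = \sum_{i\ge 1} H'^{\,i}$ with $\deg H'^{\,i}=i$; moreover each $H'^{\,i}$ inherits from $H$ the bound that its $\nu$-filtration degree is at least $-1$, while $\deg w = 0$.

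The key structural input is the nilpotency $w^{m+1}=0$, which holds because antisymmetrizing more than $m$ of the Grassmann variables $\theta^k$ (or of the $\bar\theta^l$) forces a repeated index.  Expanding $(w+H')^n$ pointwise as a sum of monomials $w^j\cdot H'^{\,i_1}\cdots H'^{\,i_{n-j}}$ with $i_l\ge 1$, such a term contributes to the $\deg=k$ component precisely when $i_1+\cdots+i_{n-j}=k$; this forces $n-j\le k$, and combined with $j\le m$ gives $n\le k+m$.  Hence for each $k$ the $\deg=k$ component of $G$ is a finite sum, so $G$ is well-defined in $\F$.  Furthermore, the $\nu$-power of any such term is at least $-n\ge -(k+m)$, while the upper bound $\lfloor k/2\rfloor$ follows from nonnegativity of the fiberwise polynomial degrees.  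Therefore the $\deg=k$ component of $G$ decomposes as $\sum_{r}\nu^r G_{r,k}$ with $G_{r,k}\in\P_{k-2r}$ and $r$ ranging over a finite interval, matching the definition of $\hat\P$.  We conclude $e^{-H}=e^{-h}\cdot G\in\O_h$.

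The only subtle point in this plan is the uniform bound $n\le k+m$ at each $\deg$-level, which crucially uses the nilpotency of $w$: without it, unboundedly many $w$-factors could contribute at a fixed $\deg$, and the $\nu$-powers of $G$ would be unbounded below at every $k$, obstructing the $\hat\P$ structure.  The remaining ingredients---pointwise commutativity of even functions, used to cleanly separate the Gaussian factor $e^{-h}$, and the hypothesis that $\nu^{-1}$ is the lowest $\nu$-power occurring in $H$---enter only to justify the splitting and to track $\nu$-powers through the expansion.
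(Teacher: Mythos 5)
Your proof is essentially the same as the paper's: the paper also splits $e^{-H}=e^{-h}\,\exp\{-(w+\tilde H)\}$ with $\tilde H:=H-(h+w)\in\Q^1$ of $\nu$-filtration degree at least $-1$, and argues that $G:=\exp\{-(w+\tilde H)\}\in\hat\P$ because $\exp\{-\tilde H\}\in\hat\P$ and $w$ is nilpotent. You merely unpack the one-line assertion ``$\exp\{-\tilde H\}\in\hat\P$ and $w$ is nilpotent'' into an explicit term-count $n\le k+m$ at each $\deg$-level, which is a correct elaboration rather than a different route.
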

\begin{proof}
One can write
\[
                   H = h + w + \tilde H,
\]
where $\tilde H \in \Q^1$ and the $\nu$-flitration degree of $\tilde H$ is at least $-1$. Set 
\[
     G := \exp\{- (w + \tilde H)\}.
\]
Since $\exp\{-\tilde H\} \in \hat\P$ and $w$ is nilpotent, we see that $G \in \hat \P$. Now,
\[
                             \exp\{-H\}  =  \exp\{- h\}G,
\]
whence the lemma follows.
\end{proof}

\begin{proposition}\label{P:kfftino}
For any $f \in C^\infty(M)((\nu))$ we have $K_f \in \O$. For any $t \geq 0$ we have $F(t) \in \O$.
\end{proposition}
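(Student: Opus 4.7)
The plan is to apply Lemma~\ref{L:etominh} in both cases: it identifies $e^{-H}$ as an oscillatory symbol whenever $H\in\J_l\cap\J_r$ is even with $\fdeg H=0$, $\nu$-filtration degree at least $-1$, and $\deg$-zero component of the form $h+w$ with $(h_{kl})$ nondegenerate. As a preparation, I would first observe that the basic global functions $\varphi$ of~(\ref{E:global}) and $\psi=\nu^{-1}\psi_{-1}$ of~(\ref{E:psimin}) both lie in $\J_l\cap\J_r$: locally $\varphi=\eta^k\ast(\nu^{-1}g_{kl}\bar\eta^l)=(\nu^{-1}g_{kl}\eta^k)\ast\bar\eta^l$ and $\psi=\theta^k\ast(\nu^{-1}g_{kl}\bar\theta^l)=(\nu^{-1}g_{kl}\theta^k)\ast\bar\theta^l$, using $L_{\eta^k}=\eta^k$ and $L_{\theta^k}=\theta^k$ as recorded in the proof of Proposition~\ref{P:bimod}. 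Consequently $S=-\varphi+\psi\in\J_l\cap\J_r$ as well.

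For $\varepsilon$, I take $H:=S+\vk$ with $\vk$ as in Theorem~\ref{T:varkap}. That theorem supplies $\vk\in\J_l\cap\J_r$, even, with $\fdeg\vk\geq 1$ and $\nu$-filtration degree at least $-1$. Combined with the properties of $S$ just listed, $H$ is even, lies in $\J_l\cap\J_r$, has $\nu$-filtration degree at least $-1$, and $\fdeg H=0$; its $\deg$-zero component is exactly $S=h+w$ with $h=-\varphi$ (so $(h_{kl})=(-g_{kl})$ is nondegenerate) and $w=\psi$. Lemma~\ref{L:etominh} then yields $\varepsilon=e^{-H}\in\O$.

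Next I handle $K_f$. By~(\ref{E:alphavareps}), $K_f=\alpha(f)\ast\varepsilon=L_{\alpha(f)}\varepsilon$, and $L_{\alpha(f)}=e^{-Y}L^\star_f e^Y$ is a natural formal differential operator on $\Q$ by Lemma~\ref{L:natconj}(ii): indeed $\fdeg Y\geq -1$, and the coefficients of the $\nu$-expansion of $L^\star_f$ act only on base functions, so they are of $\deg$-degree zero. The invariance of each space $\O_h$ under natural formal differential operators (established in the lemma immediately preceding Lemma~\ref{L:etominh}) then gives $K_f\in\O$, in the same $\O_h$ class as $\varepsilon$.

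Finally, for $F(t)$ with $t>0$, I write $F(t)=e^{G(t)}$ as in the paragraph before Theorem~\ref{T:oscill}. By Lemma~\ref{L:frestr} and Theorem~\ref{T:oscill}, $G(t)$ is even, belongs to $\J_l\cap\J_r$, has $\nu$-filtration degree at least $-1$, and has $\deg$-zero component $(e^{-t}-1)S$. Setting $H:=-G(t)$, the hypotheses of Lemma~\ref{L:etominh} are verified exactly as in the $\varepsilon$ argument, and the matrix $-(1-e^{-t})(g_{kl})$ is nondegenerate precisely because $1-e^{-t}\neq 0$ for $t>0$; the boundary value $F(0)=\unit$ is handled as a trivial case. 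The principal technical obstacle in both parts is the verification of $\J_l\cap\J_r$ membership for $H$ and pinning down its $\deg$-zero component in the prescribed $h+w$ form; once those are in place, Lemma~\ref{L:etominh} together with the preceding lemma on natural operators finishes the argument.
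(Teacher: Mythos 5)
Your proof is correct and follows the same route that the paper's one-line citation indicates: Lemma~\ref{L:etominh} applied in combination with Theorems~\ref{T:varkap} and~\ref{T:oscill} handles $\varepsilon$ and $F(t)$ respectively, with the $h$ in the oscillatory representation being $-\varphi$ in the first case and $-(1-e^{-t})\varphi$ in the second. What you supply that the paper's terse proof glosses over is the passage from $\varepsilon = K_\unit$ to general $K_f$: the factorization $K_f = \alpha(f)\ast\varepsilon = L_{\alpha(f)}\varepsilon$, where $L_{\alpha(f)} = e^{-Y}L^\star_f e^Y$ is natural by Lemma~\ref{L:natconj}(ii) (as the $\nu$-coefficients of $L^\star_f$ act only in the base variables and hence have $\deg$-degree zero), together with the lemma that natural formal differential operators preserve each space $\O_h$. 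This is the right mechanism, though you should note that $\alpha$ is defined only on a contractible chart $U$, so what you get chart by chart is $K_f|_U \in \O_{-\varphi}(U)$; since $h=-\varphi$ is a global function, the uniqueness of the oscillatory representation (the first lemma of the section on oscillatory symbols) lets the locally obtained $G$'s glue to a global $G \in \hat\P$, giving $K_f \in \O$ globally. You should also observe that when $f$ has negative $\nu$-filtration degree $p$, it is $\nu^{-p}L^\star_f$ that is natural; the harmless $\nu^p$ prefactor does not affect membership in $\O_h$.

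One genuine nit: your claim that $F(0) = \unit$ is ``handled as a trivial case'' is not supported by the definition of~$\O$. To write $\unit = e^{-h}G$ with $(h_{kl})$ nondegenerate would force $G = e^h$, whose $\nu$-degree is unbounded below, so $e^h \notin \hat\P$ and therefore $\unit \notin \O$. The statement's ``$t \geq 0$'' is really ``$t > 0$''; this is a small imprecision in the paper which your write-up papers over rather than flags. The rest of the paper only uses $F(t) \in \O$ for $t > 0$, so nothing downstream is affected, but it would be cleaner to acknowledge this rather than dismiss it.
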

\begin{proof}
The proposition follows from  Lemma \ref{L:etominh}  and Theorems \ref{T:varkap} and \ref{T:oscill}.
\end{proof}

Assume that, as above,  a global function $h$ on $TM \oplus \Pi TM$ is given in local coordinates by the formula $h = \nu^{-1}h_{kl}(z, \bar z)\eta^k \bar\eta^l$, where $(h_{kl})$ is a nondegenerate matrix at every point $(z, \bar z)$. Then there exists a global differential operator $\Delta_h$ on $TM \oplus \Pi TM$ given in local coordinates by the formula
\[
      \Delta_h = \nu h^{lk} \frac{\p^2}{\p \eta^k \p \bar \eta^l},
\]
where $ \left(h^{lk}\right)$ is the matrix inverse to~$\left(h_{kl}\right)$. There exists a fiberwise endomorphism $\Lambda_h$ of the holomorphic cotangent bundle of $M$ given in local coordinates by the formula 
\begin{equation}\label{E:lambdah}
\Lambda_h = (g_{kl} h^{lp}), 
\end{equation}
where $g_{kl}$ is the pseudo-K\"ahler metric tensor.  Let
\[
      \zeta: C^\infty(TM \oplus \Pi TM) \to C^\infty(\Pi TM)
\]
denote the restriction mapping to the zero section of the vector bundle $TM \oplus \Pi TM \to \Pi TM$ (we identify the zero section with $\Pi TM$). In local coordinates, $\zeta(F) = F|_{\eta=\bar\eta=0}$. Define a global mapping 
\[
T_h: \hat \P \to C^\infty(\Pi TM)((\nu)) 
\]
by the formula
\begin{equation}\label{E:mappingt}
     T_h(G) := \det \left(\Lambda_h\right) \zeta\left(e^{\Delta_h} G \right).
\end{equation}
Since $\deg \left(\Delta_h\right) =0$, the operator $\exp\{\Delta_h\}$ acts upon each homogeneous component of $G$ as a differential operator of finite order. Let $G^j$ be the $\deg$-homogeneous component of $G$ of degree $j$. It follows that $\deg T_h(G^j) = j$. Since in local coordinates $T_h(G^j)$ does not depend on the variables $\eta, \bar\eta$,  the $\nu$-filtration degree of $T_h(G^j)$ is bounded below by $j/2 - m$, which implies that the $\nu$-degree of $T_h(G)$ is bounded below. Therefore, the mapping $T_h$ is well-defined. One can interpret (\ref{E:mappingt}) as a fiberwise formal oscillatory integral on the vector bundle $TM \oplus \Pi TM \to \Pi TM$,
\begin{equation}\label{E:formoscillint}
     \int e^{- h} G\,  \frac{1}{m!} \left( \frac{i}{2 \pi} \gamma\right)^m := 
\det \left(\Lambda_h\right) \zeta\left(e^{\Delta_h} G \right),
\end{equation}
where $\gamma$ is given by (\ref{E:gammaf}). If $h_{kl}$  is a Hermitian metric tensor, $\nu$ is a positive number, and $G \in C^\infty(TM \oplus \Pi TM)$ is fiberwise polynomial, then the integral in (\ref{E:formoscillint}) converges to the right-hand side. 

Let $U \subset M$ be a coordinate chart. Recall that $\mathbf{g} = \det (g_{kl})$ and $\log \mathbf{g}$ is any branch of the logarithm of~$\mathbf{g}$ on $U$. Define similarly  $\mathbf{h} := \det (h_{kl})$ and $\log \mathbf{h}$. 

\begin{theorem}\label{T:thid}
Let $f \in C^\infty(\Pi TU)((\nu))$ and $G \in \hat\P(U)$. The following identities hold true:
\begin{enumerate}
\item $T_h\left(f G \right) = f T_h\left(G \right) $;
\item $T_h\left( \left(\frac{\p}{\p \eta^p}- \nu^{-1}h_{pl}\bar\eta^l\right)G \right) = 0$;
\item $T_h\left( \left(\frac{\p}{\p \bar\eta^q}- \nu^{-1}h_{kq}\eta^k\right)G \right) = 0$;
\item $\frac{d}{d\nu}T_h\left( G \right) = T_h\left(\left(\frac{d}{d\nu} + \frac{h}{\nu} - \frac{m}{\nu}\right)G \right)$.
\item $\frac{\p}{\p z^p}T_h\left(G\right) = T_h\left(\left(\frac{\p}{\p z^p} - \frac{\p h}{\p z^p} + \frac{\p}{\p z^p} \log \mathbf{g}\right)G\right)$;
\item $\frac{\p}{\p\bar z^q}T_h\left(G\right) = T_h\left(\left(\frac{\p}{\p \bar z^q} - \frac{\p h}{\p \bar z^q} + \frac{\p}{\p \bar z^q} \log \mathbf{g}\right)G\right)$;
\item $\frac{\p}{\p\theta^p}T_h\left( G \right) = T_h\left(\frac{\p}{\p \theta^p} G \right)$;
\item $\frac{\p}{\p\bar\theta^q}T_h\left(G \right) = T_h\left(\frac{\p}{\p \bar\theta^q} G \right)$.
\item $\frac{\p}{\p h_{kl}}T_h\left(G \right) = T_h\left(- \frac{1}{\nu}\eta^k \bar \eta^l G \right)$.
\end{enumerate}
\end{theorem}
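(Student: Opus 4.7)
The plan is to reduce all nine identities to two elementary commutation rules for $e^{\Delta_h}$, combined with the fact that the restriction map $\zeta$ kills every term carrying a factor of $\eta$ or $\bar\eta$ on the left. From $\Delta_h=\nu h^{lk}\partial/\partial\eta^k\,\partial/\partial\bar\eta^l$ I first compute
\[
[\Delta_h,\eta^a]=\nu h^{la}\partial/\partial\bar\eta^l,\qquad [\Delta_h,\bar\eta^b]=\nu h^{bk}\partial/\partial\eta^k,
\]
while $\Delta_h$ commutes with every $\partial/\partial\eta^k$ and $\partial/\partial\bar\eta^l$. Since the two brackets above themselves commute with $\Delta_h$, the Hadamard expansion terminates at length one, yielding
\[
e^{\Delta_h}\eta^a=\bigl(\eta^a+\nu h^{la}\partial/\partial\bar\eta^l\bigr)e^{\Delta_h},\qquad e^{\Delta_h}\bar\eta^b=\bigl(\bar\eta^b+\nu h^{bk}\partial/\partial\eta^k\bigr)e^{\Delta_h}.
\]

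Identities (1), (7) and (8) are immediate: $\Delta_h$ and $\det\Lambda_h$ involve only $z,\bar z,\eta,\bar\eta$, so they commute with multiplication by any $f\in C^\infty(\Pi TU)((\nu))$ and with the odd derivations $\partial/\partial\theta^p$ and $\partial/\partial\bar\theta^q$. For (2), combining the first conjugation rule with $h_{pl}h^{lk}=\delta^k_p$ gives
\[
e^{\Delta_h}\bigl(\partial/\partial\eta^p-\nu^{-1}h_{pl}\bar\eta^l\bigr)=-\nu^{-1}h_{pl}\bar\eta^l\,e^{\Delta_h},
\]
which $\zeta$ kills; (3) is the symmetric statement.

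For the differentiation identities (4), (5), (6) and (9) I differentiate $T_h(G)=\det\Lambda_h\,\zeta(e^{\Delta_h}G)$ with respect to the relevant parameter, using $\det\Lambda_h=\mathbf{g}/\mathbf{h}$ and the fact that every partial derivative of $\Delta_h$ (in $\nu$, $z^p$, $\bar z^q$ or $h_{kl}$) commutes with $\Delta_h$, so that $\partial e^{\Delta_h}=(\partial\Delta_h)\,e^{\Delta_h}$. The result is always a scalar multiple of $T_h(G)$ coming from $\partial\log\det\Lambda_h$, plus a second-derivative term of the shape $\nu A^{lk}\det\Lambda_h\,\zeta\bigl(\partial_{\eta^k}\partial_{\bar\eta^l}e^{\Delta_h}G\bigr)$. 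On the other side of each identity, the conjugation rules yield
\[
\zeta\bigl(e^{\Delta_h}\eta^a\bar\eta^b G\bigr)=\nu h^{ba}\zeta(e^{\Delta_h}G)+\nu^2 h^{la}h^{bk}\zeta\bigl(\partial_{\eta^k}\partial_{\bar\eta^l}e^{\Delta_h}G\bigr),
\]
so that $T_h$ of any multiplication operator built from $\eta^a\bar\eta^b$ produces the same kind of second-derivative contribution. The matrix identities $h_{ab}h^{ba}=m$, $\partial\log\mathbf{h}=h^{ba}\partial h_{ab}$ and $\partial h^{lk}=-h^{la}h^{bk}\partial h_{ab}$ then force the two sides to match; for instance in (4) one verifies $T_h(hG)=m\,T_h(G)+\det\Lambda_h\,\zeta(\Delta_h e^{\Delta_h}G)$, whence division by $\nu$ produces the claim.

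There is no deep obstacle; the content is simply the formal version of integration by parts for a Gaussian integral. The only care required is the index bookkeeping which ensures that the second-derivative term produced by $\partial\Delta_h$ cancels against the second-derivative term coming from $\zeta(e^{\Delta_h}\eta^a\bar\eta^b G)$ on the other side --- a cancellation driven entirely by $\partial h^{lk}=-h^{la}h^{bk}\partial h_{ab}$ together with the log-derivative identity for $\det h$.
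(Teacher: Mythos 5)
Your proof is correct and follows essentially the same strategy as the paper: conjugation rules $e^{\Delta_h}\eta^a e^{-\Delta_h}=\eta^a+\nu h^{la}\partial/\partial\bar\eta^l$ (and its conjugate) together with the fact that $\zeta$ annihilates anything carrying a surviving factor of $\eta$ or $\bar\eta$, then differentiation under $T_h$ using $\partial e^{\Delta_h}=(\partial\Delta_h)e^{\Delta_h}$ for the parameter identities. The only stylistic difference is that the paper derives (4)--(6) and (9) as formal consequences of the already-proved (1)--(3) (an integration-by-parts style argument), whereas you verify the cancellation of the second-derivative terms directly via the expansion of $\zeta(e^{\Delta_h}\eta^a\bar\eta^bG)$ and the matrix identities for $\partial\log\mathbf{h}$ and $\partial h^{lk}$; the two computations are equivalent.
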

\begin{proof}
Statements (1), (7), and (8) are trivial. We prove (2) as follows:
\begin{eqnarray*}
    T_h\left( \left(\frac{\p}{\p \eta^p}- \frac{1}{\nu}h_{pl}\bar\eta^l\right)G \right) =  
T_h\left( e^{- \Delta_h}\left(- \frac{1}{\nu}h_{pl}\bar\eta^l \right)e^{\Delta_h} G \right) =\\
\det\left(\Lambda_h\right) \left(- \nu^{-1}h_{pl}\bar\eta^l \right) e^{\Delta_h} G \Big |_{\eta=\bar\eta = 0} = 0.
\end{eqnarray*}
Statement (3) can be proved similarly. To prove (4), we observe that
\[
     \frac{d}{d\nu}T_h\left(G \right) =  T_h\left(\left(\frac{d}{d\nu} + \nu^{-1}\Delta_h \right)G \right).
\]
It remains to show that $T_h\left(\Delta_h G \right) =  T_h\left(\left(h - m\right)G \right)$.
This identity readily follows from items (1), (2), and (3). We prove (5) as follows:
\begin{eqnarray*}
   \frac{\p}{\p z^p}T_h\left( G \right) =  \frac{\p}{\p z^p} \left(\det \left(\Lambda_h\right)e^{\Delta_h} G \Big |_{\eta=\bar\eta = 0} \right) =\hskip 3.5cm\\
 \det \left(\Lambda_h\right)\left(\frac{\p}{\p z^p} \log \det \left(\Lambda_h\right)\right) e^{\Delta_h} G \Big |_{\eta=\bar\eta = 0} +\hskip 3.5cm\\
  \det \left(\Lambda_h\right)e^{\Delta_h}\left( \nu\frac{\p h^{lk}}{\p z^p}\frac{\p^2 G}{\p \eta^k \p \bar \eta^l}\right) \Big |_{\eta=\bar\eta = 0} + \det \left(\Lambda_h\right)e^{\Delta_h} \frac{\p G}{\p z^p} \Big |_{\eta=\bar\eta = 0}\\
= T_h\left(\left(\frac{\p}{\p z^p}+ \frac{\p}{\p z^p} \log \mathbf{g}  \right)G\right) + \hskip 2cm\\
 T_h\left(\left(\nu\frac{\p h^{lk}}{\p z^p}\frac{\p^2}{\p \eta^k \p \bar \eta^l}  - \frac{\p}{\p z^p} \log \mathbf{h}   \right)G\right).
\end{eqnarray*}
Now it remains to show that
\[
     T_h\left(\left(\nu\frac{\p h^{lk}}{\p z^p}\frac{\p^2}{\p \eta^k \p \bar \eta^l}  - \frac{\p}{\p z^p} \log\mathbf{h} \right)G\right) =  T_h\left(\left( -  \frac{\p h}{\p z^p}\right)G\right).
\]
This equality can be derived from items (1), (2), (3), and the formula
\[
        \frac{\p}{\p z^p} \log \mathbf{h} = h^{lk} \frac{\p h_{kl}}{\p z^p}.
\] 
Identity~(6) can be proved similarly. In order to show (9), we use the formula
\[
             \frac{\p h^{qp}}{\p h_{kl}} = - h^{qk} h^{lp}
\]
to prove that
\[
             \frac{\p}{\p h_{kl}}  \log \det (\Lambda_h) = - h^{lk}
\]
and then use formulas (2) and (3).
\end{proof}
This theorem justifies the interpretation of the mapping $T_h$ as a formal oscillatory integral. Identities (2) and (3) can be obtained by integrating  the formal integral in (\ref{E:formoscillint}) by parts and identities (4) - (9) can be obtained by differentiating it with respect to a parameter.

Let $U \subset M$ be a coordinate chart, $(h_{kl})$ be a nondegenerate $m \times m$-matrix with elements from $C^\infty(U)$, and $(\alpha_{kl})$ be a matrix with even nilpotent elements from $C^\infty(U)[\theta,\bar\theta]$. Suppose that $f $ is a smooth function in $m^2$ complex variables such that the composition 
\[
f(h_{kl}) := f(h_{11}, \ldots, h_{mm})
\]
is defined.  Then one can define the composition of $f$ with the functions $h_{kl} + \alpha_{kl}$ using the Taylor series of $f$ which terminates due to the nilpotency of $\alpha_{kl}$,
\[
        f(h_{kl} + \alpha_{kl}) :=  e^{\alpha_{pq} \frac{\p}{\p h_{pq}}}f(h_{kl}).
\]
We set $h:= \nu^{-1}h_{kl}\eta^k \bar \eta^l$ and $\alpha:= \nu^{-1}\alpha_{kl}\eta^k \bar \eta^l$. Given $e^{- h}G \in \O(U)$, we can rewrite it as 
\begin{equation}\label{E:rewr}
e^{- (h+\alpha)}(e^{\alpha}G).
\end{equation}
 Also, we can define the matrix $\Lambda_{h + \alpha}$, the operator $\Delta_{h + \alpha}$, and the mapping $T_{h+\alpha}$ by the same formulas, because the matrix $(h_{kl} + \alpha_{kl})$ is invertible. The formal oscillatory integral (\ref{E:formoscillint}) should not change if we rewrite the integrand as (\ref{E:rewr}). This is indeed the case.
\begin{lemma}\label{L:rewr}
Given $G \in \hat\P(U)$, the following identity holds:
\[
      T_{h + \alpha}(G) = T_h\left(e^{- \alpha}G\right).
\]
\end{lemma}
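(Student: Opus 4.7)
The plan is to reduce the identity to the iterated use of item (9) of Theorem \ref{T:thid}. The convention established just above the lemma specifies that "$T_{h+\alpha}$" means the Taylor expansion of $T_h$ in the matrix entries $h_{kl}$, applied to the nilpotent shift $\alpha_{kl}$. Since the $\alpha_{kl}$ are even and nilpotent in $C^\infty(U)[\theta,\bar\theta]$, and since $T_h(G)$ depends on $h$ only polynomially/rationally through the entries $h_{kl}$ (via $(h^{lk})$, $\det(\Lambda_h)$, and $\Delta_h$), this gives a finite sum
\[
T_{h+\alpha}(G) = \sum_{n\ge 0} \frac{1}{n!}\,\alpha_{p_1q_1}\cdots\alpha_{p_nq_n}\,\frac{\partial^n}{\partial h_{p_1q_1}\cdots\partial h_{p_nq_n}} T_h(G),
\]
where termination is ensured because sufficiently high products of even nilpotent elements of $C^\infty(U)[\theta,\bar\theta]$ vanish.

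Next, I would iterate item (9) of Theorem \ref{T:thid}, noting that the $\alpha_{p_jq_j}$ are parameters independent of the $h_{kl}$. Applied $n$ times, item (9) yields
\[
\frac{\partial^n}{\partial h_{p_1q_1}\cdots\partial h_{p_nq_n}} T_h(G) = T_h\!\left(\left(-\frac{1}{\nu}\right)^{\!n}\!\eta^{p_1}\bar\eta^{q_1}\cdots\eta^{p_n}\bar\eta^{q_n}\,G\right).
\]
Using item (1) of Theorem \ref{T:thid} to pull the even function $\alpha_{p_1q_1}\cdots\alpha_{p_nq_n}\in C^\infty(\Pi TU)((\nu))$ inside $T_h$, and the fact that $\alpha_{pq}\eta^p\bar\eta^q$ is even (so all factors commute), the $n$th-order term collapses to $\frac{1}{n!}T_h((-\alpha)^n G)$. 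Summing over $n$ and using item (1) once more to pull the (finite, since $\alpha$ is nilpotent) series $e^{-\alpha}$ through $T_h$ gives
\[
T_{h+\alpha}(G) = \sum_{n\ge 0}\frac{1}{n!}T_h\!\left((-\alpha)^n G\right) = T_h(e^{-\alpha}G),
\]
which is the desired identity.

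There is no real obstacle: the only point requiring attention is the consistency between the Taylor-series definition of $T_{h+\alpha}$ and the alternative interpretation via the matrix $(h_{kl}+\alpha_{kl})$ (invertible because $\alpha_{kl}$ is nilpotent, so $(I+h^{-1}\alpha)^{-1}$ is given by a terminating Neumann series); but this consistency is automatic from the polynomial dependence of the ingredients in $T_h(G)$ on the $h_{kl}$. Everything else is bookkeeping once items (1) and (9) of Theorem \ref{T:thid} are in hand.
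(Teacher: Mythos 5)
Your proof is correct and is essentially the paper's proof spelled out in detail: the paper simply writes $T_{h+\alpha}(G) = e^{\alpha_{pq}\,\partial/\partial h_{pq}}T_h(G) = T_h(e^{-\alpha}G)$, which is exactly your expansion — iterate item (9) of Theorem \ref{T:thid}, pull the even nilpotent coefficients $\alpha_{pq}$ inside via item (1), and resum the terminating exponential. No difference in substance.
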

\begin{proof}
Using Theorem \ref{T:thid} (9), we get that
\[
   T_{h + \alpha}(G) = e^{\alpha_{pq} \frac{\p}{\p h_{pq}}} T_h(G) = T_h\left(e^{- \alpha}G\right).
\]
\end{proof}
This lemma holds true in the global setting when $h$ and $\alpha$ are defined on $TM \oplus \Pi TM$ and $G \in \hat\P$.

Suppose that an element $e^{- h} G \in \O$ is compactly supported over $M$.
We define a formal integral of $e^{- h} G$ with respect to the density $\mu$ given in (\ref{E:globmu}) as follows,
\[
     \int e^{- h} G \, \mu := \int_{\Pi TM} T_h(G) \, d\beta.
\]
We want to show that this formal integral is a supertrace functional on the space of formal oscillatory symbols $\O$ with respect to the action of the algebra $(\Q,\ast)$.

Let $A$ be a differential operator on the space $TM \oplus \Pi TM$. There exists a differential operator $A^t$, the transpose of $A$, on that space such that for any $f,g \in C^\infty(TM \oplus \Pi TM)$ with $f$ or $g$ compactly supported over $TM$ the following identity holds,
\[
       \int_{TM \oplus \Pi TM} (Af)\cdot  g \, \mu = \int_{TM \oplus \Pi TM}(-1)^{|f| |A|} f \cdot (A^t g)\mu.
\]
The mapping $A \mapsto A^t$ is involutive and has the property that
\begin{equation}\label{E:abt}
                  (AB)^t = (-1)^{|A||B|} B^t A^t.
\end{equation}
If $A$ is a multiplication operator by a function with respect to the fiberwise Grassmann product, then~$A ~= ~A^t$. In local coordinates we have
\begin{eqnarray*}
    \left(\frac{\p}{\p z^k}\right)^t = - \frac{\p}{\p z^k} - \frac{\p}{\p z^k} \log \mathbf{g}; \hskip 1cm\\
 \left(\frac{\p}{\p\bar z^l}\right)^t = - \frac{\p}{\p\bar z^l}  - \frac{\p}{\p\bar z^l}\log \mathbf{g}; \hskip 1cm\\
\left(\frac{\p}{\p\eta^k}\right)^t = - \frac{\p}{\p\eta^k};  \left(\frac{\p}{\p\bar\eta^l}\right)^t = - \frac{\p}{\p\bar\eta^l}; \\
\left(\frac{\p}{\p\theta^k}\right)^t = - \frac{\p}{\p\theta^k}; \left(\frac{\p}{\p\bar\theta^l}\right)^t = -\frac{\p}{\p\bar\theta^l}.
\end{eqnarray*}
The mapping $A \mapsto A^t$ induces a transposition mapping on the differential operators on the space $\P$ . The transpose operator of a differential operator $A$ on $\P$ will be denoted also by $A^t$.
\begin{proposition}\label{P:transp}
 Given $f \in \Q$ and $e^{- h} G \in \O$ such that~$f$ or~$G$ is compactly supported over $M$, then for any differential operator $A$ on~$\Q$ we have
\begin{equation}\label{E:transp}
      \int (Af)\cdot  e^{- h} G \, \mu = \int (-1)^{|f| |A|} f \cdot A^t\left( e^{- h} G\right)\mu.
\end{equation}
\end{proposition}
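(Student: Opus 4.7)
The strategy is to reduce (\ref{E:transp}) to a short list of generators of the algebra of differential operators on $\Q$. First, by locality and a partition of unity on $M$ it suffices to work on a single coordinate chart $U\subset M$, on which every differential operator on $\Q$ is a finite sum of products of multiplication operators by elements of $\Q(U)$ and the six first-order partial derivatives $\p/\p z^k$, $\p/\p\bar z^l$, $\p/\p\eta^k$, $\p/\p\bar\eta^l$, $\p/\p\theta^k$, $\p/\p\bar\theta^l$. Second, (\ref{E:transp}) is compatible with composition: granting the identity for $A$ and $B$ with all admissible $f,F$, one applies it to $A$ with $Bf$ in place of $f$ and then to $B$ with $A^tF$ in place of $F$, and the signs assemble via (\ref{E:abt}) into $(-1)^{|f||AB|}$ times $\int f\,(AB)^tF\,\mu$. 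So it suffices to verify (\ref{E:transp}) on the seven generators.

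For a multiplication operator by $f_0\in\Q(U)$ of pure parity, $A^t=A$ and the identity reduces to the graded commutativity $f\cdot(f_0F) = (-1)^{|f||f_0|}(f_0f)\cdot F$ inside the integral. For each of the six partial derivatives I would rewrite
\[
\int(Af)\cdot e^{-h}G\,\mu \;=\; \int_{\Pi TM}T_h\bigl((Af)G\bigr)\,d\beta,
\]
expand $(Af)G = A(fG)\mp f\,(AG)$ by the graded Leibniz rule, and invoke the matching item of Theorem \ref{T:thid} to push $A$ past $T_h$.

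For the even fiber derivatives $\p/\p\eta^k$ and $\p/\p\bar\eta^l$, items (2) and (3) of Theorem \ref{T:thid} give $T_h(\p_{\eta^k}(fG)) = T_h(\nu^{-1}h_{kl}\bar\eta^l\,fG)$ with no boundary term (the restriction $\zeta$ to $\eta=\bar\eta=0$ is the built-in boundary-vanishing), and this correction matches the $-\nu^{-1}h_{kl}\bar\eta^l$ produced when $A^t=-\p/\p\eta^k$ differentiates $e^{-h}$ on the right-hand side. For the odd derivatives $\p/\p\theta^k$ and $\p/\p\bar\theta^l$, items (7) and (8) say that $T_h$ commutes with them, and the resulting $\p_\theta$ inside the Berezin integral vanishes since $\int\p_{\theta^k}(\,\cdot\,)\,d\theta = 0$, again reproducing $A^t=-A$.

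The delicate cases are the holomorphic derivatives $A=\p/\p z^k$ (and analogously $\p/\p\bar z^l$), where
\[
A^t = -\frac{\p}{\p z^k} - \frac{\p\log\mathbf g}{\p z^k}.
\]
Item (5) of Theorem \ref{T:thid} produces two correction terms, $T_h(-(\p h/\p z^k)G)$ and $T_h((\p\log\mathbf g/\p z^k)G)$. The first accounts for the term $e^{-h}\p h/\p z^k\cdot G$ created when $A^t$ differentiates $e^{-h}$ on the right-hand side, and the second accounts for the $\p\log\mathbf g/\p z^k$ coefficient in $A^t$; the leftover $\int_{\Pi TM}(\p/\p z^k)T_h(fG)\,d\beta$ vanishes by ordinary integration by parts on $M$ under the compact-support hypothesis, so both sides agree. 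I expect this bookkeeping of the $\log\mathbf g$ and $h$ corrections --- which is really the reason for the factor $\det\Lambda_h$ in the definition of $T_h$ --- to be the main obstacle, but Theorem \ref{T:thid} was designed precisely so that every term cancels.
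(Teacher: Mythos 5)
Your proposal is correct and follows essentially the same route the paper takes: reduce to a single coordinate chart, verify the identity on the generating partial derivatives by moving the derivative across the fiberwise oscillatory integral $T_h$ via the corresponding items of Theorem \ref{T:thid}, and then propagate to general $A$ through the antihomomorphism property of transposition. You merely spell out the composition bookkeeping, the multiplication-operator case, and the $\partial/\partial z^k$ case with its $\log\mathbf g$ and $\partial h/\partial z^k$ corrections, all of which the paper compresses into a ``similarly.''
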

\begin{proof}
We will prove the proposition on a coordinate chart $U \subset M$. To prove (\ref{E:transp}) for $A = \p/\p \eta^p$ we verify the identity
\[
      \int \frac{\p f}{\p \eta^p}\cdot  e^{- h} G \, \mu = -\int  f \frac{\p}{\p \eta^p} \left( e^{- h} G\right)\mu.
\]
We have
\begin{eqnarray*}
   \int \frac{\p f}{\p \eta^p}\cdot  e^{- h} G \, \mu + \int  f \frac{\p}{\p \eta^p} \left( e^{- h} G\right)\mu =\\
\int e^{- h} \left(\left(\frac{\p}{\p \eta^p}- \nu^{-1}h_{pl}\bar\eta^l\right)(fG)\right)\mu =\\
  \int_{\Pi TU} T_h\left(\left(\frac{\p}{\p \eta^p}- \nu^{-1}h_{pl}\bar\eta^l\right)(fG)\right) d\beta = 0
\end{eqnarray*}
by item (2) of Theorem \ref{T:thid}. Identity  (\ref{E:transp}) for $A = \p/\p \bar\eta^q$ follows from~ item~(3)  of Theorem \ref{T:thid}. To prove (\ref{E:transp}) for $A = \p/\p \theta^p$ we verify the identity
\[
     \int \frac{\p f}{\p\theta^p}\cdot  e^{- h} G \, \mu = -\int (-1)^{|f|} f \frac{\p}{\p\theta^p}\left( e^{- h} G\right)\mu.
\]
Using item (7) of Theorem \ref{T:thid}, we have that
\begin{eqnarray*}
  \int \frac{\p f}{\p\theta^p}\cdot  e^{- h} G \, \mu  + \int (-1)^{|f|} f \frac{\p}{\p\theta^p}\left( e^{- h} G\right)\mu=\hskip 1.5cm \\
  \int e^{- h} \frac{\p}{\p\theta^p}(fG) \, \mu = \int_{\Pi TU} T_h \left( \frac{\p}{\p\theta^p}(fG)  \right) d\beta =\\
    \int_{\Pi TU} \frac{\p}{\p\theta^p} T_h (fG)\, d\beta = 0.
\end{eqnarray*}
One can similarly verify (\ref{E:transp}) for $A = \p/\p \bar\theta^q, \p/\p z^p,$ and $\p/\p\bar z^q$ using  items (8), (5), and (6) of Theorem~\ref{T:thid}, respectively. Now the statement of the Proposition follows from (\ref{E:abt}).
\end{proof}

\begin{theorem}\label{T:traceono}
  Given $f \in \Q$ and  $e^{- h} G \in \O$, where $G$ is compactly supported over $M$, the following identity holds,
\[
     \int (L_f - R_f)\left(e^{- h} G\right) \, \mu = 0.
\]
\end{theorem}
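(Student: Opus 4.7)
The identity is the supertrace property of $\mu$ applied to oscillatory symbols. My strategy is a two-step reduction: first localize via a partition of unity so that $G$ is supported in a single contractible coordinate chart $U$ (the integrand remains supported in $U$ because $L_\phi, R_\phi$ are local natural differential operators), then pass to a set of generators of $(\Q(U),\ast)$, namely $C^\infty(U)((\nu))$ together with the fiber coordinates $\eta^k, \bar\eta^l, \theta^k, \bar\theta^l$. The identity propagates across $\ast$-products: using the supercommutation $L_{f_2}R_{f_1} = (-1)^{|f_1||f_2|} R_{f_1}L_{f_2}$ and the fact that $L_{f_i}, R_{f_i}$ preserve both $\O$ and compact support over $M$, one has
\[
\int L_{f_1 \ast f_2}F\mu = \int L_{f_1}(L_{f_2}F)\mu = \int R_{f_1}(L_{f_2}F)\mu = (-1)^{|f_1||f_2|}\int L_{f_2}(R_{f_1}F)\mu = \int R_{f_1\ast f_2}F\mu,
\]
where one invokes the identity for $f_1$ applied to $L_{f_2}F$ and for $f_2$ applied to $R_{f_1}F$.

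For $\phi \in C^\infty(M)((\nu))$ pulled back from the base, Proposition~\ref{P:superpot} gives
\[
L_\phi = \sum_{r\geq 0}\frac{\nu^r}{r!}(D^{k_1}\cdots D^{k_r}\phi)\frac{\p^r}{\p\eta^{k_1}\cdots\p\eta^{k_r}}, \qquad R_\phi = \sum_{r\geq 0}\frac{\nu^r}{r!}(\bar D^{l_1}\cdots\bar D^{l_r}\phi)\frac{\p^r}{\p\bar\eta^{l_1}\cdots\p\bar\eta^{l_r}},
\]
so $L_\phi - \phi$ is a sum of $\p/\p\eta$-derivatives and $R_\phi - \phi$ is a sum of $\p/\p\bar\eta$-derivatives, each with base-function coefficients. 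Items~(2) and~(3) of Theorem~\ref{T:thid} give $\int(\p/\p\eta^k)(e^{-h}G)\mu = 0 = \int(\p/\p\bar\eta^l)(e^{-h}G)\mu$. Since $\p/\p\eta^k$ and $\p/\p\bar\eta^l$ map $\O_h$ into itself preserving compact support over $M$, iteration handles all $r \geq 1$ terms. Thus $\int L_\phi F\mu = \int \phi F \mu = \int R_\phi F \mu$.

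For the fiber-coordinate generators, $L_{\eta^k} = \eta^k$, $L_{\theta^k} = \theta^k$, $R_{\bar\eta^l} = \bar\eta^l$, $R_{\bar\theta^l} = \bar\theta^l$ act as Grassmann multiplications. The counterpart operators $R_{\eta^k}, R_{\theta^k}, L_{\bar\eta^l}, L_{\bar\theta^l}$ are nontrivial natural differential operators whose explicit forms follow, as in the proof of Proposition~\ref{P:bimod} and formula~(\ref{E:lgbareta}), from the commutation relations~(\ref{E:superlr}) for the superpotential $X$. In each case the difference between the two sides of the desired identity is built from first-order operators in $\p/\p\bar\eta^l, \p/\p\bar\theta^l, \p/\p\bar z^l$ (or their unbarred versions) with base-function coefficients. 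The fiber-derivative contributions integrate to zero by Theorem~\ref{T:thid} items~(2),(3),(7),(8).

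The main obstacle is the $\p/\p\bar z^l$ (or $\p/\p z^k$) contributions: unlike fiber derivatives, these do not integrate to zero but instead produce $\log\mathbf g$ terms via Proposition~\ref{P:transp} (the transposes being $-\p/\p\bar z^l - \p\log\mathbf g/\p\bar z^l$). Verifying that these logarithmic contributions cancel against the remaining pieces in the explicit formulas for $L_{\bar\eta^l}, R_{\eta^k}$, etc., amounts to the same Christoffel-symbol bookkeeping that underlies the proof of Proposition~\ref{E:superdens}, where $\mathbf g\,dzd\bar zd\eta d\bar\eta d\theta d\bar\theta$ is identified as the non-normalized supertrace density. A cleaner alternative would be to invoke the Remark after Theorem~\ref{T:traceid} (which gives the identity for $F$ compactly supported over $TU$) and to rewrite $\int(L_\phi - R_\phi)(e^{-h}G)\mu = \int_{\Pi TM}T_h\bigl(e^h(L_\phi - R_\phi)(e^{-h}G)\bigr)\,d\beta$, then show the integrand is a total $(\Pi TM)$-divergence using items~(5)--(8) of Theorem~\ref{T:thid}.
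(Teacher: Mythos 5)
Your main route (partition of unity, then generators of $(\Q(U),\ast)$ with explicit Christoffel-symbol verification) is genuinely different from what the paper does, and it leaves the crucial step unverified: you acknowledge that for the fiber-coordinate generators the $\p/\p \bar z^l$ and $\p/\p z^k$ pieces produce $\log\mathbf g$ contributions and then merely assert that they cancel, pointing to Proposition~\ref{E:superdens} by analogy. That cancellation is exactly the load-bearing computation in your argument and cannot be waved away; it is precisely the sort of bookkeeping the paper is organized to avoid.

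The paper's proof bypasses all of this. Two ingredients are already in place: (a) immediately after Proposition~\ref{E:superdens} it is established that $\mathbf g\,dz\,d\bar z\,d\eta\,d\bar\eta\,d\theta\,d\bar\theta$, hence $\mu$, is a supertrace density for $\ast$ on $TU\oplus\Pi TU$, which (by pairing against compactly supported test functions) is \emph{equivalent} to the pointwise differential-operator identity $(L_f-R_f)^t\unit=0$; and (b) Proposition~\ref{P:transp} shows that this transposition rule is valid against arbitrary oscillatory symbols. Combining them,
\[
\int (L_f-R_f)\bigl(e^{-h}G\bigr)\,\mu
= \int \unit\cdot (L_f-R_f)\bigl(e^{-h}G\bigr)\,\mu
= \int \bigl((L_f-R_f)^t\unit\bigr)\cdot e^{-h}G\,\mu = 0,
\]
with no localization, no choice of generators, and no cancellation check. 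Your ``cleaner alternative'' is actually in the right neighborhood: you correctly observe that Theorem~\ref{T:traceid} together with its Remark gives the trace property $\int(L_f-R_f)F\,\mu=0$ for $F$ compactly supported over $TU$, which \emph{is} the content $(L_f-R_f)^t\unit=0$. But instead of then invoking Proposition~\ref{P:transp} to carry that pointwise identity over to $e^{-h}G$, you propose to re-derive the result by exhibiting the integrand as a total $\Pi TM$-divergence, which would again require the unverified computations of your first route. The missing step is simply to recognize that $(L_f-R_f)^t\unit=0$ is a local differential identity independent of what it is paired against, and that Proposition~\ref{P:transp} is designed precisely so that transposition against $\mu$ works verbatim on $\O$. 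With that, the theorem follows in one line.
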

\begin{proof}
 The condition that $\mu$ is a trace density for the star product $\ast$ on $TM \oplus \Pi TM$ is equivalent to the condition that 
\begin{equation}\label{E:tracetrans}
    \left(L_f - R_f\right)^t \unit = 0
\end{equation}
for any $f \in C^\infty(TM \oplus \Pi TM)((\nu))$. Therefore, (\ref{E:tracetrans}) holds for any $f \in \Q$. By Proposition \ref{P:transp},
\begin{eqnarray*}
     \int (L_f - R_f)\left(e^{- h} G\right) \, \mu =
 \int \unit \cdot (L_f - R_f)\left(e^{- h} G\right) \, \mu =\\
   \int \left((L_f - R_f)^t \unit \right)\left(e^{- h} G\right) \, \mu =0.
\end{eqnarray*}
\end{proof}

We introduce a functional $\tau$ on the compactly supported formal functions on $M$ by the formula
\[
     \tau(f) := \int K_f \,\mu.
\]
It follows from Proposition \ref{P:kfftino}  that it is well defined. Using partition of unity, Proposition \ref{P:kcomm}, and Theorem \ref{T:traceono} one can show that $\tau$ is a trace functional on the algebra $(C^\infty(M)((\nu)),\star)$.

\begin{theorem}\label{T:fint}
Assume that the manifold $M$ is compact and $F(t)$ is the solution of (\ref{E:evol}) with the initial condition $F(0)=\unit$. Then the identity
\begin{equation}\label{E:fint}
     \tau(\unit) = \int F(t) \, \mu
\end{equation}
holds for all $t \geq 0$.
\end{theorem}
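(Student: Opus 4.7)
The plan is to show that $t \mapsto \int F(t)\,\mu$ is constant on $t > 0$ and to identify the common value with $\tau(\unit)$ by passing to the limit as $t \to \infty$. First I would differentiate under the integral and use the evolution equation~(\ref{E:evol}) to get
\[
\frac{d}{dt}\int F(t)\,\mu = \int L_\sigma F(t)\,\mu.
\]
The key algebraic observation is that $\sigma = [\chi,\tilde\chi]_\ast$ is the supercommutator of two \emph{odd} elements, so $\sigma = \chi \ast \tilde\chi + \tilde\chi \ast \chi$ and therefore $L_\sigma = L_\chi L_{\tilde\chi} + L_{\tilde\chi} L_\chi$.

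Next I would apply Theorem~\ref{T:traceono} with $f=\chi$ to the oscillatory symbol $L_{\tilde\chi} F(t)$ (which lies in $\O$ because $\nu L_{\tilde\chi}$ is a natural operator and natural operators preserve $\O$), yielding
\[
\int L_\chi L_{\tilde\chi} F(t)\,\mu = \int R_\chi L_{\tilde\chi} F(t)\,\mu.
\]
Since $L_{\tilde\chi}$ and $R_\chi$ are both odd and $L_f$ supercommutes with $R_g$ for all $f,g$, one has $R_\chi L_{\tilde\chi} = -L_{\tilde\chi} R_\chi$. Combined with $R_\chi F(t) = L_\chi F(t)$ from Proposition~\ref{P:commute}, this gives $\int L_\chi L_{\tilde\chi} F(t)\,\mu = -\int L_{\tilde\chi} L_\chi F(t)\,\mu$, hence $\int L_\sigma F(t)\,\mu = 0$. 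Therefore $\int F(t)\,\mu$ is independent of $t$.

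To identify this constant value I would let $t \to \infty$. Theorem~\ref{T:lim} gives $F(t) \to \varepsilon$, and the explicit structure from Theorem~\ref{T:evo} -- where every $\deg$-homogeneous, $\nu$-degree component of $F(t)$ is a finite sum of terms $e^{-kt} t^l F^j_{r,k,l}$ with the property that $k = 0$ forces $l = 0$ -- guarantees that the limit can be taken termwise in each formal oscillatory integral. Using compactness of $M$, one concludes $\lim_{t \to \infty} \int F(t)\,\mu = \int \varepsilon\,\mu = \int K_\unit\,\mu = \tau(\unit)$. The case $t = 0$ follows by continuity of the family $F(t)$.

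The main technical point to verify is compatibility of the operator manipulations with the oscillatory symbol framework: that $L_{\tilde\chi} F(t)$ truly lies in $\O$ so Theorem~\ref{T:traceono} applies, and that the termwise-in-$\nu$ limit as $t \to \infty$ commutes with the formal oscillatory integral. Both are controlled by the naturality of $\nu L_{\tilde\chi}$ (hence its preservation of each space $\O_h$) together with the polynomial-times-exponential description of the homogeneous components of $F(t)$, so these checks are essentially bookkeeping rather than substantial obstacles.
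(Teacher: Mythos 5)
Your proposal is correct and follows essentially the same route as the paper: differentiate under the formal integral, write $L_\sigma = L_\chi L_{\tilde\chi} + L_{\tilde\chi}L_\chi$, use Proposition~\ref{P:commute} together with Theorem~\ref{T:traceono} and the supercommutation of $L$- and $R$-operators to kill the derivative, then invoke Theorem~\ref{T:lim} to identify the constant with $\tau(\unit)$. The only difference is a trivial reordering (the paper first replaces $L_\chi F = R_\chi F$ via Proposition~\ref{P:commute} and then applies Theorem~\ref{T:traceono} to $(L_\chi - R_\chi)L_{\tilde\chi}F$, whereas you apply Theorem~\ref{T:traceono} first), and you make explicit the check that $L_{\tilde\chi}F(t)\in\O$, which the paper leaves implicit.
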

\begin{proof}
According to Proposition \ref{P:kfftino}, the formal integral in (\ref{E:fint}) is well defined. Using evolution equation (\ref{E:evol}), Proposition~\ref{P:commute}, and Theorem~\ref{T:traceono} we get
\begin{eqnarray*}
  \frac{d}{dt} \int F(t) \, \mu = \int L_\sigma F(t) \, \mu = 
  \int (L_{\chi}L_{\tilde \chi} + L_{\tilde \chi} L_{\chi}) F(t) \, \mu =\\
  \int (L_{\chi}L_{\tilde \chi} + L_{\tilde \chi} R_{\chi}) F(t) \, \mu =
  \int (L_{\chi} - R_{\chi}) L_{\tilde \chi} F(t) \, \mu = 0.  
\end{eqnarray*}
Therefore, the integral in (\ref{E:fint}) does not depend on $t$. Now the statement of the theorem follows from Theorem \ref{T:lim}.
\end{proof}

\section{Identification of the trace functional $\tau$}

 In this section we will prove that for any contractible open subset $U \subset M$ there exists a constant $c \in \C$ such that
\[
             \tau(f) = c \int_U f \, \mu_\star
\]
for all $f \in C_0^\infty(U)((\nu))$. The local $\nu$-derivation
\[
       \delta = \frac{d}{d\nu} + \frac{d X}{d\nu} - R_{\frac{d X}{d\nu}}
\]
for the star product $\ast$ on $TU \oplus \Pi TU$ induces a derivation on the algebra $(\Q(U),\ast)$ which we also denote by $\delta$. Moreover, the action of $\delta$ on $\Q(U)$ extends to $\F(U)$ so that the Leibniz rule holds. Namely, for $f \in \Q(U)$ and $g \in \F(U)$,
\[
     \delta(f \ast g) = \delta(f) \ast g + f \ast \delta(g) \mbox{ and }  \delta(g \ast f) = \delta(g) \ast f + g \ast \delta(f).
\]
We will modify the derivation $\delta$ by an inner derivation so that the resulting derivation will leave invariant the subspace $\K(U) \subset \F(U)$. Set
\[
       w = -\tilde\beta\left(\frac{d\Phi}{d\nu}\right) + \eta^k \tilde\beta\left(\frac{1}{\nu^2}\frac{\p\Phi_{-1}}{\p z^k}\right)
\]
and define
\[
           \tilde\delta:=\delta + L_w - R_w.
\]
\begin{theorem}\label{T:tildedelta}
Given $f \in C^\infty(U)((\nu))$, we have that
\[
      \tilde\delta (K_f) = K_{\tilde\delta_\star f},
\] 
where
\[
     \tilde\delta_\star =  \frac{d}{d\nu} +  \frac{d\Phi}{d\nu} - L^\star_{ \frac{d\Phi}{d\nu}}
\]
is a derivation of $(C^\infty(U)((\nu)),\star)$.
\end{theorem}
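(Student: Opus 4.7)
The plan is to apply the uniqueness characterization of $K_g$: by Proposition~\ref{P:lift} and the ensuing definition of $K_g$ in Section~\ref{S:subk}, $K_g$ is the unique element $F\in\F(U)$ satisfying $(L_\sigma+R_\sigma)F=0$ together with $F-g\in\J_l(U)+\J_r(U)$. Taking $g=\tilde\delta_\star f$, I will verify these two properties for $F=\tilde\delta(K_f)$.

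For the first property, $\tilde\delta=\delta+L_w-R_w$ is an even derivation of $(\F(U),\ast)$, so the Leibniz rule applied to the identities $\sigma\ast K_f=0$ and $K_f\ast\sigma=0$ (valid because $K_f\in\K(U)$) yields
\[
\sigma\ast\tilde\delta(K_f)=-\tilde\delta(\sigma)\ast K_f,\qquad \tilde\delta(K_f)\ast\sigma=-K_f\ast\tilde\delta(\sigma).
\]
Hence the first property reduces to the single identity $\tilde\delta(\sigma)=0$, i.e.\ $d\sigma/d\nu=[\sigma,dX/d\nu+w]_\ast$. I would verify this by direct computation, using $\sigma=\varphi+i\hat\omega$, the explicit form of $dX/d\nu$ read off from (\ref{E:superpot}), Proposition~\ref{P:superpot}, Corollary~\ref{C:fastg}, and formula (\ref{E:oplsigma}) together with its analogue for $R_\sigma$. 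The two summands of $w$ are engineered precisely for this cancellation: the term $-\tilde\beta(d\Phi/d\nu)$ absorbs the obstruction coming from $[d\Phi/d\nu,\sigma]_\ast$ (exploiting the identity (\ref{E:tildeb}), which links $\tilde\beta$ to $\star$-multiplication), while the correction $\eta^k\tilde\beta(\nu^{-2}\p\Phi_{-1}/\p z^k)$ cancels the residual $\eta$-linear obstruction arising from the $\nu^{-2}$-piece of $dX/d\nu$. This identity is the principal technical obstacle in the proof.

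For the second property, write $K_f=f+(K_f-f)$ with $K_f-f\in\J_l+\J_r$. A check on the generators $\bar\eta^l,\bar\theta^l$ of $\J_l$ and $\eta^k,\theta^k$ of $\J_r$, using (\ref{E:superlr}) and Proposition~\ref{P:superpot}, shows that $\tilde\delta$ preserves $\J_l+\J_r$, so $\tilde\delta(K_f)\equiv\tilde\delta(f)\pmod{\J_l+\J_r}$, and it suffices to prove $\tilde\delta(f)\equiv\tilde\delta_\star f\pmod{\J_l+\J_r}$ for $f$ regarded as a lift to $TU\oplus\Pi TU$. By Corollary~\ref{C:fastg} the pointwise and $\ast$-products of $f$ with $d\Phi/d\nu$ coincide, so $[dX/d\nu,f]_\ast$ reduces to the commutator of $f$ with the purely fiberwise part of $X$; by Proposition~\ref{P:superpot} this lies in $\J_l+\J_r$ modulo a residual piece that is cancelled by $[\eta^k\tilde\beta(\nu^{-2}\p\Phi_{-1}/\p z^k),f]_\ast$. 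The remaining commutator $[-\tilde\beta(d\Phi/d\nu),f]_\ast$ produces, modulo $\J_l+\J_r$, exactly $(d\Phi/d\nu)f-L^\star_{d\Phi/d\nu}f$ via (\ref{E:tildeb}). Assembling these contributions yields $\tilde\delta(f)\equiv df/d\nu+(d\Phi/d\nu)f-L^\star_{d\Phi/d\nu}f=\tilde\delta_\star f$ modulo $\J_l+\J_r$, completing the verification.
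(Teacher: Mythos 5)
Your overall architecture matches the paper's: first show that $\tilde\delta$ maps $\K(U)$ to itself, then identify the symbol of $\tilde\delta(K_f)$ modulo $\J_l+\J_r$. But both intermediate claims you use to execute this plan diverge from the paper in ways that create real gaps.

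For the first step, you reduce to the identity $\tilde\delta(\sigma)=0$. This is strictly stronger than what is needed and, as you candidly admit, you have not verified it; I see no reason to expect it to hold. The Leibniz argument you sketch only requires the \emph{weaker} statements that $\tilde\delta(\sigma)\ast K_f=0$ and $K_f\ast\tilde\delta(\sigma)=0$, which is the same thing as $\tilde\delta(\sigma)$ lying in the two-sided annihilator of $\K$ — a much bigger space than $\{0\}$. The paper's proof never addresses $\tilde\delta(\sigma)$ at all; it instead works with the generator-level characterization (c) of $\K(U)$ and verifies $\tilde\delta(\bar\eta^l)\ast K_f=0$, $\tilde\delta(\bar\theta^l)\ast K_f=0$, $K_f\ast\tilde\delta(\eta^k)=0$, $K_f\ast\tilde\delta(\theta^k)=0$. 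Each of these computations produces a nonzero result for $\tilde\delta(\bar\eta^l)$ etc.\ (so $\tilde\delta$ is certainly not zero on the generators); the point is only that, modulo $\J_l$ or $\J_r$, the result is a $\ast$-multiple of $(\frac{d\Phi}{d\nu}-\nu^{-2}\eta^k\p\Phi_{-1}/\p z^k+w)$ acting on $K_f$, which vanishes by (\ref{E:tildeb}). The element $w$ is engineered precisely for \emph{these} cancellations against $K_f$, not for $\tilde\delta(\sigma)=0$. You should drop the $\tilde\delta(\sigma)=0$ claim and work on the generators.

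The second step has a similar problem that is more clearly fatal. You assert that $\tilde\delta$ preserves $\J_l+\J_r$, so that $\tilde\delta(K_f)\equiv\tilde\delta(f)\pmod{\J_l+\J_r}$, and then compute $\tilde\delta(f)$ using (\ref{E:tildeb}). But (\ref{E:tildeb}) is an identity of the form $K_g\ast\tilde\beta(f)=K_{g\star f}$ and $\tilde\beta(f)\ast K_g=f\ast K_g$; it holds only against elements of the form $K_g$, not against the lift of a function $g$. The element $\tilde\beta(\frac{d\Phi}{d\nu})-\frac{d\Phi}{d\nu}$ lies in $\J_l$ and is not killed when $\ast$-multiplied against the lift $f$. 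It is exactly the $K_f$, via Lemma \ref{L:techn} and Corollary \ref{C:fmod}, that allows the $\J_l$-tail of $\tilde\beta(\frac{d\Phi}{d\nu})$ to be discarded on one side and converted into the $\star$-product $f\star\frac{d\Phi}{d\nu}$ on the other. The paper therefore never passes from $K_f$ to $f$; it computes $\tilde\delta(K_f)$ directly, term by term, getting (\ref{E:mod1})--(\ref{E:mod3}). Your reduction to $\tilde\delta(f)$ loses precisely the mechanism by which the operator $L^\star_{d\Phi/d\nu}$ (a $\star$-product on $M$) appears in the answer, and a $\nu$-derivation of the trivial form $\frac{d}{d\nu}+\text{pointwise data}$ would emerge instead, which is wrong.

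In short: keep the two-part skeleton, but in part one replace $\tilde\delta(\sigma)=0$ by the generator-level identities acting on $K_f$, and in part two abandon the reduction to $\tilde\delta(f)$ and compute $\tilde\delta(K_f)$ directly against $K_f$ using Lemma \ref{L:techn}, Corollary \ref{C:fmod}, and (\ref{E:tildeb}).
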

\begin{proof}
First we will prove that the space $\K(U)$ is invariant under the action of $\tilde\delta$. We have
\begin{eqnarray*}
    \tilde\delta(\bar\eta^l) = \frac{d X}{d \nu} \ast \bar\eta^l - \bar\eta^l \ast \frac{d X}{d \nu} + w \ast \bar\eta^l - \bar\eta^l \ast w = \\
- \bar\eta^l \ast \left(\frac{d\Phi}{d\nu} - \frac{1}{\nu^2} \eta^k \frac{\p \Phi_{-1}}{\p z^k} + w\right) \mod \J_l.
\end{eqnarray*}
Given $f \in C^\infty(U)((\nu))$, we get using (\ref{E:tildeb}) that
\begin{eqnarray*}
    \left(\frac{d\Phi}{d\nu} - \frac{1}{\nu^2} \eta^k \frac{\p \Phi_{-1}}{\p z^k} + w\right) \ast K_f = \hskip 5cm\\
  \left(\frac{d\Phi}{d\nu} - \frac{1}{\nu^2} \eta^k \frac{\p \Phi_{-1}}{\p z^k} -\tilde\beta\left(\frac{d\Phi}{d\nu}\right) + \eta^k \tilde\beta\left(\frac{1}{\nu^2}\frac{\p\Phi_{-1}}{\p z^k}\right) \right) \ast K_f = 0.
\end{eqnarray*}
Thus, $\tilde\delta(\bar\eta^l) \ast K_f =0$. One can  prove similarly that $\tilde\delta(\bar\theta^l) \ast K_f = 0$. Next,
\begin{eqnarray*}
    \tilde\delta(\eta^k) = \eta^k \ast \frac{d X}{d \nu}  - \eta^k \ast \frac{d X}{d \nu} + \hskip 1.5cm\\
 w \ast \eta^k - \eta^k \ast w = w \ast \eta^k \mod \J_r.
\end{eqnarray*}
Given $f \in C^\infty(U)((\nu))$, we have using (\ref{E:tildeb}) that
\begin{eqnarray*}
  K_f \ast \tilde\delta(\eta^k)  = K_f \ast w \ast \eta^k = \hskip 2.5cm\\
 - K_f \ast \tilde\beta\left(\frac{d\Phi}{d\nu}\right) \ast \eta^k = -K_{f \star \frac{d\Phi}{d\nu}} \ast \eta^k  =0.
\end{eqnarray*}
Similarly, $K_f \ast \tilde\delta(\theta^k) = 0$. It follows that
\[
    \bar\eta^l \ast \tilde\delta (K_f) =  \bar\eta^l \ast \tilde\delta (K_f) + \tilde\delta(\bar\eta^l) \ast  K_f = \tilde\delta(\bar\eta^l \ast  K_f) =0.
\]
We prove along the same lines that
\[
     \bar\theta^l \ast \tilde\delta (K_f) = \tilde\delta (K_f) \ast \eta^k = \tilde\delta (K_f) \ast \theta^k =0.
\]
Therefore, $\tilde\delta (K_f) \in \K(U)$. Now we need to find an element $g \in C^\infty(U)((\nu))$ such that $\tilde\delta (K_f) - g \in \J_l + \J_r$. For this element we will have $\tilde\delta (K_f) = K_g$. We write explicitly
\[
\tilde\delta (K_f) = \frac{d}{d\nu}K_f + \frac{dX}{d\nu}K_f - K_f \ast \frac{dX}{d\nu} + w \ast K_f - K_f \ast w.
\]
Since $K_f = f \mod (\J_l + \J_r)$ and $\frac{dX}{d\nu} = \frac{d\Phi}{d\nu} \mod (\J_l + \J_r)$, it follows that
\begin{equation}\label{E:mod1}
     \frac{d}{d\nu}K_f + \frac{dX}{d\nu}K_f = \frac{df}{d\nu} + \frac{d\Phi}{d\nu}f \mod  (\J_l + \J_r).
\end{equation}
By Lemma \ref{L:techn} and Corollary \ref{C:fmod},
\begin{equation}\label{E:mod2}
    K_f \ast \frac{dX}{d\nu} = K_f \ast \frac{d\Phi}{d\nu} = f \star \frac{d\Phi}{d\nu} \mod  (\J_l + \J_r).
\end{equation}
Using  (\ref{E:tildeb}) we get that
\begin{eqnarray*}
    w \ast K_f - K_f \ast w = \left(-\tilde\beta\left(\frac{d\Phi}{d\nu}\right) + \eta^k \tilde\beta\left(\frac{1}{\nu^2}\frac{\p\Phi_{-1}}{\p z^k}\right)\right) \ast K_f -\\
 K_f \ast \left(-\tilde\beta\left(\frac{d\Phi}{d\nu}\right) + \eta^k \tilde\beta\left(\frac{1}{\nu^2}\frac{\p\Phi_{-1}}{\p z^k}\right)\right) =\hskip 2cm\\
- \frac{d\Phi}{d\nu} \ast K_f + \frac{1}{\nu^2} \eta^k \ast \frac{\p\Phi_{-1}}{\p z^k} \ast K_f + K_{f \star \frac{d\Phi}{d\nu}}.
\end{eqnarray*}
Therefore, by Corollary \ref{C:fmod},
\begin{equation}\label{E:mod3}
    w \ast K_f - K_f \ast w =  - \frac{d\Phi}{d\nu} \star f + f \star \frac{d\Phi}{d\nu} \mod  (\J_l + \J_r).
\end{equation}
Combining (\ref{E:mod1}), (\ref{E:mod2}), and (\ref{E:mod3}), we get
\[
    \tilde\delta (K_f) = \frac{df}{d\nu} + \frac{d\Phi}{d\nu}f - \frac{d\Phi}{d\nu} \star f = \tilde\delta_\star (f) \mod  (\J_l + \J_r),
\]
which concludes the proof.
\end{proof}

\begin{theorem}\label{T:osctrace}
  Given $e^{- h} G \in \O(U)$ such that $G$ is compactly supported over $U$, the following identity holds,
\[
      \frac{d}{d\nu} \int e^{- h} G \, \mu = \int \tilde\delta\left(e^{-h} G\right) \, \mu.
\]
\end{theorem}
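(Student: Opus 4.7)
The plan is to mimic the proof of Theorem~\ref{T:traceid}, adapting two ingredients to the oscillatory setting: Lemma~\ref{L:prodber} and the identity $\frac{d}{d\nu}\int F\,\mu = \int\frac{dF}{d\nu}\,\mu - \frac{m}{\nu}\int F\,\mu$, which there follows from the $\nu^{-m}$ factor in $\mu$. The supertrace Theorem~\ref{T:traceono} and the formal oscillatory integral formalism of Theorem~\ref{T:thid} will supply exactly what is needed; the inner-derivation modification $L_w-R_w$ will be absorbed by the Remark after Theorem~\ref{T:traceid}.

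First, I would extend Lemma~\ref{L:prodber} to an oscillatory $F=e^{-h}G\in\O(U)$ with $G$ compactly supported over $U$ and any $P\in\Q$:
\[
\int F\ast P\,\mu \;=\; \int F\,\I^{-1}(P)\,\mu.
\]
The original proof reduces $P$ to a finite sum of products $b\ast a$ with $a$ holomorphic and $b$ antiholomorphic on $TU\oplus\Pi TU$, and then uses only (i) the equalities $a\ast F=aF$ and $F\ast b=Fb$, and (ii) the supertrace property of $\mu$. Item (i) is unchanged, and item (ii) for oscillatory $F$ is exactly Theorem~\ref{T:traceono}; the graded-sign bookkeeping is identical to that of Lemma~\ref{L:prodber}.

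Second, I would derive the oscillatory analog of the $\mu\propto\nu^{-m}$ identity. Since $\frac{dh}{d\nu}=-\frac{h}{\nu}$, for $F=e^{-h}G$ one has $\frac{dF}{d\nu}=\frac{h}{\nu}F+e^{-h}\frac{dG}{d\nu}$. Applying Theorem~\ref{T:thid}(4) to $\int F\,\mu=\int_{\Pi TM}T_h(G)\,d\beta$ gives
\[
\frac{d}{d\nu}\int F\,\mu \;=\; \int e^{-h}\Bigl(\tfrac{d}{d\nu}+\tfrac{h}{\nu}-\tfrac{m}{\nu}\Bigr)G\,\mu,
\]
and substituting $e^{-h}\frac{dG}{d\nu}=\frac{dF}{d\nu}-\frac{h}{\nu}F$ collapses the right-hand side to $\int\frac{dF}{d\nu}\,\mu-\frac{m}{\nu}\int F\,\mu$.

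With these two upgrades in hand, the argument of Theorem~\ref{T:traceid} together with its Remark reruns verbatim. Writing
\[
\tilde\delta(F) \;=\; \tfrac{dF}{d\nu}+\tfrac{dX}{d\nu}\cdot F - F\ast\tfrac{dX}{d\nu}+w\ast F-F\ast w,
\]
the inner-derivation term $\int(w\ast F-F\ast w)\,\mu$ vanishes by Theorem~\ref{T:traceono} applied to the even element $w\in\Q(U)$; the extended Lemma~\ref{L:prodber} replaces $F\ast\frac{dX}{d\nu}$ by $F\cdot\I^{-1}(\frac{dX}{d\nu})$; and Lemma~\ref{L:berdx} identifies this with $F\cdot(\frac{dX}{d\nu}+\frac{m}{\nu})$. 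One is thus left with $\int\tilde\delta(F)\,\mu=\int(\frac{dF}{d\nu}-\frac{m}{\nu}F)\,\mu$, which by the identity from the previous paragraph equals $\frac{d}{d\nu}\int F\,\mu$. The only real obstacle I anticipate is the parity check when transferring Lemma~\ref{L:prodber} to oscillatory $F$; once Theorem~\ref{T:traceono} is applied with the correct signs for the holomorphic $a$ and antiholomorphic $b$, everything lines up as in the original proof.
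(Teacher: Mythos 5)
Your proof is correct, and it goes by a route that is recognizably different from the paper's, though both ultimately rest on the same Section~10 machinery. The paper's own proof is a one-liner: writing $\tilde\delta=\frac{d}{d\nu}+A$, it observes that Theorem~\ref{T:traceid} (together with the Remark following it, which permits the inner-derivation modification $L_w-R_w$) is equivalent to the \emph{local density identity} $(A^t\unit)\,\mu=\frac{d\mu}{d\nu}$, and then applies Proposition~\ref{P:transp} — the statement that transposes of differential operators pass through formal oscillatory integrals — to transfer this identity to the oscillatory setting. You, by contrast, re-run the algebra of the proof of Theorem~\ref{T:traceid} from scratch on an oscillatory $F=e^{-h}G$: you upgrade Lemma~\ref{L:prodber} (using Theorem~\ref{T:traceono} for the supertrace step and $a\ast F=aF$, $F\ast b=Fb$, both of which survive unchanged on $\F$), use Lemma~\ref{L:berdx} to evaluate $\I^{-1}(\frac{dX}{d\nu})$, kill the $w\ast F-F\ast w$ term directly with Theorem~\ref{T:traceono}, and derive the needed Leibniz rule $\frac{d}{d\nu}\int F\mu=\int\frac{dF}{d\nu}\mu-\frac{m}{\nu}\int F\mu$ explicitly from Theorem~\ref{T:thid}(4) plus $\frac{dh}{d\nu}=-\frac{h}{\nu}$.

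Each approach has something going for it. The paper's transpose-based argument is shorter because Proposition~\ref{P:transp} was engineered to carry precisely this kind of identity across from $\Q$ to $\O$; the cost is that the reduction to $(A^t\unit)\mu=\frac{d\mu}{d\nu}$ leaves the Leibniz-rule step for the formal oscillatory integral implicit (it is Theorem~\ref{T:thid}(4) in disguise). Your version spells out both the extension of Lemma~\ref{L:prodber} and the Leibniz rule, which makes it longer but more self-contained at the level of the star-product identities; note that your upgraded Lemma~\ref{L:prodber} is essentially Proposition~\ref{P:transp} applied to the operator $L_P-R_P$, and Theorem~\ref{T:traceono} — which you invoke twice — is itself proved via Proposition~\ref{P:transp}, so the two routes converge at the same place. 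Your anticipated worry about sign bookkeeping is unfounded: the sign computation in Lemma~\ref{L:prodber} uses only $|F\ast b|=|F|+|b|$ and the defining convention (\ref{E:grr}) for $R_f$, neither of which changes when $F$ is oscillatory.
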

\begin{proof}
Writing $\tilde\delta = \frac{d}{d\nu} + A$, we see that Theorem \ref{T:traceid} is equivalent to the fact that
\[
     (A^t \unit) \mu = \frac{d\mu}{d\nu}
\]
 which implies Theorem \ref{T:osctrace} according to Proposition \ref{P:transp}.
\end{proof}
\begin{corollary}\label{C:constc}
There exists a constant $c \in \C$ such that
\[
     \tau(f) = c  \int_U f \, \mu_\star
\]
for any $f \in C_0^\infty(U)((\nu))$.
\end{corollary}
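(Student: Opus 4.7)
The plan is to combine Theorems \ref{T:osctrace} and \ref{T:tildedelta} to show that $\tau$ satisfies the defining $\nu$-derivation identity of the canonical trace density $\mu_\star$, and then exploit the local uniqueness of traces (on a contractible open set) together with equation (\ref{E:nuderiv}) to pin down the proportionality factor to a genuine complex number rather than a formal Laurent series in $\nu$.

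First I would establish that $\tau$ behaves correctly under $\tilde\delta_\star$. By Proposition \ref{P:kfftino} the element $K_f$ lies in $\O(U)$, so Theorem \ref{T:osctrace} applies and gives
\[
\frac{d}{d\nu}\tau(f) \;=\; \frac{d}{d\nu}\int K_f\,\mu \;=\; \int \tilde\delta(K_f)\,\mu \;=\; \int K_{\tilde\delta_\star f}\,\mu \;=\; \tau(\tilde\delta_\star f),
\]
where the third equality uses Theorem \ref{T:tildedelta}. Since it is already known (from the paragraph preceding the corollary) that $\tau$ is a trace functional on $(C_0^\infty(U)((\nu)),\star)$, and since on a contractible open set the trace is unique up to a factor in $\C((\nu))$, there exists $c(\nu)\in\C((\nu))$ with
\[
\tau(f) \;=\; c(\nu)\int_U f\,\mu_\star \qquad\text{for every } f\in C_0^\infty(U)((\nu)).
\]

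Second, I would check that $\mu_\star$ itself satisfies the $\nu$-derivation identity with $\tilde\delta_\star$ in place of $\delta_\star$. The difference $\tilde\delta_\star-\delta_\star = R^\star_{d\Phi/d\nu}-L^\star_{d\Phi/d\nu}=-[\tfrac{d\Phi}{d\nu},\cdot]_\star$ is an inner derivation, so by the trace property of $\int_U\cdot\,\mu_\star$,
\[
\frac{d}{d\nu}\int_U f\,\mu_\star \;=\; \int_U \delta_\star(f)\,\mu_\star \;=\; \int_U \tilde\delta_\star(f)\,\mu_\star.
\]
Now differentiating the relation $\tau(f)=c(\nu)\int_U f\,\mu_\star$ with respect to $\nu$ and comparing with the identity $\frac{d}{d\nu}\tau(f)=\tau(\tilde\delta_\star f)$ established above yields
\[
\tau(\tilde\delta_\star f) \;=\; c'(\nu)\int_U f\,\mu_\star + c(\nu)\int_U \tilde\delta_\star(f)\,\mu_\star \;=\; c'(\nu)\int_U f\,\mu_\star + \tau(\tilde\delta_\star f),
\]
whence $c'(\nu)\int_U f\,\mu_\star=0$ for all $f\in C_0^\infty(U)((\nu))$. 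Since $\mu_\star$ is a non-degenerate trace density, this forces $c'(\nu)=0$, so $c\in\C$, which is the desired conclusion.

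There is no real obstacle: the heavy lifting has already been done by Theorems \ref{T:osctrace} and \ref{T:tildedelta}, and by the established trace property of $\tau$. The only point requiring a moment's care is the bookkeeping between $\delta_\star$ (built from $R^\star_{d\Phi/d\nu}$, as in (\ref{E:separdelta})) and $\tilde\delta_\star$ (built from $L^\star_{d\Phi/d\nu}$, as in Theorem \ref{T:tildedelta}); these differ by an inner derivation, so they are interchangeable inside a trace and the characterization (\ref{E:nuderiv}) of $\mu_\star$ passes freely between the two.
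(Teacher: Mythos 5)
Your proof is correct and follows the same route as the paper: establish the $\nu$-derivation identity $\frac{d}{d\nu}\tau(f)=\tau(\tilde\delta_\star f)$ via Theorems \ref{T:osctrace} and \ref{T:tildedelta}, then invoke the characterization (\ref{E:nuderiv}) of the canonical trace density (noting that $\tilde\delta_\star$ and $\delta_\star$ differ by an inner derivation) together with the previously established fact that $\tau$ is a trace. The paper's own proof is identical in substance, merely compressing your final bookkeeping with $c(\nu)$ into the phrase ``whence the corollary follows.''
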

\begin{proof}
By Theorem \ref{T:tildedelta}, we have
\begin{eqnarray*}
   \frac{d}{d\nu}\tau(f) =  \frac{d}{d\nu}\int K_f \, \mu = \int \tilde\delta (K_f) \, \mu =\\
  \int K_{\tilde\delta_\star(f)} \, \mu = \tau\left(\tilde\delta_\star(f)\right),
\end{eqnarray*}
whence the corollary follows.
\end{proof}

Corollary~\ref{C:constc} implies the following theorem.

\begin{theorem}\label{T:constmult}
If the manifold $M$ is connected, then there exists a constant $c$ such that
\[
          \tau(f) = \int K_f \, \mu = c \int_M  f \, \mu_\star
\]
for all compactly supported formal functions $f$  on $M$.
\end{theorem}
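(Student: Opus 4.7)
The plan is to promote the local identity from Corollary \ref{C:constc} to a global one by a standard partition of unity plus connectedness argument.

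First, I would cover $M$ by a locally finite collection of contractible coordinate charts $\{U_\alpha\}$ and choose a subordinate smooth partition of unity $\{\phi_\alpha\}$. For every $\alpha$, Corollary \ref{C:constc} supplies a constant $c_\alpha \in \C$ such that
\[
      \tau(g) = c_\alpha \int_{U_\alpha} g\, \mu_\star
\]
for all $g \in C_0^\infty(U_\alpha)((\nu))$. Here one must observe that Corollary \ref{C:constc} is really a statement about the functional $\tau$ restricted to $C_0^\infty(U_\alpha)((\nu))$; it does not depend on which superpotential or derivation $\tilde\delta$ was used locally, because $\tau$ itself is defined intrinsically as $\tau(f) = \int K_f\,\mu$, independently of any choice of chart.

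Next I would show that the constants $c_\alpha$ agree on overlaps. Suppose $U_\alpha \cap U_\beta \neq \emptyset$, and pick any contractible open set $V \subset U_\alpha \cap U_\beta$ together with a function $g \in C_0^\infty(V)$ with $\int_V g\,\mu_\star \neq 0$ (such a $g$ exists because the leading term of $\mu_\star$ is the nowhere-vanishing form \eqref{E:lead}, so we can take $g$ positive and supported in a small coordinate ball). Applying Corollary \ref{C:constc} both on $U_\alpha$ and on $U_\beta$ to the same $g$, one obtains $c_\alpha \int_V g\,\mu_\star = \tau(g) = c_\beta \int_V g\,\mu_\star$, hence $c_\alpha = c_\beta$. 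Since $M$ is connected, the nerve of the cover $\{U_\alpha\}$ (restricted to nonempty intersections) is connected, so any two indices can be joined by a finite chain of pairwise overlapping charts; iterating the above equality along such a chain yields a single constant $c$ with $c_\alpha = c$ for all $\alpha$.

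Finally, given an arbitrary compactly supported $f \in C^\infty(M)((\nu))$, only finitely many of the $\phi_\alpha f$ are nonzero, and each lies in $C_0^\infty(U_\alpha)((\nu))$. Since $\tau$ is $\C((\nu))$-linear on compactly supported formal functions (which is immediate from the linearity of $f \mapsto K_f$ and the linearity of the formal integral), one obtains
\[
      \tau(f) = \sum_\alpha \tau(\phi_\alpha f) = \sum_\alpha c \int_{U_\alpha} \phi_\alpha f\,\mu_\star = c \int_M f\,\mu_\star,
\]
which is the desired identity. The only mildly delicate point is justifying the existence of a test function $g$ on which both sides of the local identity are nonzero, so that $c_\alpha = c_\beta$ can actually be extracted; this is where the explicit normalization \eqref{E:lead} of $\mu_\star$ is used. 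Everything else is routine.
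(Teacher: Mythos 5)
Your proposal is correct and matches the paper's (implicit) approach: the paper simply asserts that Theorem~\ref{T:constmult} follows from Corollary~\ref{C:constc} and leaves the passage from local to global to the reader, which is exactly the partition-of-unity plus overlap/connectedness argument you wrote out. One small imprecision: since the paper allows a compactly supported formal function $f = \nu^r f_r + \nu^{r+1}f_{r+1}+\ldots$ to have coefficients $f_i$ without a common compact support, it is not literally true that only finitely many $\phi_\alpha f$ are nonzero; rather, for each fixed $\nu$-order only finitely many terms contribute by local finiteness of the cover, so $\sum_\alpha \tau(\phi_\alpha f)$ converges in the $\nu$-adic topology. With that cosmetic fix the argument is complete and is the one intended.
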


\section{Getzler's rescaling}

In this section we use the rescaling of $TM \oplus \Pi TM$ introduced by Getzler in \cite{G}. Consider the operator $\lambda_s$ on $C^\infty(TM \oplus \Pi TM)$ given in local coordinates by the formula
\[
    \lambda_s: F(z, \bar z, \eta,\bar\eta,\theta,\bar\theta) \mapsto F(z, \bar z, s\eta,s\bar\eta,s\theta,s\bar\theta).
\]
Given a function $F$ on $TM \oplus \Pi TM$ compactly supported over $TM$, it is easy to verify that
\[
                    \int \lambda_s F \, \mu
\]
does not depend on $s$. An analogous statement holds also for the formal integral when $F$ is an oscillatory symbol.
\begin{proposition}\label{P:indep}
Given $e^{- h}G \in \O$ such that $G \in \hat P$ is compactly supported over $M$, the formal integral
\[
          \int \lambda_s (e^{- h}G) \, \mu          
\]
does not depend on $s$.
\end{proposition}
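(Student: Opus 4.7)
The plan is to compute $\int \lambda_s(e^{-h}G)\mu$ directly from its definition as a fiberwise formal oscillatory integral $\int_{\Pi TM} T_h(G)\, d\beta$ with $T_h(G) = \det(\Lambda_h)\,\zeta(e^{\Delta_h}G)$, and observe that the $s$-dependence cancels. Since $h = \nu^{-1}h_{kl}\eta^k\bar\eta^l$ is bilinear in the even fiber variables and independent of the odd ones, $\lambda_s h = s^2 h$, so $\lambda_s(e^{-h}G) = e^{-s^2 h}\lambda_s G$. The first step is to track how the ingredients of $T_h$ rescale under $h\mapsto s^2 h$: from $\Lambda_h = (g_{kl}h^{lp})$ and $\Delta_h = \nu h^{lk}\partial_{\eta^k}\partial_{\bar\eta^l}$ one reads off $\Lambda_{s^2 h} = s^{-2}\Lambda_h$ (hence $\det\Lambda_{s^2 h} = s^{-2m}\det\Lambda_h$) and $\Delta_{s^2 h} = s^{-2}\Delta_h$.

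Next I would decompose $G = \sum G^{p,q,a,b}$ into fiberwise multi-homogeneous components of polynomial degrees $(p,q,a,b)$ in $(\eta,\bar\eta,\theta,\bar\theta)$, so that $\lambda_s G^{p,q,a,b} = s^{p+q+a+b} G^{p,q,a,b}$. Because $\Delta_h$ lowers the $\eta$- and $\bar\eta$-degrees simultaneously while $\zeta$ (restriction to $\eta=\bar\eta=0$) kills anything of positive $\eta$- or $\bar\eta$-degree, only the diagonal components $p=q$ can contribute to $\zeta(e^{s^{-2}\Delta_h}\lambda_s G)$, and each such component receives an extra factor $s^{-2p}$ from $\Delta_{s^2 h}^p = s^{-2p}\Delta_h^p$. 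Combining, the contribution of $G^{p,p,a,b}$ to $T_{s^2 h}(\lambda_s G)$ carries the overall factor
$$s^{2p+a+b}\cdot s^{-2p}\cdot s^{-2m} = s^{a+b-2m}.$$

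The last step is Berezin integration over $\Pi TM$: since $d\beta = dz\,d\bar z\,d\theta\,d\bar\theta$ and Berezin integration in $d\theta\,d\bar\theta$ selects the top-degree piece $\theta^1\cdots\theta^m\bar\theta^1\cdots\bar\theta^m$, only the components with $a=b=m$ survive, and for these $s^{a+b-2m}=1$. Consequently
$$\int \lambda_s(e^{-h}G)\,\mu \;=\; \sum_{p\geq 0}\int_{\Pi TM} T_h\bigl(G^{p,p,m,m}\bigr)\, d\beta,$$
which is manifestly independent of $s$. The one mild subtlety is checking that the multi-degree decomposition is compatible with the $\deg$-filtration on $\hat\P$ so that the fiberwise oscillatory integral is computed level by level; this follows from the finiteness of each $\deg$-level together with the compact support of $G$ over $M$, and presents no real obstacle.
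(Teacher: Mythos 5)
Your proof is correct and follows essentially the same route as the paper: express the formal integral via $T_{s^2h}(\lambda_s G)$, use $\det(\Lambda_{s^2h}) = s^{-2m}\det(\Lambda_h)$ and $\Delta_{s^2h}=s^{-2}\Delta_h$, decompose $G$ by fiberwise multidegree, and observe that the $s$-factors cancel once Berezin integration selects the odd bidegree $(m,m)$. If anything you are slightly more careful than the paper's own write-up, explicitly carrying the factor $s^{a+b}$ from the odd-variable rescaling that the paper absorbs implicitly into its remark that only the $(\theta,\bar\theta)$-bidegree $(m,m)$ component survives.
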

\begin{proof}
We have
\[
    \int \lambda_s (e^{- h}G) \, \mu = \int_{\Pi TM} T_{s^2 h}(\lambda_s G) d\beta.
\]
Since only the component of $G$ of bidegree $(m,m)$ with respect to the odd variables $(\theta,\bar\theta)$ contributes to the integral, it suffices to prove that
\[
      T_{s^2 h}(\lambda_s G) = s^{-2m} T_h(G).
\]
Clearly, $\det (\Lambda_{s^2h}) = s^{-2m} \det (\Lambda_h)$ and $\Delta_{s^2 h} = s^{-2}\Delta_h$. Denote by $G_{k,l}$ the component of $G$ of bidegree $(k,l)$ with respect to the variables~$(\eta,\bar\eta)$. Then
\[
     \zeta \left(e^{s^{-2}\Delta_h} \lambda_s(G) \right)= \sum_{r=0}^\infty \frac{1}{r!}(\Delta_h)^r G_{r,r}
\]
does not depend on $s$, whence the proposition follows.
\end{proof}
We introduce an operator
\[
          L_\sigma^s :=s^2 \lambda_s^{-1} L_\sigma\lambda_s
\]
and a function
\[
                 G(s,t) := \lambda_s^{-1}F(s^2 t),
\]
where $F(t)$ is the solution of (\ref{E:evol}) with the initial condition $F(0)=\unit$.
\begin{lemma}
The function $G(s,t)$ is the unique solution of the equation
\begin{equation}\label{E:tevol}
                     \frac{d}{dt}G =  L_\sigma^s G
\end{equation}
with the initial condition $G|_{t=0} = \unit$ on the space $\F$. 
\end{lemma}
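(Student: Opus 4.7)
The plan is a direct chain-rule computation followed by a transport of the uniqueness statement from Lemma~\ref{L:zero}. First I would differentiate the definition of $G(s,t)$: since $\lambda_s^{-1}$ commutes with $d/dt$ (it acts only on the fiber variables), one gets
\[
\frac{d}{dt}G(s,t) = \lambda_s^{-1}\frac{d}{dt}F(s^2 t) = s^2\,\lambda_s^{-1}\bigl(L_\sigma F\bigr)(s^2 t).
\]
Inserting $\lambda_s \lambda_s^{-1} = \mathrm{id}$ between $L_\sigma$ and $F(s^2 t)$ and using the definition $L_\sigma^s = s^2 \lambda_s^{-1} L_\sigma \lambda_s$ gives
\[
\frac{d}{dt}G(s,t) = s^2\,\lambda_s^{-1}L_\sigma \lambda_s\,\lambda_s^{-1}F(s^2 t) = L_\sigma^s\,G(s,t),
\]
which is (\ref{E:tevol}). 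The initial condition follows immediately from $F(0)=\unit$ and $\lambda_s^{-1}\unit = \unit$.

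For uniqueness I would reduce back to the original evolution equation by reversing the substitution. Suppose $\tilde G(s,t)$ is another solution of (\ref{E:tevol}) on $\F$ with $\tilde G(s,0)=\unit$, and set $\tilde F(t) := \lambda_s \tilde G(s,t/s^2)$. A chain-rule computation symmetric to the one above shows
\[
\frac{d}{dt}\tilde F(t) = \frac{1}{s^2}\lambda_s L_\sigma^s \tilde G(s,t/s^2) = L_\sigma\,\lambda_s \tilde G(s,t/s^2) = L_\sigma \tilde F(t),
\]
with $\tilde F(0) = \lambda_s\unit = \unit$. By the uniqueness part of Theorem~\ref{T:evo} (or equivalently Lemma~\ref{L:zero}), $\tilde F = F$, and then unwinding the substitution gives $\tilde G(s,t) = \lambda_s^{-1} F(s^2 t) = G(s,t)$.

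There is essentially no obstacle here: both steps are automatic once one notices that the map $F \mapsto \lambda_s^{-1} F(s^2 \,\cdot\,)$ is a bijection from solutions of $\frac{d}{dt}F = L_\sigma F$ to solutions of $\frac{d}{dt}G = L_\sigma^s G$ that preserves the initial condition $\unit$. The only thing worth a brief comment in the write-up is that $\lambda_s$ indeed acts well on $\F$ (it is defined by a simple rescaling of the fiber coordinates and commutes with the $\nu$-grading), so all the manipulations above stay inside the space $\F$ where Lemma~\ref{L:zero} applies.
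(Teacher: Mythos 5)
Your proof is correct and uses the same chain-rule calculation as the paper's (which simply writes $\lambda_s\bigl(\frac{d}{dt}G\bigr) = \frac{d}{dt}F(s^2t) = s^2 L_\sigma F(s^2t) = s^2 L_\sigma \lambda_s G$ and leaves the rest implicit). You are a bit more explicit than the paper in spelling out that the bijection $F \mapsto \lambda_s^{-1}F(s^2\cdot)$ transports uniqueness back to Lemma~\ref{L:zero}, but the underlying idea is identical.
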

\begin{proof}
The lemma follows from the calculation
\begin{eqnarray*}
   \lambda_s\left( \frac{d}{dt} G\right) = \frac{d}{dt}(\lambda_s G) = \frac{d}{dt}( F(s^2t) )=\\
 s^2 \frac{dF}{dt}(s^2t) = s^2 L_\sigma F(s^2t) = s^2 L_\sigma \lambda_s G.
\end{eqnarray*}
\end{proof}
Consider the grading on the functions on $TM \oplus \Pi TM$ polynomial on fibers given by the operator $\E+ \bar \E$. For the variables $\eta,\bar\eta,\theta,\bar\theta$,
\[
     |\eta|=|\bar\eta|=|\theta|=|\bar\theta|=1
\]
(we assume that $|\nu|=0$). This grading induces an {\it ascending} filtration on the space of formal differential operators on $\Q$. The subspace of filtration degree~$d$ consists of the operators of the form $A = A_d + A_{d-1} + \ldots$, where~$A_j$ is homogeneous of degree $j$ with respect to the this grading. We call this grading the $\lambda$-grading, because 
\[
\lambda_s (A_j)\lambda_s^{-1} = s^j A_j.
\]
It follows from (\ref{E:lbullf}) that for $f \in C^\infty(M)((\nu))$ the operator $f - L_f$ has the $\lambda$-filtration degree $-1$. We see from (\ref{E:oplsigma}) that the $\lambda$-filtration degree of the operator $L_\sigma$ is 2.
Denote by $L_\sigma^0$ the homogeneous component of~$L_\sigma$ of $\lambda$-degree 2. We have
\[
     L_\sigma^0 = \lim_{s \to 0} L_\sigma^s.
\] 
The curvature $R = R_k^u$ of the K\"ahler connection on $M$ is given by the formula
$R_k^u = R^u_{kp\bar q} dz^p \wedge d \bar z^q$, where
\[
R^u_{kp\bar q} = (g_{k p \bar b} g^{\bar ba} g_{a \bar l \bar q} - g_{kp\bar l \bar q})g^{\bar lu}.
\]
In local coordinates the operator $L_\sigma^0$ is expressed as follows,
\begin{equation}\label{E:lsigmazero}
                      L_\sigma^0 = \sigma + \hat R_k^u \frac{\p}{\p \eta^u},
\end{equation}
where $\hat R_k^u := R^u_{kp\bar q}\theta^p \bar \theta^q$.

The operator $L_\sigma^s$ can be written as a series $L_\sigma^s = B_0 + s B_1 + \ldots$, where the $\lambda$-degree of $B_i$ is $2-i$ and 
$B_0 = L_\sigma^0$.
\begin{theorem}\label{T:regular}
The function $G(s,t)$ is regular at $s=0$.
\end{theorem}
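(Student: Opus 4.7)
The strategy is to identify $G(s,t)$ with the unique formal-power-series-in-$s$ solution of the rescaled evolution equation (\ref{E:tevol}) and hence conclude that $G(s,t)$ has no negative powers of $s$ in its expansion.

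First, I would expand the rescaled operator. Since the $\lambda$-filtration degree of $L_\sigma$ is $2$, decomposing $L_\sigma = \sum_{j \leq 2} A_j$ into $\lambda$-homogeneous pieces yields
\[
L_\sigma^s \;=\; s^2 \lambda_s^{-1} L_\sigma \lambda_s \;=\; \sum_{j \leq 2} s^{2-j} A_j \;=\; B_0 + s B_1 + s^2 B_2 + \ldots,
\]
a formal power series in $s$ with non-negative powers and leading term $B_0 = A_2 = L_\sigma^0$. Both the rescaled equation (\ref{E:tevol}) and its initial condition $\unit$ are thus manifestly regular at $s=0$.

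Second, I would construct a formal-power-series candidate $\tilde G(s, t) = \sum_{n \geq 0} s^n G_n(t)$ in $\F[[s]]$ solving (\ref{E:tevol}) with $\tilde G(s, 0) = \unit$. Matching powers of $s$ gives the recursion
\[
\frac{d}{dt} G_n \;=\; B_0 G_n + \sum_{i=1}^n B_i G_{n-i}, \qquad G_n(0) = \delta_{n,0}\,\unit,
\]
and each $G_n(t) \in \F$ is uniquely determined by the degree-by-degree inductive integration used in the proof of Theorem~\ref{T:evo}. The key structural input is that the lowest $\deg$-component of $B_0 = L_\sigma^0$, extracted from (\ref{E:oplsigma}) as the $\lambda$-degree-$2$, $\nu$-degree-$(-1)$ contribution, is the multiplication operator $(B_0)^0 = \varphi - \psi = -S$, which allows the analog of the integration formula (\ref{E:ind}) to close the induction.

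Third, I would identify $G(s,t)$ with $\tilde G(s,t)$ by uniqueness. For each $s \neq 0$, setting $F'(t) := \lambda_s\,\tilde G(s, s^{-2}t)$ yields a solution of the original equation $\frac{d}{dt} F' = L_\sigma F'$ with $F'(0) = \unit$; Lemma~\ref{L:zero} then gives $F' = F(t)$, whence $\tilde G(s, t) = \lambda_s^{-1} F(s^2 t) = G(s, t)$. Therefore $G(s,t)$ agrees with the formal power series $\tilde G(s,t)$, proving the claimed regularity at $s=0$.

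The main obstacle is executing step two --- extending the inductive integration of Theorem~\ref{T:evo} to the principal operator $B_0$. The operator $B_0$ still preserves the standard filtration, and since its bottom $\deg$-piece is multiplication by $-S$, the integration of each $G_n$ admits an explicit formula analogous to (\ref{E:ind}), yielding the existence and uniqueness of each $G_n(t)$ and completing the argument.
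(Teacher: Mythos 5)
Your central insight is the same as the paper's: because the $\lambda$-filtration degree of $L_\sigma$ is $2$, the rescaled operator $L_\sigma^s = s^2\lambda_s^{-1}L_\sigma\lambda_s$ expands in non-negative powers of $s$ with constant term $L_\sigma^0$, so the rescaled evolution equation is manifestly regular at $s=0$. But the paper does \emph{not} construct an independent $\F[[s]]$-solution and compare: it applies $\lambda_s^{-1}$ and the substitution $t\mapsto s^2 t$, $\tau = s^2 u$ directly to the explicit integral formula (\ref{E:ind}), rewritten as (\ref{E:induct}), and reads off that every factor in the transformed formula is regular at $s=0$. Each $\deg$-component $G^l$ is thus shown to be regular by induction on $l$, with no auxiliary object to match against.

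Your step three has a genuine gap. You set $F'(t) := \lambda_s\,\tilde G(s, s^{-2}t)$ for a fixed $s \ne 0$ and appeal to Lemma~\ref{L:zero}, but $\tilde G(s,t) = \sum_{n \ge 0} s^n G_n(t)$ is \emph{a priori} only a formal power series in $s$ with values in $\F$: to evaluate it at a nonzero number $s$ you need to know that for each fixed $\deg$-degree, $\nu$-power, and fiberwise polynomial degree, only finitely many $n$ contribute. Nothing in your construction establishes this, and it is exactly the kind of fact that the paper's explicit transformation of (\ref{E:ind}) packages for free (for each pair $(l,r)$ the resulting expression in $s$ is a finite sum of terms $e^{-ks^2t}(s^2t)^{l'}s^{-(l-2r)}\cdot P$). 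You could salvage the route by arguing instead entirely inside $\F[[s]]$: show that $G(s,t)$ itself admits a Laurent expansion in $s$ with bounded pole order (for each $(l,r)$ component), that its Laurent coefficients obey the same recursion as your $G_n$, and that the negative-index coefficients vanish by an analogue of Lemma~\ref{L:zero} for the generator $B_0$ — but this would have to be written out.

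One smaller imprecision: you invoke ``the analog of the integration formula (\ref{E:ind})'' with $(B_0)^0 = -S$ as the key structural input, which is correctly computed. But (\ref{E:ind}) rests on $A^0 = e^{-S}(-\E)e^S$, i.e.\ having $\E$ in the bottom $\deg$-component, and inverting via $\E^{-1}$ on $\J_r$. In the rescaled picture the bottom $\deg$-piece is multiplication by $-S$ alone, with no $\E$; the integrating factor becomes $e^{tS}$ rather than $e^{t(\E + \E S)}$, so the analogy is only loose. This is not an error, but you should spell out the correct integration scheme rather than asserting it by analogy.
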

\begin{proof}
We will prove by induction on $i$ that the $\deg$-homogeneous component $G^i$ of degree $i$ of the function $G$ is regular at $s=0$. We have $G^i = \lambda_s^{-1} F^i(s^2 t)$, where $F^i$ is the $\deg$-homogeneous component of degree $i$ of the function $F(t)$. We get from Theorem \ref{T:evo} that
\[
     G^0(s,t) = \exp\left\{\frac{e^{-s^2 t} - 1}{s^2}S\right\}.
\]
This function is regular at $s=0$ and
\[
     G^0(0,t) = e^{-tS}.
\]
Assume that $G^i$ is regular at $s=0$ for $i < l$. In Theorem \ref{T:evo} we used the notation $L_{\nu\sigma} = A^2 + A^3 + \ldots$, where $\deg A^i = i$. Using that
\[
    e^{-S} e^{-t\E} e^S = \exp\left\{e^{-S}(-t\E)e^{S}\right\} = e^{-t(\E + \E S)},
\]
we can rewrite formula (\ref{E:ind})  as follows:
\begin{equation}\label{E:induct}
    F^l(t) =\frac{1}{\nu} e^{-t(\E + \E S)} \int_0^t e^{\tau(\E + \E S)} \sum_{i = 1}^l A^{i+2} F^{l-i}(\tau) d\tau.
\end{equation}
Applying $\lambda_s^{-1}$ to both sides of (\ref{E:induct}), we get
\[
    \lambda_s^{-1} F^l(t) =\frac{1}{\nu} e^{-t(\E + s^{-2}\E S)} \int_0^t e^{\tau(\E + s^{-2}\E S)} \sum_{i = 1}^l ( \lambda_s^{-1}A^{i+2} \lambda_s) \lambda_s^{-1} F^{l-i}(\tau) d\tau.
\]
Replacing $t$ with $s^2 t$ and using the substitution $\tau = s^2 u$, we obtain that
\[
    G^l(s,t) = \frac{1}{\nu} e^{-t(s^2\E + \E S)} \int_0^t e^{\tau(s^2\E +\E S)} \sum_{i = 1}^l ( s^2\lambda_s^{-1}A^{i+2} \lambda_s)G^{l-i}(s,u) du.
\]
Since the $\lambda$-filtration degree of the operator $L_{\nu\sigma}$ is 2, the operator $s^2\lambda_s^{-1}A^{i+2} \lambda_s$ is regular at $s=0$. Therefore, by the induction assumption, $G^l$ is also regular at $s=0$, whence the theorem follows.
\end{proof}

The matrix
\[
      H (t) := \frac{e^{t \hat R} - 1}{\hat R}
\]
is well-defined and the elements of the matrix $H(t) - t \cdot 1$ are even and nilpotent.
Theorem \ref{T:regular} implies that the function $G(0,t)$ is a solution of the equation
\[
      \frac{d}{dt}G(0,t) = L_\sigma^0 G(0,t)
\]
with the initial condition $G(0,0)=\unit$. It is easy to prove that this solution is unique. Denote by $\tilde h$ the global function on $TM \oplus \Pi TM$ given in local coordinates by the formula
$\tilde h =  -\nu^{-1}H_k^u(t) g_{u\bar l}\eta^k \bar \eta^l$. A direct check shows that
\[
       G(0,t) = \exp\left\{it \hat\omega  - \tilde h  \right\},
\]
where $\hat\omega$ is given by (\ref{E:global}). It follows that
\[
       G(0,t) = e^{t\varphi} \tilde G(t),
\]
where $\tilde G(t) \in \hat\P$, so that $G(0,t) \in \O$ for $t \neq  0$. By (\ref{E:lambdah}),
\[
     \Lambda_{\tilde h} = - (H(t))^{-1} =  \frac{\hat R}{1 - e^{t \hat R}}.
\]
Using Lemma \ref{L:rewr}, we can calculate the following formal oscillatory integral,
\begin{eqnarray*}
    \int G(0,t)\,  \frac{1}{m!} \left( \frac{i}{2 \pi} \gamma\right)^m =
e^{ it \hat\omega} \det \left(\frac{\hat R}{1 - e^{t \hat R}}\right).
\end{eqnarray*}

Now assume that the manifold $M$ is compact and connected. According to Theorem \ref{T:fint} and Proposition \ref{P:indep},
\[
     \tau(\unit) = \int G(s,t) \, \mu
\]
for any $s \neq 0$ and $t > 0$. Passing to the limit as $s \to 0$, we obtain that
\[
     \tau(\unit) = \int G(0,t) \, \mu = \int_{\Pi TM} e^{it \hat\omega} \det \left(\frac{\hat R}{1 - e^{t \hat R}}\right) \, d\beta
\]
for any $t \neq 0$. We will set $t = -1$. Theorem \ref{T:constmult} implies that there exists a constant $c \in \C$ such that
\[
     c \int_M \mu_\star = \int_{\Pi TM} e^{-i\hat\omega} \det \left(\frac{\hat R}{1 - e^{-\hat R}}\right) \, d\beta,
\]
where $[\omega]$ is the de Rham class of $\omega$. Since the leading term of the canonical trace density $\mu_\star$ is given by (\ref{E:lead}), we see that $c=1$. 

Using the fact that the Todd genus ${Td}(M)$ of~$M$ has a de Rham representative
\[
     \det \frac{R}{1 - e^{-R}},
\]
where $R$ is the curvature of the K\"ahler connection on $M$, we obtain the following algebraic Riemann-Roch-Hirzebruch theorem for deformation quantization with separation of variables.
\begin{theorem}
Let $\star$ be a star product with separation of variables on a compact connected pseudo-K\"ahler manifold $M$ with classifying form~$\omega$. Then
\begin{equation}\label{E:hrr}
    \int_M \mu_\star = \int_M e^{-i [\omega]} \, {Td} (M),
\end{equation}
where $\mu_\star$ is the canonical trace density of $\star$.
\end{theorem}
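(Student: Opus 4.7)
The plan is to exploit the machinery already set up: combining Theorem~\ref{T:fint} (time-independence of $\int F(t)\mu$) with Proposition~\ref{P:indep} (the $\lambda_s$-invariance of the formal integral) to express $\tau(\unit)$ through the rescaled heat function $G(s,t)=\lambda_s^{-1}F(s^2t)$, and then to take the limit $s\to 0$ where an explicit evaluation is possible.

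First, I would observe that for any $s\neq 0$ and $t>0$,
\[
\tau(\unit) \;=\; \int F(s^2t)\,\mu \;=\; \int \lambda_s\bigl(\lambda_s^{-1}F(s^2t)\bigr)\,\mu \;=\; \int G(s,t)\,\mu,
\]
using Theorem~\ref{T:fint} and Proposition~\ref{P:indep}. By Theorem~\ref{T:regular} the integrand is regular at $s=0$, so $\tau(\unit)=\int G(0,t)\,\mu$. Next I would solve the limiting evolution equation $\tfrac{d}{dt}G(0,t)=L_\sigma^0\,G(0,t)$ with $G(0,0)=\unit$; using the explicit form (\ref{E:lsigmazero}) and the ansatz $G(0,t)=\exp\{it\hat\omega-\tilde h(t)\}$ with $\tilde h=-\nu^{-1}H^u_k(t)g_{u\bar l}\eta^k\bar\eta^l$ and $H(t)=(e^{t\hat R}-1)/\hat R$, a direct verification (using that $\hat\omega$ and $\hat R$ supercommute with the relevant pieces and that $\sigma=\varphi+i\hat\omega$) shows this is the unique solution. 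Then Lemma~\ref{L:rewr} and the defining formula (\ref{E:formoscillint}) for the fiberwise formal oscillatory integral yield
\[
\int G(0,t)\,\frac{1}{m!}\Bigl(\frac{i}{2\pi}\gamma\Bigr)^m
\;=\; e^{it\hat\omega}\,\det\!\Bigl(\frac{\hat R}{1-e^{t\hat R}}\Bigr),
\]
since $\Lambda_{\tilde h}=\hat R/(1-e^{t\hat R})$.

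Setting $t=-1$ and integrating over $\Pi TM$ gives $\tau(\unit)=\int_{\Pi TM}e^{-i\hat\omega}\det\bigl(\hat R/(1-e^{-\hat R})\bigr)\,d\beta$. On the other hand, Theorem~\ref{T:constmult} (with $f=\unit$) gives $\tau(\unit)=c\int_M\mu_\star$ for a constant $c\in\C$. To identify $c$, I would extract the leading $\nu$-behavior on both sides: the normalization (\ref{E:lead}) forces the leading term of $\int_M\mu_\star$ to be $\tfrac{1}{m!}\int_M(-i\omega_{-1}/\nu)^m$, while on the Berezin side only the leading ($R\to 0$, $\hat\omega\to\hat\omega_{-1}/\nu$) terms of the Todd-type density contribute at this order, and reading off the coefficient via the footnote identification of $\Pi TM$-Berezin integrals with de Rham integrals on $M$ yields exactly the same quantity. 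Hence $c=1$.

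Finally I would convert the Berezin integral on $\Pi TM$ into a de Rham integral on $M$: under the correspondence $\theta^k\bar\theta^l \leftrightarrow dz^k\wedge d\bar z^l$ described in the footnote, $\hat\omega$ represents the classifying form $\omega$ and $\hat R^u_k=R^u_{kp\bar q}\theta^p\bar\theta^q$ represents the curvature matrix of the K\"ahler connection, so the integrand becomes the standard de Rham representative of $e^{-i[\omega]}\,\mathrm{Td}(M)$, giving (\ref{E:hrr}). The main obstacle is the normalization check that pins down $c=1$: one must carefully match the canonical leading term (\ref{E:lead}) of $\mu_\star$ against the leading coefficient of the explicit Todd-density evaluation, including the factor $(i/2\pi)^m$ built into $\gamma$ and the sign conventions in the definition of $\hat\omega$; the rest of the argument is a matter of assembling pieces that are already in place.
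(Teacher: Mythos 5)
Your proposal is correct and follows essentially the same route as the paper: time-independence of the heat integral (Theorem~\ref{T:fint}) combined with $\lambda_s$-invariance (Proposition~\ref{P:indep}) and regularity at $s=0$ (Theorem~\ref{T:regular}), explicit solution of the limiting equation, evaluation of the oscillatory integral via Lemma~\ref{L:rewr}, specialization at $t=-1$, and identification of the constant $c=1$ from the normalization~(\ref{E:lead}). No substantive differences.
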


The curvature of the K\"ahler connection on $T_{\C}M = T^{(1,0)}M \oplus T^{(0,1)}M \cong T^{(1,0)}M \oplus T^{*(1,0)}M$ is given by the matrix
\[
\begin{bmatrix}
R & 0 \\
0 & -R^t
\end{bmatrix},
\]
where we identify $T^{(0,1)}M$ and $T^{*(1,0)}M$ via the pseudo-K\" ahler metric on $M$. Therefore, the $\hat A$-genus of $M$ has a de Rham representative
\[
    \det \frac{R/2}{\sinh(R/2)} = e^{-\frac{1}{2} \tr(R)} \det \frac{R}{1 - e^{-R}}.
\]
Now the index formula (\ref{E:index}) follows from formulas (\ref{E:hrr}) and (\ref{E:class1}) and the fact that $\rho = i \tr R$.

\end{document}